\numberwithin{equation}{section}
\newcommand{\be}{\begin{eqnarray}}
\newcommand{\ee}{\end{eqnarray}}
\newcommand{\ce}{\begin{eqnarray*}}
\newcommand{\de}{\end{eqnarray*}}
\newtheorem{theorem}{Theorem}[section]
\newtheorem{lemma}[theorem]{Lemma}
\newtheorem{remark}[theorem]{Remark}
\newtheorem{definition}[theorem]{Definition}
\newtheorem{proposition}[theorem]{Proposition}
\newtheorem{problem}[theorem]{Problem}
\newtheorem{Examples}[theorem]{Example}
\newtheorem{corollary}[theorem]{Corollary}
\newtheorem{assumption}[theorem]{Assumption}
\newenvironment{proof of theorem 1.2}{{\it Proof of Theorem 1.2}.}{{\hfill 	
$\square$\hskip - \parfillskip}}
\newenvironment{proof of theorem 1.7}{{\it Proof of Theorem 1.7}.}{{\hfill 	
		$\square$\hskip - \parfillskip}}
\newenvironment{proof of theorem 1.3}{{\it Proof of Theorem 1.3}.}{{\hfill 	
		$\square$\hskip - \parfillskip}}
\newenvironment{proof of theorem 1.5}{{\it Proof of Theorem 1.5}.}{{\hfill 	
		$\square$\hskip - \parfillskip}}
\newenvironment{proof of theorem 1.4}{{\it Proof of Theorem 1.4}.}{{\hfill 	
		$\square$\hskip - \parfillskip}}
\newenvironment{proof of theorem 1.6}{{\it Proof of Theorem 1.6}.}{{\hfill 	
		$\square$\hskip - \parfillskip}}
\newenvironment{proof of theorem 5.3}{{\it Proof of Theorem 5.3}.}{{\hfill 	
		$\square$\hskip - \parfillskip}}
\newenvironment{proof of (1.3)}{{\it Proof of (1.3)}.}{{\hfill 	
		$\square$\hskip - \parfillskip}}
\newenvironment{proof of corollary 1.8}{{\it Proof of Corollary 1.8}.}{{\hfill 	
			$\square$\hskip - \parfillskip}}
\newenvironment{proof of corollary 1.6 and 1.7}{{\it Proof of Corollary 1.6 and 1.7}.}{{\hfill 	
		$\square$\hskip - \parfillskip}}
\newcommand{\rmnum}[1]{\romannumeral #1}
\newcommand{\Rmnum}[1]{\expandafter\@slowromancap\romannumeral #1@}
\def\eps{\varepsilon}
\def\a{\alpha}
\def\om{\omega}
\def\Om{\Omega}
\def\p{\partial}
\def\l{\lambda}
\def\la{\langle}
\def\ra{\rangle}
\def\[{{\Big[}}
\def\]{{\Big]}}
\def\<{{\langle}}
\def\>{{\rangle}}
\def\({{\Big(}}
\def\){{\Big)}}
\def\bx{{\mathbf{x}}}
\def\min{{\mathord{{\rm min}}}}
\def\={&\!\!=\!\!&}
\def\cK{{\mathcal K}}
\def\cL{{\mathcal L}}
\def\cS{{\mathcal S}}
\def\mR{{\mathbb R}}
\def\mS{{\mathbb S}}
\def\1{{\mathbf{1}}}
\def\sA{{\mathscr A}}
\def\geq{\geqslant}
\def\leq{\leqslant}
\def\ge{\geqslant}
\def\le{\leqslant}
\def\k{\kappa}
\def\eps{\varepsilon}
\def\a{\alpha}
\def\om{\omega}
\def\Om{\Omega}
\def\p{\partial}
\def\l{\lambda}
\def\la{\langle}
\def\ra{\rangle}
\def\[{{\Big[}}
\def\]{{\Big]}}
\def\<{{\langle}}
\def\>{{\rangle}}
\def\({{\Big(}}
\def\){{\Big)}}
\def\bx{{\mathbf{x}}}
\def\min{{\mathord{{\rm min}}}}
\def\={&\!\!=\!\!&}
\def\bt{\begin{theorem}}
\def\et{\end{theorem}}
\def\bl{\begin{lemma}}
\def\el{\end{lemma}}
\def\br{\begin{remark}}
\def\er{\end{remark}}
\def\bx{\begin{Examples}}
\def\ex{\end{Examples}}
\def\bd{\begin{definition}}
\def\ed{\end{definition}}
\def\bp{\begin{proposition}}
\def\ep{\end{proposition}}
\def\bc{\begin{corollary}}
\def\ec{\end{corollary}}
\def\geq{\geqslant}
\def\leq{\leqslant}
\def\ge{\geqslant}
\def\le{\leqslant}
 \def\nn{\nabla}
\def\<{\langle} \def\>{\rangle}
\def\bpf{\begin{proof}}
\def\epf{\end{proof}}
\begin{document}
	
\title{anisotropic flows without global forcing terms and dual Orlicz Christoffel-Minkowski type problem}\thanks{\it {This research was partially supported by NSFC (Nos. 11871053 and 12261105).}}
\author{Shanwei Ding*$^1$ and Guanghan Li$^1$}


\thanks{{\it 2020 Mathematics Subject Classification: 53E99, 35K55.}}
\thanks{{\it Keywords: anisotropic expanding flow, curvature function, asymptotic behaviors, Orlicz Christoffel-Minkowski type problem}}

\thanks{{\it *Corresponding author. E-mail: dingsw@whu.edu.cn}}

\thanks{{\it Guanghan Li: ghli@whu.edu.cn}}

\thanks{\it $^1$ School of Mathematics and Statistics, Wuhan University, Wuhan 430072, China.
}

\begin{abstract}
In this paper, we study the long-time existence and asymptotic behavior for a class of anisotropic non-homogeneous curvature flows without global forcing terms. By the stationary solutions of such anisotropic flows, we obtain existence results for a class of dual Orlicz Christoffel-Minkowski type problems, which is equivalent to solve the PDE $G(x,u_K,Du_K)F(D^2u_K+u_K\Rmnum{1})=1$ on $\mS^n$ for a convex body $K$, where $D$ is the covariant derivative with respect to the standard metric on $\mS^n$ and $\Rmnum{1}$ is the unit matrix of order $n$. This result covers many previous known solutions to $L^p$ dual Minkowski problem, $L^p$ dual  Christoffel-Minkowski problem, and some dual Orlicz Minkowski problem etc.. Meanwhile, the variational formula of some modified quermassintegrals and the corresponding prescribed area measure problem (Orlicz Christoffel-Minkowski type problem) are considered, and inequalities involving modified quermassintegrals are also derived.  As corollary, this gives a partial answer about the general prescribed curvature problem raised in \cite{GRW}.

\end{abstract}

\maketitle
\setcounter{tocdepth}{2}
\tableofcontents

\section{Introduction}

Anisotropic inverse curvature flows of strictly convex hypersurfaces with speed depending on their curvatures, support function and radial function have been considered recently, cf. \cite{IM,DL3,BIS,CL} etc.. These flows usually provide alternative proofs and smooth category approach of the existence of solutions to elliptic PDEs arising in convex body geometry. However, the literature on non-homogeneous anisotropic inverse curvature  flows is not very rich and there are few works in this direction, cf. \cite{JLL,BIS2,BIS3}. One advantage of this method is that there is no need to employ the constant rank theorem. For example, if $F=\sigma_{k}^{\frac{1}{k}}(\l)$, by the lower bound of $F^{-1}=\sigma_{k}^{-\frac{1}{k}}(\l)=(\frac{\sigma_{n}}{\sigma_{n-k}})^\frac{1}{k}(\k)$ one has that the hypersurfaces naturally preserve convexity. Whether these flows can be extended is an interesting problem. However, if $F\ne \sigma_{n}^\frac{1}{n}(\l)$, there is no suitable monotone integral quantities to be used to prove the convergence of flows if speed depends on  $X$. More details can be seen in our previous work \cite{DL3}, and inspired  by that paper, we consider a large class of anisotropic flows without global forcing terms.

 Let $M_0$ be a closed, smooth and strictly convex hypersurface in $\mathbb{R}^{n+1}$ ($n\geq2$), and $M_0$ encloses the origin. In this paper, we study the following inverse curvature flow
   \begin{equation}
 	\label{1.2}
 	\begin{cases}
 		&\frac{\partial X}{\partial t}(x,t)=(G(\nu,X)F^{\beta}(\l_i)-1)u\nu,\\
 		&X(\cdot,0)=X_0,
 	\end{cases}
 \end{equation}
where $F$ is a suitable curvature function of the hypersurface $M_t$ parameterized by a smooth embedding $X(x,t): \mS^n\times[0,T^*)\to \mR^{n+1}$, $\l=(\l_1,\cdots,\l_n)$ are the principal curvature radii of the  hypersurface $M_t$, $\beta>0$, $G:\mS^n\times\mR^{n+1}\rightarrow(0,+\infty)$ is a smooth function if $\langle\nu,X\rangle>0
$, $u$ is the support function and $\nu(\cdot,t)$ is the outer unit normal vector field to $M_t$.

Note that the radial function $\rho$ and support function $u$ are respectively given by $\rho=\sqrt{\langle X,X\rangle}$, $u=\langle X,\nu\rangle$, then $X=Du+u\nu$, and thus $G(\nu,X)$ can be regarded as $G(\nu,u,Du)$, where $\langle\cdot,\cdot\rangle$ is the standard inner product in $\mR^{n+1}$,  $D$ is the covariant derivative with respect to an orthonormal frame on $\mS^n$. We also denote by $\bar\nn$ the covariant derivative with respect to the metric in Euclidean space, and then $X=\bar\nn u$. Flow (\ref{1.2}) can be rewritten as the following flow
  \begin{equation*}
	\begin{cases}
		&\frac{\partial X}{\partial t}(x,t)=(G(\nu,u,Du)F^{\beta}(\l_i)-1)u\nu,\\
		&X(\cdot,0)=X_0.
	\end{cases}
\end{equation*}
We will use $G=G(\nu,X)$ to make the conditions easier to understand in our theorems, and use $G=G(\nu,u,Du)$ in the proof to emphasize the dependence of the equation on independent variables  in the rest of the paper.

Flow (\ref{1.2}) is inspired by our previous work \cite{DL3}. However, flow (\ref{1.2}) is more complicated than the flow in \cite{DL3}, since it involves a nonlinear function $G$. Note that $G$ is a function of $X$ other than $\vert X\vert$, which needs more effort to deal with. To explain this flow above, we first introduce the following flow and its scaling process, i.e. we let $G=\psi(\nu)u^{\alpha-1}\rho^\delta$ and consider the expanding flow
 \begin{equation}
 	\label{1.3}
 	\begin{cases}
 		&\frac{\partial X}{\partial t}(x,t)=\psi u^\alpha\rho^\delta F^{\beta}(x,t) \nu,\\
 		&X(\cdot,0)=X_0.
 	\end{cases}
 \end{equation}
If $\a+\beta+\delta<1$, let $\widetilde X(\cdot,\tau)=\varphi^{-1}(t)X(\cdot,t)$, where
\begin{equation}\label{1.4}
	\tau=\frac{\log((1-\alpha-\delta-\beta)t+C_0)-\log C_0}{1-\alpha-\delta-\beta},
\end{equation}
\begin{equation}\label{1.5}
\varphi(t)=(C_0+(1-\beta-\delta-\alpha)t)^{\frac{1}{1-\beta-\delta-\alpha}}.
\end{equation}
We can set $C_0$ sufficiently large to make
\begin{equation}\label{1.6}
\psi\widetilde u^{\alpha-1}\widetilde\rho^\delta\widetilde F^{\beta}\vert_{M_0}=C_0\psi u^{\alpha-1}\rho^\delta F^{\beta}\vert_{M_0}>1.
\end{equation}
Obviously, $\frac{\p\tau}{\p t}=\varphi^{\a+\delta+\beta-1}$ and $\tau(0)=0$. Then $\widetilde X(\cdot,\tau)$ satisfies the following normalized flow,
\begin{equation}\label{1.7}
	\begin{cases}
		\frac{\partial \widetilde X}{\p \tau}(x,\tau)=\psi\widetilde u^\alpha\widetilde\rho^\delta\widetilde F^{\beta}(\widetilde\l)\nu-\widetilde X,\\[3pt]
		\widetilde X(\cdot,0)=\widetilde X_0.
	\end{cases}
\end{equation}
For convenience we still use $t$ instead of $\tau$ to denote the new time variable, and omit the ``tilde'' if no confusions arise and we mention the scaled flow or normalized flow. We can find that the flow (\ref{1.7}) is equivalent (up to an isomorphism) to
\begin{equation}\label{1.8}
	\begin{cases}
		&\frac{\partial X}{\partial t}=(\psi u^\alpha\rho^\delta F^{\beta}(\l)-u) \nu,\\
		&X(\cdot,0)=X_0.
	\end{cases}
\end{equation}
This is why we consider flow (\ref{1.2}).

We can obtain convergence results for a large class of speeds and therefore make the following assumption.
\begin{assumption}\label{a1.1}
	Let $\Gamma_+=\{(\l_i)\in\mathbb{R}^n:\l_i>0\}$ be a symmetric, convex, open cone,
and suppose that $F$ is positive in $\Gamma_+$, homogeneous of degree $1$, and inverse concave with
\begin{equation*}
	\frac{\p F}{\p\l_i}>0,\quad F_*\vert_{\p\Gamma_+}=0,\quad F^{\beta}(1,\cdots,1)=1.
\end{equation*}
\end{assumption}
We remark that the function $F$ is inverse concave means the dual function $F_*$ defined by $F_*(\k_1,\cdots,\k_n)=\frac{1}{F(1/\k_1,\cdots,1/\k_n)}$ is concave on $\Gamma_+$.

We also note that $F(\l_1,\cdots,\l_n)$ can be regarded as $F([a_{ij}])$, where $\l_1,\cdots,\l_n$ are the eigenvalues of matrix $[a_{ij}]$. It is not difficult to see that the eigenvalues of matrix $[F^{ij}]=[\frac{\p F}{\p a_{ij}}]$ are $\frac{\p F}{\p\l_1},\cdots,\frac{\p F}{\p\l_n}$.

We first prove the long time existence and convergence of the above flows.
\begin{theorem}\label{t1.2}
Let $F\in C^2(\Gamma_+)\cap C^0(\p\Gamma_+)$ satisfy Assumption \ref{a1.1}, and let $M_0$ be a closed, smooth, uniformly convex hypersurface in $\mathbb{R}^{n+1}$, $n\ge2$, enclosing the origin. Suppose

$(i)$ $\lim\sup_{s\rightarrow+\infty}[\max_{y=sx}G(x,y)s^\beta]<1<\lim\inf_{s\rightarrow0^+}[\min_{y=sx}G(x,y)s^\beta]$,

$(ii)$ $(D_{x_i}D_{x_j}(Gu)^\frac{1}{\beta+1}+(Gu)^\frac{1}{\beta+1}\delta_{ij})>0$, where $x_i$ is the
orthonormal frame on $\mS^n$ and $D_{x_i}G$ is the covariant derivative of $G(\nu,X)$ with respect to the first independent variable for fixed $X$.

\noindent Then flow (\ref{1.2}) with $\beta>0$ has a unique smooth strictly convex solution $M_t$ for all time $t>0$.

 In addition, if

$(iii)$  we choose the initial hypersurface $M_0$ satisfying
$G(\nu,X)F^\beta\vert_{M_0}\ge1$ or $G(\nu,X)F^\beta\vert_{M_0}\le1$.

\noindent Then a subsequence of $M_t$ converges in $C^\infty$-topology to a positive, smooth, uniformly convex solution to $G(\nu,X)F^\beta=1$ along flow (\ref{1.2}) with $\beta>0$.

In particular, if $F=\sigma_{n}(\l)^\frac{1}{n}$, the condition (ii) can be dropped.
\end{theorem}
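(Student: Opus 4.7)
The plan is to parameterize flow (\ref{1.2}) by the Gauss map, reducing it to the fully nonlinear parabolic equation
\begin{equation*}
\partial_t u = (Gu)\,F^\beta\bigl([u_{ij}+u\delta_{ij}]\bigr) - u
\end{equation*}
for the support function $u(x,t)$ on $\mS^n$, where $G=G(x,u,Du)$. I would then establish uniform a priori estimates from $C^0$ through $C^2$, apply Evans--Krylov for $C^{2,\alpha}$ regularity, and iterate Schauder theory for higher-order bounds. Long-time existence follows, after which a monotonicity argument under (iii) identifies a subsequential limit as a stationary solution.

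For the $C^0$ estimate I would apply the maximum principle at spatial extrema. At a point where $u$ attains its spatial maximum, $Du=0$ (so $X=ux$) and $[u_{ij}]\le 0$, whence the curvature radii satisfy $\lambda_i\le u$ and $F^\beta(\lambda)\le u^\beta$ by Assumption \ref{a1.1}. Hence $\partial_t u_{\max}\le\bigl(G(x,u_{\max}x)u_{\max}^\beta-1\bigr)u_{\max}$, and the upper half of condition (i) forces $u_{\max}$ to stay bounded above. The symmetric argument at the minimum uses the lower half of (i), yielding $1/C\le u\le C$. Because the hypersurface then lies in the annulus $B_{1/C}\subset M_t\subset B_C$, the bound $|Du|^2=\rho^2-u^2\le C$ follows automatically and provides the $C^1$ estimate.

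The main obstacle is the $C^2$ estimate, which requires two-sided bounds on the curvature radii $\lambda_i$. The lower bound on $\lambda_i$ (preservation of strict convexity) is where condition (ii) enters: rather than invoking the constant rank theorem, I would test an auxiliary function of the form $W=\log\lambda_{\max}-A\log u+B|X|^2/2$ (with the usual off-diagonal perturbation to handle multiple eigenvalues) against the evolution equation. Condition (ii), asserting that $(Gu)^{1/(\beta+1)}$ satisfies the Christoffel--Minkowski compatibility $D^2\varphi+\varphi I>0$, supplies exactly the sign needed on the inhomogeneous Hessian terms that appear when $\log F$ is differentiated twice in space, while the inverse concavity in Assumption \ref{a1.1} absorbs the bad third-order terms. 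For the upper bound on $\lambda_i$, I would run the dual maximum-principle argument on $-\log\lambda_{\min}$ using the same ingredients together with the $C^0$ control of $GF^\beta$ along the flow.

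Once $\lambda_i$ is pinched between two positive constants, $F^\beta$ is uniformly elliptic and concave in the inverse variables, so Evans--Krylov yields a uniform $C^{2,\alpha}_{x,t}$ estimate and Schauder bootstrapping gives $C^{k,\alpha}$ bounds for all $k$, proving the long-time existence assertion. For the second part, under (iii) the sign of $GF^\beta-1$ is preserved by a direct maximum-principle argument on the evolution of $GF^\beta$, so $u(\cdot,t)$ is monotone and bounded; combined with the uniform smooth bounds, a subsequence converges in $C^\infty$ to a smooth, strictly convex $u_\infty$ with $\partial_t u_\infty\equiv 0$, i.e.\ a solution of $GF^\beta=1$. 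Finally, when $F=\sigma_n^{1/n}$ the stationary equation is of Monge--Ampère type, and the lower bound on $\lambda_i$ no longer requires condition (ii): the product $\prod_i\lambda_i=\det[u_{ij}+u\delta_{ij}]$ is directly controlled by $G$ and $u$ via the equation, and combined with individual upper bounds on the $\lambda_i$ this forces positive lower bounds on each $\lambda_i$, so (ii) becomes unnecessary.
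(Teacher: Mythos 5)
Your architecture coincides with the paper's: reduction to the scalar flow for the support function, the $C^0$ bound from condition (i) at spatial extrema, $C^1$ from $\rho^2=u^2+|Du|^2$, an eigenvalue maximum principle for the second-order bound in which condition (ii) fixes the sign of the zeroth-order terms, Krylov/Schauder bootstrapping, and preservation of the sign of $GF^\beta-1$ under (iii) giving monotone, hence subsequential smooth, convergence to a stationary solution. However, there is a genuine missing layer between your $C^1$ and $C^2$ steps: the eigenvalue argument cannot be started without two-sided bounds $1/C\le F(D^2u+u\Rmnum{1})\le C$ on the speed, and these do not follow from the $C^0$ estimate or from what you call ``the $C^0$ control of $GF^\beta$ along the flow.'' In the paper they are two separate maximum-principle lemmas (applied to $\log(uGF^\beta)-A\rho^2/2$ and to $GF^\beta/(1-\varepsilon\rho^2/2)$), and the upper bound on $F$ is the first of two places where inverse concavity is used, via $F^{ij}h_{mi}h_{mj}\ge F^2$; moreover the final upper bound on the $\lambda_i$ is not obtained by a second eigenvalue test at all, but deduced from $F\le C$, $\lambda_i\ge 1/C$ and the structural hypothesis $F_*|_{\p\Gamma_+}=0$.

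Relatedly, your eigenvalue bookkeeping is inverted: preserving convexity means bounding the largest principal curvature, i.e.\ the largest eigenvalue of the inverse matrix $h^{ij}$, from above (the paper tests $h^{11}/u$), which is $-\log\lambda_{\min}$ in your notation, not $\log\lambda_{\max}$; and a bound on $-\log\lambda_{\min}$ gives a lower, not an upper, bound on the radii. In the Gauss-curvature remark the idea that one of the two bounds comes free from the determinant is correct, but the result of Chen--Li that the paper invokes proves the lower bound on $D^2u+u\Rmnum{1}$ directly, with the upper bound then following from $\det\le C$; your proposed order (a direct upper bound on $\lambda_{\max}$ first) is the harder direction and is left unjustified. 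None of this changes the overall strategy, which is the paper's, but the speed bounds must be supplied before the argument closes.
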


\textbf{Remark:}
(1) Condition (i), a weaker condition than that in \cite{LL,BIS3}, is used to prove the $C^0$ estimate. Condition (iii) is preserved along flow (\ref{1.2}), and can be used to derive the convergence. Condition (ii) is used to derive the $C^2$ estimate. In particular, Gauss curvature has pretty good properties, and therefore condition (ii) is redundant if $F=\sigma_n^{\frac1n}$.
There are lots of  hypersurfaces satisfying condition (iii). In fact, spheres with radius large enough or small enough  satisfy condition (iii) by condition (i).

(2) Condition (ii) is necessary for curvature functions except $\sigma_{n}^\frac{1}{n}$. If $G=\psi(\nu)\varphi(u)$ and $F^\beta=\sigma_{k}$, we  derive $D_{x_i}D_{x_i}(\psi\varphi(u)u)^\frac{1}{k+1}+(\psi\varphi(u)u)^\frac{1}{k+1}>0$. One can find that condition (ii) is equivalent to that in \cite{JLL} in this case.  If $G=\psi(\nu)u^{\a-1}$ and $F=\sigma_{k}^\frac{1}{k}$  for $1\le k<n$, $\a<k+1$ and $\beta=k$, Ivaki in \cite{IM} showed the existence of a rotationally symmetric $\psi$ with $((\psi^\frac{1}{k+1-\a})_{\theta\theta}+\psi^\frac{1}{k+1-\a})|_{\theta=0}<0$ and a smooth, closed, strictly convex initial hypersurface for which the solution to flow (\ref{1.2}) with $k<n$ will lose smoothness. Therefore $(D_iD_j\psi^{\frac{1}{1+\beta-\a}}+\delta_{ij}\psi^{\frac{1}{1+\beta-\a}})\ge0$ is essential to ensure the smoothness of the solution is preserved, and thus we believe that condition (ii) is necessary. Up to now, all results by use of anisotropic flows to solve Christoffel-Minkowski type problem require the condition $(D_iD_j\psi^{\frac{1}{1+\beta-\a}}+\delta_{ij}\psi^{\frac{1}{1+\beta-\a}})>0$. More details can be seen in \cite{BIS2,IM}.

(3)  We mention that condition (ii) is a structural condition for $G$ and it is independent of initial hypersurface and time.
 For example, if $G=\psi(\nu)u^{\a-1}\rho^\delta$, the condition is equivalent to $D_{x_i}D_{x_i}(\psi^\frac{1}{\beta+1}u^\frac{\a}{\beta+1})+\psi^\frac{1}{\beta+1}u^\frac{\a}{\beta+1}>0$ since $\rho=\sqrt{\langle X,X\rangle}$ is independent with $x_i$. Note that
 \begin{align*}
D_{x_i}u=&D_{x_i}\la X,\nu\ra=\la X,D_{x_i}\nu\ra,\\
D_{x_i}D_{x_i}u=&D_{x_i}\la X,D_{x_i}\nu\ra=\la X,D_{x_i}D_{x_i}\nu\ra=\la X,-\nu\ra=-u.
 \end{align*}
This means $$\frac{D_iD_i\psi}{\psi}-\frac{\beta}{\beta+1}(\frac{D_i\psi}{\psi})^2+\frac{2\a}{\beta+1}\frac{D_i\psi D_{x_i}u}{\psi u}+\beta+1-\a+\frac{\a(\a-\beta-1)}{\beta+1}(\frac{D_{x_i}u}{u})^2>0.$$
We have
\begin{align*}
&\frac{D_iD_i\psi}{\psi}+\frac{\a-\beta}{\beta+1-\a}\(\frac{D_i\psi}{\psi}\)^2+\(\beta+1-\a\)\\
&-\frac{\a}{\beta+1}\(\sqrt{\beta+1-\a}\frac{D_{x_i}u}{u}-\frac{1}{\sqrt{\beta+1-\a}}\frac{D_i\psi}{\psi}\)^2>0
\end{align*}
if $\a\le0$. 
Thus we let $\a\le0$ and $(D_iD_j\psi^{\frac{1}{1+\beta-\a}}+\delta_{ij}\psi^{\frac{1}{1+\beta-\a}})>0$ 
in \cite{DL3}.

 Moreover for $G=\psi(\nu)u^{\a-1}\rho^\delta$, condition (i) in Theorem \ref{t1.2} is equivalent to $\a+\delta+\beta<1$. We have proved similar results in \cite{DL3} if $\psi=1$ or $F=\sigma_{k}^\frac{1}{k}$. For general spherical function $f(\nu)$ and curvature function $F(\lambda_i)$, and arbitrarily strictly convex initial hypersurface, by Theorem \ref{t1.2}, (\ref{1.3})--(\ref{1.8}), we have the following corollary, which covers lots of known results, e.g. \cite{IM,DL,SWM}, and partial results in \cite{DL3,BIS3,CL}. We promote subconvergence to full convergence by Theorem \ref{t5.1}.


\begin{corollary}\label{c1.2}
Let $F\in C^2(\Gamma_+)\cap C^0(\p\Gamma_+)$ satisfy Assumption \ref{a1.1}, and let $M_0$ be a closed, smooth, uniformly convex hypersurface in $\mathbb{R}^{n+1}$, $n\ge2$, enclosing the origin. If $\a+\delta+\beta<1$, $\beta>0$,  and
$D_{x_i}D_{x_i}(\psi^\frac{1}{\beta+1}u^\frac{\a}{\beta+1})+\psi^\frac{1}{\beta+1}u^\frac{\a}{\beta+1}>0$,
then flow (\ref{1.3}) has a unique smooth and uniformly convex solution $M_t$ for all time $t>0$. After rescaling $X\to \varphi^{-1}(t)X$ defined in (\ref{1.5}), the hypersurface $\widetilde M_t=\varphi^{-1}M_t$ converges smoothly to a smooth solution of $\psi(\nu)u^{\a-1}\rho^\delta F^\beta=1$.

\end{corollary}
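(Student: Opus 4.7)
The plan is to reduce Corollary~\ref{c1.2} to Theorem~\ref{t1.2} via the explicit time rescaling already set up in (\ref{1.3})--(\ref{1.8}), and then to promote subsequential convergence to full smooth convergence by invoking Theorem~\ref{t5.1}. The entire argument amounts to a verification that the particular kernel $G(\nu,X)=\psi(\nu)u^{\a-1}\rho^\delta$ satisfies the three structural hypotheses of Theorem~\ref{t1.2} once the flow has been normalized, so no new curvature estimates need to be proved.

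First I would record the reduction. Because $\a+\delta+\beta<1$, the substitution $\widetilde X(\cdot,\tau)=\varphi^{-1}(t)X(\cdot,t)$, with $\tau$ and $\varphi$ as in (\ref{1.4})--(\ref{1.5}), converts the expanding flow (\ref{1.3}) into the normalized flow (\ref{1.7}), equivalently (\ref{1.8}), which is exactly flow (\ref{1.2}) for the choice $G=\psi u^{\a-1}\rho^\delta$. Long-time existence and convergence of (\ref{1.3}) up to rescaling therefore follow once Theorem~\ref{t1.2} is applied to this $G$.

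Next I would verify conditions (i)--(iii) of Theorem~\ref{t1.2}. For (i), on the ray $y=sx$ with $x\in\mS^n$ one has $u=\langle sx,x\rangle=s$ and $\rho=s$, hence $G(x,sx)s^\beta=\psi(x)s^{\a+\delta+\beta-1}$; since $\psi$ is continuous and positive on the compact sphere, the exponent $\a+\delta+\beta-1<0$ delivers precisely the limits $\to0$ as $s\to\infty$ and $\to\infty$ as $s\to0^+$. For (ii), because $\rho=\sqrt{\langle X,X\rangle}$ is a scalar function of the fixed point $X$, the positive factor $\rho^{\delta/(\beta+1)}$ in $(Gu)^{1/(\beta+1)}=\psi^{1/(\beta+1)}u^{\a/(\beta+1)}\rho^{\delta/(\beta+1)}$ commutes with $D_{x_i}D_{x_j}$ and cancels from the matrix inequality, reducing Theorem~\ref{t1.2}(ii) to the hypothesis $D_{x_i}D_{x_j}(\psi^{1/(\beta+1)}u^{\a/(\beta+1)})+\psi^{1/(\beta+1)}u^{\a/(\beta+1)}\delta_{ij}>0$ stated in the corollary. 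For (iii), the key observation recorded in (\ref{1.6}) is that after rescaling one has $G\widetilde F^\beta|_{\widetilde M_0}=C_0\, GF^\beta|_{M_0}$; choosing the integration constant $C_0$ large enough makes this quantity exceed $1$, and $\widetilde M_0=\varphi^{-1}(0)M_0$ remains closed, smooth and uniformly convex, so the branch $G\widetilde F^\beta|_{\widetilde M_0}\ge1$ of hypothesis (iii) is satisfied for an \emph{arbitrary} strictly convex $M_0$.

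Theorem~\ref{t1.2} then yields long-time existence of the normalized flow together with a smooth subsequential limit $\widetilde M_\infty$ solving $\psi(\nu)u^{\a-1}\rho^\delta F^\beta=1$; unwinding the time change in (\ref{1.4}) transports the long-time existence back to flow (\ref{1.3}). The final step is to invoke Theorem~\ref{t5.1} to rule out distinct subsequential limits and conclude full smooth convergence of $\widetilde M_\tau$. The conceptual obstacle in this reduction is ensuring that condition (ii) is genuinely \emph{structural}, i.e.\ independent of the evolving hypersurface and of time, which is precisely the role of the separation of the $\rho$-dependence from the $\nu$- and $X$-dependence in the remark preceding the corollary; the remaining steps are bookkeeping once Theorems~\ref{t1.2} and~\ref{t5.1} are granted.
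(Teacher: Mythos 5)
Your proposal is correct and follows essentially the same route as the paper: reduce to Theorem \ref{t1.2} via the rescaling (\ref{1.3})--(\ref{1.8}), check that condition (i) amounts to $\a+\delta+\beta<1$, that condition (ii) reduces to the stated hypothesis because $\rho$ is independent of $x_i$, and that condition (iii) is arranged for the normalized initial data by taking $C_0$ large as in (\ref{1.6}), then use the uniqueness result of Theorem \ref{t5.1} to upgrade subsequential to full convergence. This is exactly the argument the paper gives in the remark preceding the corollary and at the end of the proof of Theorem \ref{t1.2}.
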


We see that condition (i) and (ii) are crucial to Theorem \ref{t1.2} if $F\ne\sigma_{n}^\frac{1}{n}$. Condition (iii) in Theorem \ref{t1.2} is used to prove the convergence. Thus we hope that condition (iii) can be removed, i.e. we hope arbitrarily strictly convex initial hypersurface can smoothly converge along flow (\ref{1.2}), which has important implications for other applications of flow, such as proving geometric inequalities.

Let $G=\varphi(\nu,u)$ and $F^k$ is divergence-free (i.e. $\sum_{i}D_i(F^{k})^{ij}=0$) for some positive integer $k$, we can drop condition (iii). Furthermore, if $G=\varphi(\nu,u)\phi(X)$ and $F=\sigma_n^\frac{1}{n}$, we can drop condition (ii) and (iii).

\begin{theorem}\label{t1.3}
Let $F\in C^2(\Gamma_+)\cap C^0(\p\Gamma_+)$ satisfy Assumption \ref{a1.1}, and let $M_0$ be a closed, smooth, uniformly convex hypersurface in $\mathbb{R}^{n+1}$, $n\ge2$, enclosing the origin. If $F^k$ is divergence-free and $G=\varphi(\nu,u)$ satisfies
	
$(i)$ ${\lim\sup}_{s\rightarrow+\infty}[\varphi(\nu,s)s^\beta]<1<\lim\inf_{s\rightarrow0^+}[\varphi(\nu,s)s^\beta]$, $\beta>0$,
	
$(ii)$ $(D_{x_i}D_{x_j}(\varphi u)^\frac{1}{\beta+1}+(\varphi u)^\frac{1}{\beta+1}\delta_{ij})>0$,
	
\noindent then flow (\ref{1.2}) has a unique smooth strictly convex solution $M_t$ for all time $t>0$, and a subsequence of $M_t$ converges in $C^\infty$-topology to a positive, smooth, uniformly convex solution to $\varphi(\nu,u)F^{\beta}=1$.

\end{theorem}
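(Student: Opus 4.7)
The plan is to split the argument into a \emph{long-time existence} half, which is essentially free once the estimates from the proof of Theorem~\ref{t1.2} are revisited, and a \emph{subsequential convergence} half, where the divergence-free hypothesis on $F^k$ takes over the role played by condition (iii) in Theorem~\ref{t1.2}.

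First I would check that the $C^0$, $C^1$ and $C^2$ bounds derived for flow (\ref{1.2}) in the proof of Theorem~\ref{t1.2} use only hypotheses (i) and (ii), not (iii). Indeed, condition (i), specialized to $G(\nu,X)=\varphi(\nu,u)$, still produces barriers for the support function through the behavior of $\varphi(\nu,s)s^\beta$ at $s\to0^+$ and $s\to\infty$, giving uniform $C^0$ and hence $C^1$ estimates. Condition (ii), a purely structural assumption on $\varphi u$, is what drives the upper bound on the principal curvature radii and a uniform positive lower bound on $F$, via the same auxiliary function used in Theorem~\ref{t1.2}. With these in place the standard parabolic theory yields a smooth, uniformly convex solution $M_t$ for all $t\ge 0$ with $C^{k,\alpha}$-norms bounded independently of $t$.

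Second, to replace condition (iii) I would construct a Lyapunov functional exploiting $\sum_i D_i (F^k)^{ij}=0$. Integrating by parts twice on $\mathbb{S}^n$ along the flow (using that $(F^k)^{ij}$ is symmetric and divergence-free) yields the Reilly-type identity
\[
\frac{d}{dt}\int_{\mathbb{S}^n} u F^k \, dx \;=\; (k+1)\int_{\mathbb{S}^n} F^k \, u_t \, dx \;=\; (k+1)\int_{\mathbb{S}^n} u F^k\bigl(\varphi F^\beta-1\bigr) dx .
\]
Since $G=\varphi(\nu,u)$ is a function of $(\nu,u)$ alone, one may choose a primitive $\Phi(\nu,u)$ with $\partial_u \Phi$ adjusted so that the time derivative of $\int_{\mathbb{S}^n}\Phi(\nu,u)\,dx$ along the flow produces a matching $\int u F^k (\varphi F^\beta -1)\,dx$ contribution. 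Combining these with appropriate weights, I would obtain a functional $\mathcal{E}(t)$ with
\[
\frac{d}{dt}\mathcal{E}(t) \;\le\; -c\int_{\mathbb{S}^n}\bigl(\varphi F^\beta-1\bigr)^{2} u F^k\, dx,
\]
where $c>0$ depends only on the uniform estimates. Since $\mathcal{E}$ is bounded on $[0,\infty)$ (again by the $C^{2}$ estimates), integrating in $t$ forces the integrand to tend to zero in $L^1$ along some sequence $t_j\to\infty$. Passing to a further subsequence by Blaschke selection and the uniform $C^{k,\alpha}$ bounds, the hypersurfaces $M_{t_j}$ converge in $C^\infty$ to a smooth, uniformly convex limit satisfying $\varphi(\nu,u)F^\beta = 1$ pointwise.

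The main obstacle is the construction of the precise Lyapunov functional $\mathcal{E}$: the integer $k$ for which $F^k$ is divergence-free has no a priori relation to the exponent $\beta$, so the cross terms arising when one differentiates $\int u F^k\, dx$ and the $\varphi$-primitive term simultaneously must be arranged to cancel. The divergence-free identity is precisely what allows the two integration by parts on the curvature term to close up without generating uncontrollable boundary-type contributions, and balancing the $\varphi(\nu,u)$ weight against this identity is the delicate point. Once $\mathcal{E}$ is correctly identified the rest of the argument is standard parabolic bootstrapping; the bulk of the technical work therefore lies in choosing $\Phi(\nu,u)$ and the relative weight so that $d\mathcal{E}/dt$ has the desired sign.
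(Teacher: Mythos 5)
Your proposal is correct and follows essentially the same route as the paper: the a priori estimates from Theorem \ref{t1.2} use only (i) and (ii), and the Lyapunov functional you describe is exactly the paper's $W_F-U$ with $U=\int_{\mS^n}\int_\eps^u\varphi^{-k/\beta}(x,s)\,ds\,dx$, whose derivative along the flow is $\int_{\mS^n}u(\varphi F^\beta-1)(F^k-\varphi^{-k/\beta})\,dx\ge0$. The ``delicate'' balancing you anticipate is in fact immediate: with $\partial_u\Phi=\varphi^{-k/\beta}$ the two factors $\varphi F^\beta-1$ and $F^k\varphi^{k/\beta}-1$ have the same sign simply because $a\mapsto a^{\beta/k}$ is increasing, and no further weighting is needed.
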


\textbf{Remark:}  The similar results have been derived in \cite{BIS3} and \cite{JLL}. In \cite{BIS3},  Bryan-Ivaki-Scheuer derive this result if $F=\sigma_{n}^{\frac 1n}$ and $\varphi(\nu,u)=\psi(\nu)\varphi(u)$. The range of curvature functions is broadened in this theorem and usually general curvature functions are more difficult to handle than $\sigma_{n}$. Ju-Li-Liu in \cite{JLL} give a different condition and derive a solution of $\psi(\nu)\varphi(u)\sigma_k=c$. In Orlicz type problems the constant $c$ can't be set to $1$ in general.

\begin{theorem}\label{t1.4}
Let $M_0$ be a closed, smooth, uniformly convex hypersurface in $\mathbb{R}^{n+1}$, $n\ge2$, enclosing the origin. Let $G=\varphi(\nu,u)\phi(X)$ and $F=\sigma_n^\frac{1}{n}$. Suppose 	
$$\lim\sup_{s\rightarrow+\infty}[\max_{y=sx}\varphi(x,s)\phi(y)s^\beta]<1<\lim\inf_{s\rightarrow0^+}[\min_{y=sx}\varphi(x,s)G(y)s^\beta].$$
Then flow (\ref{1.2}) has a unique smooth strictly convex solution $M_t$ for all time $t>0$, and a subsequence of $M_t$ converges in $C^\infty$-topology to a positive, smooth, uniformly convex solution to $\varphi(\nu,u)\phi(X)\sigma_{n}^\frac{\beta}{n}=1$.
\end{theorem}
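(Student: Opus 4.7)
The plan is to follow the long-time-existence-plus-subsequential-convergence scheme of Theorem~\ref{t1.2}, exploiting the special Monge--Amp\`ere structure of $F=\sigma_n^{1/n}$ (and the fact that $F^n=\sigma_n$ is divergence-free) to dispense with both the structural assumption (ii) and the initial-hypersurface assumption (iii).

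For the $C^0$ and $C^1$ estimates, the support function evolves by $\partial_t u=(\varphi(\nu,u)\phi(X)F^\beta-1)u$. At a spatial maximum of $u$ one has $Du=0$, hence $X=u\nu$, and $D^2u\le 0$ forces $\lambda_i=u_{ii}+u\le u$, whence $F=\sigma_n^{1/n}\le u$ by $1$-homogeneity. Writing $y=sx$ with $s=u$ and $x=\nu$, the hypothesis $\limsup_{s\to\infty}\max_{y=sx}\varphi(x,s)\phi(y)s^\beta<1$ gives $\tfrac{d}{dt}u_{\max}\le 0$ once $u_{\max}$ exceeds a threshold, yielding a uniform upper bound; the symmetric argument at $\min u$ gives a positive lower bound. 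At a spatial maximum of $\rho=|X|$, $\nabla|X|^2=0$ in the Gauss parameterization forces $X\parallel\nu$, so $\rho_{\max}\le u_{\max}$, and $|Du|^2=\rho^2-u^2$ is then uniformly controlled.

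For the $C^2$ estimate, the PDE becomes the parabolic Monge--Amp\`ere equation $\partial_t u=u[\varphi\phi(\det(u_{ij}+u\delta_{ij}))^{\beta/n}-1]$. Since $\log\det$ is concave, a Tso-style maximum principle applied to $\log\Phi-A\log u$ (with $\Phi:=\varphi\phi F^\beta$ and $A$ large) yields uniform two-sided bounds on $\Phi$, equivalently on $\det(u_{ij}+u\delta_{ij})$; the concavity contribution absorbs the bad second-order terms that in Theorem~\ref{t1.2} required the structural condition (ii). A further maximum-principle argument on $\log\lambda_{\max}-Au$, using the identity $\sum F^{ii}\lambda_i=F$, yields $\sup_i\lambda_i\le C$, and combined with the determinant lower bound this forces $\lambda_i\ge c>0$. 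Evans--Krylov applied to the concave $\log\det$ then upgrades to $C^{2,\alpha}$, and standard parabolic bootstrapping produces uniform $C^k$ bounds for all $k$, hence long-time existence.

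For subsequential convergence, fix $t_k\to\infty$ and consider the time-shifted flows $\tilde u_k(\cdot,s):=u(\cdot,t_k+s)$; by Arzel\`a--Ascoli a subsequence converges in $C^\infty_{\rm loc}(\mS^n\times\mR)$ to a limit flow $\tilde u_\infty$ satisfying the same equation on all of $\mR$. As in the proof of Theorem~\ref{t1.3}, the divergence-free property of $\sigma_n$ supplies an integrated monotonicity (for a suitable functional of modified quermassintegral type) that forces $\partial_s\tilde u_\infty\equiv 0$, so $u_\infty:=\tilde u_\infty(\cdot,0)$ solves $\varphi(\nu,u)\phi(X)\sigma_n^{\beta/n}=1$. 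The main obstacle will be the $C^2$ estimate without assumption (ii): every bad second-order term produced by differentiating $\varphi(\nu,u)\phi(X)$ twice must be absorbed either by the concavity of $\log\det$ or by the Gauss-curvature identity $\sum F^{ii}\lambda_i=F$, an absorption mechanism specific to $F=\sigma_n^{1/n}$ that does not extend to general curvature functions.
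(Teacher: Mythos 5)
Your a priori estimate sketch (the $C^0$ bound at spatial extrema of $u$ and the special Monge--Amp\`ere $C^2$ estimate for $F=\sigma_n^{1/n}$) is in line with what the paper does via Lemmas~\ref{l4.1} and~\ref{lcl}. But the convergence step has a genuine gap: you claim that the divergence-free property of $\sigma_n$ supplies a monotone quantity ``of modified quermassintegral type,'' analogous to $W_F-U$ from the proof of Theorem~\ref{t1.3}. That functional does not work once $G$ carries a factor $\phi(X)$. If you try $W_F-U$ with $W_F=\frac{1}{n+1}\int u\sigma_n\,dx$ and $U=\int\int_\eps^u\varphi^{-n/\beta}(x,s)\,ds\,dx$, then $\p_t(W_F-U)=\int u(\varphi\phi\sigma_n^{\beta/n}-1)\bigl(\sigma_n-\varphi^{-n/\beta}\bigr)dx$, and the two factors need not share a sign because $\varphi^{-n/\beta}$ lacks the $\phi^{-n/\beta}$ needed to mirror the zero set of $\varphi\phi\sigma_n^{\beta/n}-1$. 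Divergence-freeness is simply the wrong structure to exploit here.

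What the paper actually uses is a \emph{dual} Lyapunov functional written in the radial parameterization: $V=\int_{\mS^n}\int_\eps^\rho\phi^{n/\beta}(\xi,s)\,s^n\,ds\,d\xi$, paired with $U$ as above, and the key identity is the Jacobian relation for the reverse radial Gauss map, $d\xi=\frac{u}{\rho^{n+1}K}\,dx$ with $K=1/\sigma_n(\lambda)$, combined with $\p_t\rho/\rho=\p_t u/u$. This yields $\rho^n\,\p_t\rho\,d\xi=\sigma_n\,\p_t u\,dx$, whence $\p_t(V-U)=\int u(\varphi\phi\sigma_n^{\beta/n}-1)(\sigma_n\phi^{n/\beta}-\varphi^{-n/\beta})\,dx\ge0$, since now both factors vanish on the same set. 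This mechanism is specific to the Gauss curvature being the Jacobian; it is not a divergence-free phenomenon, which is why the paper restricts Theorem~\ref{t1.4} to $F=\sigma_n^{1/n}$ rather than arbitrary divergence-free $F^k$. Your proposal should be revised to introduce the radial-coordinate functional $V$ and invoke (\ref{2.22}) and (\ref{2.24}) rather than appealing to divergence-freeness and quermassintegral-type quantities.
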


\textbf{Remark:} Comparing with Theorem \ref{t1.3}, we limit the curvature function, and as compensation, we increase the range of $G$. A weaker condition ia given and thus this theorem covers the result in \cite{LL}.


Under flow (\ref{1.2}), if we parameterize the hypersurface $M_t$ by inverse Gauss map $X(x,t): \mS^n\times[0,T^*)\to \mR^{n+1}$, by \cite{DL} Section 2 the support function $u$ satisfies
\begin{equation}\label{1.9}
	\begin{cases}
		&\frac{\partial u}{\partial t}=(G(x,X)F^{\beta}(\l_i)-1)u,\\
		&u(\cdot,0)=u_0.
	\end{cases}
\end{equation}

Therefore the asymptotic behavior of the flow means existence of solutions of dual Orlicz Christoffel-Minkowski type problem mentioned below. In order to prove the above theorems, we shall establish the a priori estimates for the parabolic equation (\ref{1.9}).

Convex geometry plays important role in the development of fully nonlinear partial differential equations. Curvature measures and area measures are two main subjects in convex geometry. The Minkowski problem and Christoffel-Minkowski problem are problems of prescribing a given $n$th and $k$th area measure respectively.
The classical Minkowski problem, the Christoffel-Minkowski problem, the $L^p$ Minkowski problem, the $L^p$ Christoffel-Minkowski problem in general, are beautiful examples of such interactions (e.g., \cite{FWJ2,GM,LE,LO}). Very recently, the $L^p$ dual curvature measures \cite{LYZ2,HLY2} were developed to the Orlicz case \cite{GHW1,GHW2}, which
unified more curvature measures and were named dual Orlicz curvature measures.
These curvature measures are of central importance in the dual Orlicz-Brunn-Minkowski theory, and the corresponding Minkowski problems are called the dual
Orlicz-Minkowski problems. This problem can be reduced to derive the solution of
\begin{equation}\label{1.14}
	c\varphi(u)G(\bar\nn u)\det(D^2u+u\Rmnum{1})=f \qquad \text{ on }\mS^n.
\end{equation}
There have a few results, e.g. \cite{GHW1,GHW2,LL,CLL}.

In this paper, we call
$$W_F(K)=\frac{1}{k+1}\int_{\mS^n}u_{K}F^{k}(\l_{\p K})dx$$
 modified quermassintegrals if $F$ satisfies Assumption \ref{a1.1} and $F^k$ is divergence-free. Here $u_{K}$ and $\l_{\p K}$ are respectively the support function and principal curvature radius of the boundary $\p K$ for any $K\in\cK_o^{n+1}$, the class of compact convex sets in $\mR^{n+1}$ containing the origin. To extend the Orlicz Brunn-Minkowski theory, we shall derive a vatiational formula of $W_F(K)$ with Orlicz linear combination and consider the corresponding prescribed area measure problem (Orlicz Christoffel-Minkowski type problem), which generalizes the Christoffel-Minkowski setting \cite{XZ}. Firstly, we introduce the Orlicz linear combination $+_\varphi(K_1,\cdots,K_m,\a_1,\cdots,\a_m)$ posed in \cite{GHW}.  More generally, for $K_j\in\cK_o^{n+1}$, if $\a_j\ge0$ and $\varphi_j$ are suitable Orlicz functions ( i.e. $\varphi_j$ are convex, strictly
 increasing with $\varphi_j(0)=0$, $\varphi_j(1)=1$), $j=1,\cdots,m$, the Orlicz linear combination $+_\varphi(K_1,\cdots,K_m,\a_1,\cdots,\a_m)$ is defined by
\begin{equation*}
u_{+_\varphi(K_1,\cdots,K_m,\a_1,\cdots,\a_m)}(x)=\inf\{\l>0:\sum_{j=1}^m\a_j\varphi_j\(\frac{u_{K_j}(x)}{\l}\)\le1\},
\end{equation*}
for all $x\in\mR^{n+1}$. To derive the variational formula, we focus on the case $m=2$. For $K,L\in\cK_o^{n+1}$ and $\a,\beta\ge0$, the Orlicz linear combination $+_\varphi(K,L,\a,\beta)$ can be defined via the implicit equation
\begin{equation*}
	\a\varphi_1\(\frac{u_K(x)}{u_{+_\varphi(K,L,\a,\beta)}(x)}\)+\beta\varphi_2\(\frac{u_L(x)}{u_{+_\varphi(K,L,\a,\beta)}(x)}\)=1,
\end{equation*}
if $\a u_K(x)+\beta u_L(x)>0$, and by $u_{+_\varphi(K,L,\a,\beta)}(x)=0$ if $\a u_K(x)+\beta u_L(x)=0$ for all $x\in\mR^{n+1}$.
When $\varphi_1(x)=\varphi_2(x)=x^p$, it reverts back to $L^p$ linear combination. For convenience, we shall write $K+_{\varphi,\eps}L$ instead of $+_\varphi(K,L,1,\eps)$ for $\eps\ge0$. Now we can introduce the Orlicz-mixed modified quermassintegrals which are defined by the variation of modified quermassintegrals.
\begin{definition}\label{d1.6}
Let $F$ satisfy Assumption \ref{a1.1} and $F^k$ be divergence-free. We
define the Orlicz-mixed modified quermassintegral of two smooth strictly convex bounded domains $K\in\cK_{oo}^{n+1}$, $L\in\cK_{o}^{n+1}$ by
\begin{equation*}
W_{\varphi,F}(K,L)=\lim_{\eps\rightarrow0^+}\dfrac{W_F(K+_{\varphi,\eps}L)-W_F(K)}{\eps},
\end{equation*}
where $\cK_{oo}^{n+1}$ denotes the class of compact convex sets in $\mR^{n+1}$ containing the origin in their interior.
\end{definition}
Then we have the following integral representation of $W_{\varphi,F}(K,L)$.
\begin{theorem}\label{t1.7}
Let $F,K,L$ satisfy the assumptions in Definition \ref{d1.6}. Then the
integral representation of the Orlicz-mixed modified quermassintegral $W_{\varphi,F}(K,L)$ is given by
\begin{equation*}
W_{\varphi,F}(K,L)=\int_{\mS^n}\dfrac{u_K(x)}{(\varphi_1)'_l(1)}\varphi_2\(\frac{u_L(x)}{u_K(x)}\)F^k(\l_{\p K})dx,
\end{equation*}
where $(\varphi_1)'_l$ is the left derivative of $\varphi_1$.
\end{theorem}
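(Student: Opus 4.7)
The plan is to differentiate the implicit definition of $K +_{\varphi,\eps} L$ in $\eps$ at $\eps=0^+$ to obtain the pointwise variation of the support function, and then substitute this variation into $W_F$, collapsing the resulting expression by combining the divergence-free property of $F^k$ with Euler's homogeneity identity.

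First, set $u_\eps := u_{K+_{\varphi,\eps}L}$. Since $\varphi_1(u_K/u_\eps)=1-\eps\,\varphi_2(u_L/u_\eps)\le 1$ and $\varphi_1$ is strictly increasing with $\varphi_1(1)=1$, one has $u_\eps\ge u_K>0$ for every $\eps>0$, and $u_K/u_\eps\to 1^-$ as $\eps\to 0^+$. Formally differentiating the defining relation at $\eps=0^+$ therefore picks up the \emph{left} derivative of $\varphi_1$ at $1$ and yields
\begin{equation*}
\dot u(x) := \lim_{\eps\to 0^+}\frac{u_\eps(x)-u_K(x)}{\eps}=\frac{u_K(x)}{(\varphi_1)'_l(1)}\,\varphi_2\!\left(\frac{u_L(x)}{u_K(x)}\right).
\end{equation*}
A standard implicit-function/Arzel\`a--Ascoli argument, using that $u_K\in C^2(\mS^n)$ is bounded below, upgrades this pointwise one-sided limit to uniform $C^2$-convergence $u_\eps\to u_K$ on $\mS^n$, which legitimizes differentiating $W_F(K+_{\varphi,\eps}L)$ under the integral.

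Next, writing $(F^k)^{ij}:=\p F^k/\p a_{ij}$ for the derivative of $F^k$ viewed as a function of the matrix $a_{ij}=D_{ij}u+u\delta_{ij}$, the chain rule combined with the formula for $\dot u$ above gives
\begin{equation*}
W_{\varphi,F}(K,L)=\frac{1}{k+1}\int_{\mS^n}\!\left[\dot u\,F^k(\l_{\p K})+u_K\,(F^k)^{ij}\bigl(D_{ij}\dot u+\dot u\,\delta_{ij}\bigr)\right]dx.
\end{equation*}
Because $F^k$ is divergence-free, i.e.\ $\sum_i D_i(F^k)^{ij}\equiv 0$, two integrations by parts on $\mS^n$ move the Hessian off of $\dot u$ and onto $u_K$ without producing derivatives of $(F^k)^{ij}$. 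Euler's identity $(F^k)^{ij}(D_{ij}u_K+u_K\delta_{ij})=kF^k(\l_{\p K})$, which holds because $F^k$ is $k$-homogeneous in the principal radii, then collapses the bracket to $(k+1)\dot u\,F^k(\l_{\p K})$. Substituting the explicit form of $\dot u$ yields the claimed identity.

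The main obstacle I anticipate is the regularity step: since $\varphi_1$ and $\varphi_2$ are only assumed convex (not $C^1$) near $1$, the derivative of $u_\eps$ in $\eps$ at $0^+$ exists only as a one-sided limit, so I must verify that the implicit relation indeed defines $u_\eps$ as a one-sided differentiable family in $\eps$ at $0$ with the stated limit holding \emph{uniformly} in $x\in\mS^n$, and then strengthen this to uniform $C^2$-convergence of the Hessians. Once that regularity is in place, the remainder is the integration-by-parts/Euler calculation that singled out $W_F$ as the natural modified quermassintegral in the first place.
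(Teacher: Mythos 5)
Your proposal is correct and follows essentially the same route as the paper: the paper quotes the uniform one-sided variational formula for $u_{K+_{\varphi,\eps}L}$ from Gardner--Hug--Weil (its Lemma 6.2, which is exactly your $\dot u$, left derivative included) and then proves the integral identity by differentiating $W_F$ under the integral, moving the Hessian onto $u_K$ via the divergence-free property of $F^k$, and invoking Euler's identity to collapse the bracket to $(k+1)\dot u\,F^k$. The only difference is that you sketch a derivation of the support-function variation and flag the regularity upgrade yourself, whereas the paper simply cites it.
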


We obtain by flow method the following inequalities involving the modified quermassintegrals, which can lead to $L^p$ Minkowski type inequalities (see Section 6 for details).
\begin{corollary}\label{t1.8} (1)
Let $F,\varphi$ satisfy the assumptions in Theorem \ref{t1.3}. Then for any $K\in\cK_{oo}^{n+1}$, there exists $L\subset\mR^{n+1}$ satisfied that $\p L$ is a smooth, strictly convex solution of $\varphi(\nu,u)F^{\beta}=1$, such that
\begin{equation*}
W_F(K)-W_F(L)\le\int_{\mS^n}\int_{u_{L}}^{u_{K}}\varphi^{-\frac{k}{\beta}}(x,s)dsdx
\end{equation*}
with equality if and only if $K=L$.

(2) Let $\phi,\varphi$ satisfy the assumptions in Theorem \ref{t1.4}. Then for any $K\in\cK_{oo}^{n+1}$, there exists $L\subset\mR^{n+1}$ satisfied that $\p L$ is a smooth, strictly convex solution of $\varphi(\nu,u)\phi(X)\sigma_{n}^\frac{\beta}{n}=1$, such that
\begin{equation*}
\int_{\mS^n}\int_\eps^{\rho_K}\phi^\frac{n}{\beta}(\xi,s)s^ndsd\xi-\int_{\mS^n}\int^{\rho_L}_\eps\phi^\frac{n}{\beta}(\xi,s)s^ndsd\xi\le \int_{\mS^n}\int_{u_L}^{u_K}\varphi^{-\frac{n}{\beta}}(x,s)dsdx
\end{equation*}
for $\forall \eps>0$,
with equality if and only if $K=L$.
\end{corollary}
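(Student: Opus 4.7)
The plan is to build, for each part, a Lyapunov functional along the flow (\ref{1.2}) that is monotone non-decreasing and whose endpoint difference is precisely the asserted inequality; the subsequential convergence provided by Theorems \ref{t1.3} and \ref{t1.4} then finishes the argument.

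For part (1), with $G=\varphi(\nu,u)$ and $F^k$ divergence-free, I would take the initial hypersurface to be $\partial K$, let $M_t$ be the resulting flow, and define
\begin{equation*}
\mathcal{E}(t)\;=\;W_F(M_t)\;-\;\int_{\mS^n}\int_{u_K(x)}^{u(x,t)}\varphi^{-k/\beta}(x,s)\,ds\,dx.
\end{equation*}
Differentiating $W_F(M_t)=\frac{1}{k+1}\int uF^k\,dx$ along (\ref{1.9}) and integrating by parts on $\mS^n$ (the hypothesis $D_i(F^k)^{ij}=0$ eliminates the divergence term, and the Euler relation $(F^k)^{ij}a_{ij}=kF^k$ closes the identity) yields $\frac{d}{dt}W_F(M_t)=\int\dot u\,F^k\,dx$. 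Subtracting the straightforward derivative of the second term gives
\begin{equation*}
\mathcal{E}'(t)\;=\;\int_{\mS^n} u(\varphi F^\beta-1)\bigl(F^k-\varphi^{-k/\beta}\bigr)dx.
\end{equation*}
Writing $a:=\varphi F^\beta$ and $F^k=a^{k/\beta}\varphi^{-k/\beta}$ factors the integrand as $u\,\varphi^{-k/\beta}(a-1)(a^{k/\beta}-1)\geq 0$, since both parenthetical factors carry the same sign (as $\beta,k>0$). Hence $\mathcal{E}$ is non-decreasing, so $\mathcal{E}(0)\leq \lim_{t\to\infty}\mathcal{E}(t)$; by Theorem \ref{t1.3} the right side equals $\mathcal{E}(L)=W_F(L)-\int\!\int_{u_K}^{u_L}\varphi^{-k/\beta}\,ds\,dx$, and rearranging gives the claim. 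Equality forces $\mathcal{E}'\equiv 0$, hence $\varphi F^\beta\equiv 1$ on $M_0$, so the flow is stationary at $K$ and $L=K$.

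Part (2) follows the same template but exploits both parametrizations of $M_t$. The Gauss map yields (\ref{1.9}), while the radial map $X(\xi,t)=\rho(\xi,t)\xi$ gives $\dot\rho=(A-1)\rho$ with $A:=\varphi\phi\,\sigma_n^{\beta/n}$, obtained from $\dot\rho\langle\xi,\nu\rangle=(A-1)u$ together with $\langle\xi,\nu\rangle=u/\rho$. I would then work with
\begin{equation*}
\mathcal{E}(t)\;=\;\int_{\mS^n}\int_\eps^{\rho(\xi,t)}\phi^{n/\beta}(\xi,s)\,s^n\,ds\,d\xi\;-\;\int_{\mS^n}\int_{u_K(x)}^{u(x,t)}\varphi^{-n/\beta}(x,s)\,ds\,dx,
\end{equation*}
differentiate each term, and convert the second integral to the radial variable via the identity $u\sigma_n\,dx=\rho^{n+1}\,d\xi$, obtaining
\begin{equation*}
\mathcal{E}'(t)\;=\;\int_{\mS^n}\rho^{n+1}(A-1)\bigl[\phi^{n/\beta}-\varphi^{-n/\beta}\sigma_n^{-1}\bigr]d\xi.
\end{equation*}
Substituting $\sigma_n=A^{n/\beta}(\varphi\phi)^{-n/\beta}$ rewrites the bracket as $\phi^{n/\beta}A^{-n/\beta}(A^{n/\beta}-1)$, and the same sign argument delivers $\mathcal{E}'\geq 0$. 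Invoking Theorem \ref{t1.4} and comparing $\mathcal{E}(0)$ with $\lim_{t\to\infty}\mathcal{E}(t)=\mathcal{E}(L)$ gives the inequality; equality again forces stationarity of the flow at $K$, hence $L=K$.

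The two subtleties I expect to require care are, first, the integration-by-parts identity $\frac{d}{dt}W_F=\int\dot u F^k\,dx$, which is clean only because the divergence-free assumption on $F^k$ combines with $(F^k)^{ij}a_{ij}=kF^k$ — precisely why part (1) is posed in that setting; and second, since Theorems \ref{t1.3}--\ref{t1.4} provide only subsequential smooth convergence, one must invoke the monotonicity of $\mathcal{E}$ to ensure that the full limit $\lim_{t\to\infty}\mathcal{E}(t)$ exists and agrees with the value $\mathcal{E}(L)$ read off along the convergent subsequence.
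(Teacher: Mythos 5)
Your proposal is correct and follows essentially the same route as the paper: the paper also runs the flow from $M_0=\partial K$ and invokes the monotonicity of $W_F-U$ (respectively $V-U$) established in the proofs of Theorems \ref{t1.3} and \ref{t1.4}, where $U=\int_{\mS^n}\int_\eps^u\varphi^{-k/\beta}\,ds\,dx$ differs from your $\mathcal{E}$ only by a time-independent constant, and the sign argument via $a=\varphi F^\beta$ is the same factorization the paper writes as a quotient. Your remarks on the divergence-free integration by parts and on upgrading subsequential convergence to a genuine limit of $\mathcal{E}$ via monotonicity are exactly the points the paper relies on implicitly.
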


By Theorem \ref{t1.7}, the modified Orlicz surface area measure of a smooth uniformly convex bounded domain $K\subset\mR^{n+1}$ can be defined by
\begin{equation*}
dS_{\varphi,F}(K,x)=u_K(x)\varphi\(\frac{1}{u_K(x)}\)F^k(\l_{\p K})dx.
\end{equation*}
In particular, if $F^k=\sigma_{k}$, it reverts back to the Orlicz $k$-th surface area measure. This motivates us to raise the following Orlicz Christoffel-Minkowski type problem, which is a nonlinear elliptic PDE on $\mS^n$.
\begin{problem}
(\textbf{Orlicz Christoffel-Minkowski type problem}). Given a smooth positive function $f(x)$ defined on $\mS^n$, what are necessary and sufficient conditions for $f(x)$, such that there exists a smooth strictly convex bounded domain $K\subset\mR^{n+1}$ satisfying
$$dS_{\varphi,F}(K,x)=f(x)dx.$$
That is, finding a smooth positive solution $u$ to
$$u(x)\varphi\(\frac{1}{u(x)}\)F^k([D^2u(x)+u(x)\Rmnum{1}])=f(x),$$
such that $[D^2u(x)+u(x)\Rmnum{1}]>0$ for all $x\in\mS^n$.
\end{problem}

Inspired by \cite{GRW,GHW1,GHW2}, we shall consider the dual Orlicz Christoffel-Minkowski type problem. Since $X=Du+ux,$ as before, this problem can be reduced to the following nonlinear PDE:
\begin{equation}\label{1.16}
	G(x,X)F(D^2u+u\Rmnum{1})=c\quad\text{ on } \mS^n,
\end{equation}
or
\begin{equation}\label{1.15}
	G(x,u,Du)F(D^2u+u\Rmnum{1})=c\quad\text{ on } \mS^n.
\end{equation}

Note that the case $F\ne\sigma_{n}^\frac{1}{n}$ is generally harder to deal with than $F=\sigma_{n}^\frac{1}{n}$. If $G$ has a particular form, some previous known results of this problem can be seen in \cite{GM,HMS,GX,LYZ2,CL,HZ,CCL,CHZ,BF,DL3} etc..

By Theorem \ref{t1.2}, we prove the following existence results of the dual Orlicz Minkowski problem and dual Orlicz Christoffel-Minkowski type problem by the asymptotic behavior of the anisotropic curvature flow.
\begin{corollary}\label{t1.5}
Let $F$ satisfies Assumption \ref{a1.1} and
 $$\lim\sup_{s\rightarrow+\infty}[\max_{y=sx}G(x,y)s]<1<\lim\inf_{s\rightarrow0^+}[\min_{y=sx}G(x,y)s].$$
 Suppose either

 (\rmnum{1}) $\exists \beta>0$ such that  $(D_{x_i}D_{x_j}(G^\beta u)^\frac{1}{\beta+1}+(G^\beta u)^\frac{1}{\beta+1}\delta_{ij})>0$;

or (\rmnum{2}) $F=\sigma_{n}^\frac{1}{n}$.

\noindent Then there exists a  positive, smooth, uniformly convex solution to  (\ref{1.16}) with $c=1$.
\end{corollary}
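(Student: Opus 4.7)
The plan is to reduce the equation $G(x,X)F(D^2u+u\mathrm{I})=1$ to a form directly solvable by Theorem \ref{t1.2}. For the $\beta>0$ provided in case (i) (respectively, any fixed $\beta>0$ in case (ii)), rewrite
\[
G(x,X)F(D^2u+u\mathrm{I})=1 \iff G^\beta(x,X)\,F^\beta(D^2u+u\mathrm{I})=1,
\]
and apply Theorem \ref{t1.2} to the flow \eqref{1.2} with $G$ replaced by $\widetilde G:=G^\beta$. The critical observation is that, since $t\mapsto t^\beta$ is strictly monotone on $(0,\infty)$ with fixed point $1$, the hypothesis
\[
\limsup_{s\to+\infty}\Bigl[\max_{y=sx}G(x,y)s\Bigr]<1<\liminf_{s\to 0^+}\Bigl[\min_{y=sx}G(x,y)s\Bigr]
\]
is equivalent, after raising to the $\beta$-th power, to
\[
\limsup_{s\to+\infty}\Bigl[\max_{y=sx}\widetilde G(x,y)s^\beta\Bigr]<1<\liminf_{s\to 0^+}\Bigl[\min_{y=sx}\widetilde G(x,y)s^\beta\Bigr],
\]
which is precisely condition (i) of Theorem \ref{t1.2}.

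Next I would verify condition (ii) of Theorem \ref{t1.2} for $\widetilde G=G^\beta$. In case (i), the hypothesis $(D_{x_i}D_{x_j}(G^\beta u)^{1/(\beta+1)}+(G^\beta u)^{1/(\beta+1)}\delta_{ij})>0$ is literally condition (ii) written with $\widetilde G$ in place of $G$. In case (ii), $F=\sigma_n^{1/n}$, and the final clause of Theorem \ref{t1.2} allows condition (ii) to be dropped entirely, so no structural assumption on $G$ beyond smoothness is needed.

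To secure condition (iii), I would invoke the remark after Theorem \ref{t1.2}: on a sphere of radius $r$ centered at the origin one has $u\equiv r$ and $F\equiv r$ (using homogeneity of $F$ and the normalization $F(1,\dots,1)=1$ implicit in Assumption \ref{a1.1}), so that on such $M_0$,
\[
\widetilde G\,F^\beta\big|_{M_0}=G^\beta(x,rx)\,r^\beta=(G(x,rx)\,r)^\beta.
\]
By the $\limsup$/$\liminf$ hypothesis, choosing $r$ sufficiently large makes this quantity $<1$ on all of $\mathbb S^n$, while choosing $r$ sufficiently small makes it $>1$; either choice supplies an admissible initial hypersurface satisfying condition (iii).

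With all three conditions of Theorem \ref{t1.2} verified, the theorem produces a subsequence of the flow solutions $M_t$ converging in $C^\infty$ to a smooth, uniformly convex hypersurface whose support function satisfies $\widetilde G(x,X)F^\beta(D^2u+u\mathrm{I})=1$, equivalently $G(x,X)F(D^2u+u\mathrm{I})=1$ as claimed. No step here is genuinely difficult; the only point requiring care is keeping straight which power of $\beta$ is attached to $G$ versus to $F$ when one transfers between the two formulations, and confirming that condition (iii) of Theorem \ref{t1.2} is a condition on the existence of \emph{some} admissible $M_0$ rather than a constraint that needs to be arranged for every sphere.
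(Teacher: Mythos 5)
Your proposal is correct and follows exactly the route the paper intends: the corollary is obtained by applying Theorem \ref{t1.2} to the flow with $\widetilde G=G^\beta$, noting that condition (i) transfers under the strictly increasing map $t\mapsto t^\beta$ fixing $1$, that the corollary's hypothesis (i) is literally Theorem \ref{t1.2}(ii) for $\widetilde G$ (and is droppable when $F=\sigma_n^{1/n}$), and that condition (iii) is supplied by a sphere of sufficiently large or small radius, exactly as in the remark following Theorem \ref{t1.2}. No gaps.
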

\textbf{Remark}: (1) If $F=\sigma_{n}^\frac{1}{n}$, our assumption is weaker than the condition in \cite{LL}. Note that  the power of curvature function is different between this theorem and that in \cite{LL}. Compared with \cite{LL}, we extend $\sigma_{n}$ and $f(x)\varphi(u)G(X)$ to general $F$ and $G(x,X)$ respectively.

(2) If $G=\psi u^{\a-1}\rho^\delta$ and $F=\sigma_{k}^\frac{1}{k}$, the problem can be converted into the $L^p$ dual Christoffel-Minkowski problem considered in \cite{DL3}, i.e.
\begin{equation}\label{1.12}
\sigma_{k}(D^2u+u\Rmnum{1})=\psi(x)u^{p-1}(u^2+\vert Du\vert^2)^\frac{k+1-q}{2}.
\end{equation}
We derive an existence result of this problem with $p>q$, $p>1$. If $F=\sigma_{n}^\frac{1}{n}$, We derive an existence result of $L^p$ dual Minkowski problem with $p>q$. If we let $p$ or $q$ be a special value, our argument covers lots of well-known results, such as \cite{IM,HMS,LL,HZ,SWM}, partial results in \cite{DL3,BIS3,CL,LSW} etc..


On the other hand, Guan-Ren-Wang in \cite{GRW} raised a question whether the $C^2$ estimate of general prescribed curvature measure problem
\begin{equation}\label{19}
\sigma_{k}(\k)=G(\nu,X)
\end{equation}
can be derived. This problem with the general form $G(\nu,X)$ is a longstanding problem, has great significance and is worth considering. An example is given in \cite{GRW} that the $C^2$ estimate for $F=\frac{\sigma_{k}}{\sigma_{l}}$ fails, where $1\le l< k\le n$. Compared with Theorem 1.6 in \cite{GRW}, the existence of the solution to this equation can be obtained with additional condition (i) and curvature function satisfying Assumption \ref{a1.1}. Since the hypersurfaces $M_t$ are strictly convex, condition (\rmnum{1}) is necessary by Remark (2) of Theorem \ref{t1.2}. Thus we give a partial answer to the question raised in \cite{GRW}, i.e., we have the following corollary about prescribed curvature measure problem.

\begin{corollary}\label{c1.6}
Suppose $F(\l)$ and $G$ satisfy the assumptions of Corollary \ref{t1.5}, then equation
\begin{equation*}
F_*(\k)=G(\nu,X)
\end{equation*}
has a strictly convex, smooth solution.
\end{corollary}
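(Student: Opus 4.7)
The plan is to recognize the statement as a direct reformulation of Corollary \ref{t1.5}, exploiting the duality between $F$ and $F_*$ together with the standard support-function description of a smooth, strictly convex closed hypersurface.

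First I would pass to the inverse Gauss parameterization $X:\mS^n\to\mR^{n+1}$ of any smooth, closed, strictly convex hypersurface $\p K$ enclosing the origin, under which $\nu=x$, $u(x)=\la X,x\ra$, and $X=Du+ux$. The eigenvalues of $D^2u+u\Rmnum{1}$ are then the principal curvature radii $\l_i=1/\k_i$, where $\k_i$ are the principal curvatures of $\p K$ at $X(x)$. By the very definition of the dual function,
\begin{equation*}
F_*(\k_1,\cdots,\k_n)=\dfrac{1}{F(1/\k_1,\cdots,1/\k_n)}=\dfrac{1}{F(\l)},
\end{equation*}
so the prescribed-curvature equation $F_*(\k)=G(\nu,X)$ is equivalent to
\begin{equation*}
G(x,X)\,F(D^2u+u\Rmnum{1})=1\quad\text{on }\mS^n,
\end{equation*}
which is precisely equation (\ref{1.16}) with $c=1$.

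Second, since the hypotheses on $F$ and $G$ are by assumption exactly those of Corollary \ref{t1.5}, invoking that corollary (case (\rmnum{1}) with the prescribed $\beta$, or case (\rmnum{2}) when $F=\sigma_n^{1/n}$) produces a positive, smooth, uniformly convex support function $u$ solving the displayed PDE. The hypersurface $\p K=X(\mS^n)$ with $X=Du+u\nu$ is then the desired strictly convex smooth solution of $F_*(\k)=G(\nu,X)$.

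There is no genuine analytic obstacle at this stage: the entire content of the argument is the dictionary $F_*(\k)=1/F(\l)$, which is built into the definition of $F_*$. The real substance lies upstream in Corollary \ref{t1.5}, whose flow-based derivation from Theorem \ref{t1.2} circumvents the well-known failure of the direct $C^2$ estimate for quotient-type $F_*=\sigma_k/\sigma_l$ mentioned in \cite{GRW}, at the price of the structural hypothesis (\rmnum{1}) on $G$ (or the specialization $F=\sigma_n^{1/n}$).
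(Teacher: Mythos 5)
Your proposal is correct and is precisely the intended argument: the paper states Corollary \ref{c1.6} as an immediate reformulation of Corollary \ref{t1.5} via the identity $F_*(\k)=1/F(\l)$ with $\l_i=1/\k_i$, which converts $F_*(\k)=G(\nu,X)$ into equation (\ref{1.16}) with $c=1$. Nothing further is needed.
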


The rest of the paper is organized as follows. We first recall some notations and known results in Section 2 for later use. In Section 3,
we establish the $C^2$ estimates for a large ranges of parabolic equations. In Section 4, by establishing the $C^0$ estimate and the a priori estimate, we then show the convergence of these flows and complete the proofs of Theorems \ref{1.3}, \ref{1.5} and \ref{1.6}. In Section 5, we provide two uniqueness results in some special cases, and complete the proof of Corollary \ref{1.4}. Finally, in Section 6, we first derive the variational formula of the modified quermassintegrals to prove Theorem \ref{1.8}. As applications of Theorems \ref{1.5} and \ref{1.6}, inequalities in Corollary \ref{t1.8} will be proved, and more special cases are discussed.

\section{Preliminary}
\subsection{Intrinsic curvature}
We now state some general facts about hypersurfaces. Quantities for hypersurface $M$ will be denoted by $(g_{ij})$, $(R_{ijkl})$ etc., where Latin indices range from $1$ to $n$.

Let $\nabla$, $\bar\nabla$ and $D$ be the Levi-Civita connection of $g$, the standard metric of $\mR^{n+1}$ and the Riemannian metric $e$ of $\mathbb S^n$  respectively. All indices appearing after the semicolon indicate covariant derivatives. The $(1,3)$-type Riemannian curvature tensor is defined by
\begin{equation*}\label{2.1}
	R(U,Y)Z=\nabla_U\nabla_YZ-\nabla_Y\nabla_UZ-\nabla_{[U,Y]}Z,
\end{equation*}
and in a local frame $\{e_i\}$,
\begin{equation*}\label{2.2}
	R(e_i,e_j)e_k={R_{ijk}}^{l}e_l,
\end{equation*}
where we use the summation convention (and will henceforth do so). The so-called Ricci identity, reads
\begin{equation*}\label{2.3}
	Y_{;ij}^k-Y_{;ji}^k=-{R_{ijm}}^kY^m
\end{equation*}
for all vector fields $Y=(Y^k)$. We also denote the $(0,4)$ version of the curvature tensor by $R$,
\begin{equation*}\label{2.4}
	R(W,U,Y,Z)=g(R(W,U)Y,Z).
\end{equation*}
\subsection{Extrinsic curvature}
The induced geometry of $M$ is governed by the following relations. The second fundamental form $h=(h_{ij})$ is given by the Gaussian formula
\begin{equation*}\label{2.5}
	\bar\nabla_ZY=\nabla_ZY-h(Z,Y)\nu,
\end{equation*}
where $\nu$ is a local outer unit normal field. Note that here (and in the rest of the paper) we will abuse notation by disregarding the necessity to distinguish between a vector $Y\in T_pM$ and its push-forward $X_*Y\in T_p\mathbb{R}^{n+1}$. The Weingarten endomorphism $A=(h_j^i)$ is given by $h_j^i=g^{ki}h_{kj}$, and the Weingarten equation
\begin{equation*}\label{2.6}
	\bar\nabla_Y\nu=A(Y),
\end{equation*}
holds there, or in coordinates
\begin{equation*}\label{2.7}
	\nu_{;i}^\alpha=h_i^kX_{;k}^\alpha.
\end{equation*}
We also have the Codazzi equation in $\mathbb{R}^{n+1}$
\begin{equation*}\label{2.8}
	\nabla_Wh(Y,Z)=\nabla_Zh(Y,W)
\end{equation*}
or
\begin{equation*}\label{2.9}
	h_{ij;k}=h_{ik;j},
\end{equation*}
and the Gauss equation
\begin{equation*}\label{2.10}
	R(W,U,Y,Z)=h(W,Z)h(U,Y)-h(W,Y)h(U,Z)
\end{equation*}
or
\begin{equation*}\label{2.11}
	R_{ijkl}=h_{il}h_{jk}-h_{ik}h_{jl}.
\end{equation*}

\subsection{Convex hypersurface parametrized by the inverse Gauss map}
Let $M$ be a smooth, closed and uniformly convex hypersurface in $\mR^{n+1}$. Assume that $M$ is parametrized by the inverse Gauss map $X: \mS^n\to M\subset \mR^{n+1}$ and encloses origin. The support function $u: \mS^n\to \mR^1$ of $M$ is defined by $$u(x)=\sup_{y\in M}\la x,y\ra.$$ The supremum is attained at a point $y=X(x)$ because of convexity, $x$ is the outer normal of $M$ at $y$. Hence $u(x)=\la x,X(x)\ra$. Then we have
\begin{equation*}
X=Du+ux\quad \text{    and    } \quad\rho=\sqrt{u^2+|Du|^2}.
\end{equation*}

The second fundamental form of $M$ in terms of $u$ is given by
$$h_{ij}=D_iD_ju+ue_{ij},$$
and the principal radii of curvature are the eigenvalues of the matrix
\begin{equation}
	b_{ij}=h^{ik}g_{jk}=h_{ij}=D_{ij}^2u+u\delta_{ij}.
\end{equation}
The proof can be seen in Urbas \cite{UJ}. We see that flow (\ref{1.2}) is equivalent to the following initial value problem of the support function $u$ by \cite{DL} Section 2,
\begin{equation}\label{2.19}
	\begin{cases}
		&\frac{\p u}{\p t}=\(GF^\beta([D^2u+u\uppercase\expandafter{\romannumeral1}])-1\)u\; \text{ on } \mS^n\times[0, T^*),\\
		&u(\cdot,0)=u_0,
	\end{cases}
\end{equation}
where $\uppercase\expandafter{\romannumeral1}$ is the identity matrix, and $u_0$ is the support function of $M_0$. For any $F$ satisfying
Assumption \ref{a1.1}, there holds
\begin{equation}
	[F^{ij}]>0 \text{ on } \Gamma^+,
\end{equation}
which yields that the equation (\ref{2.19}) is parabolic for admissible solutions. We also have the formula
\begin{equation}\label{2.22}
\frac{\p_t\rho}{\rho}(\xi)=\frac{\p_tu}{u}(x),
\end{equation}
which can be also found in \cite{LSW,CL}.

It is well-known that the determinants of the Jacobian of radial Gauss mapping $\sA$ and reverse radial Gauss mapping $\sA^*$ of a convex body $\Om$ in $\mR^{n+1}$ are given by, see e.g. \cite{HLY2,CCL,LSW},
\begin{equation}\label{2.27}
	\vert Jac\sA\vert(\xi)=\vert\frac{dx}{d\xi}\vert=\frac{\rho^{n+1}(\xi)K(\vec{\rho}(\xi))}{u(\sA(\xi))},
\end{equation}
where $\vec{\rho}(\xi)=\rho(\xi)\xi$, and
\begin{equation}\label{2.24}
	\vert Jac\sA^*\vert(x)=\vert\frac{d\xi}{dx}\vert=\dfrac{u(x)}{\rho^{n+1}(\sA^*(x))K(\nu^{-1}_\Om(x))}.
\end{equation}

Before closing this section, we list the following basic properties for any given $\Om$ containing the origin.
\begin{lemma}\cite{CL}\label{l2.2}
Suppose $\Om$ contains the origin. Let $u$ and $\rho$ be the support function and radial function of $\Om$, and $x_{\max}$ and $\xi_{\min}$ be two points such that $u(x_{\max})=\max_{\mS^n}u$ and $\rho(\xi_{\min})=\min_{\mS^n}\rho$. Then
\begin{align}
\max_{\mS^n}u=\max_{\mS^n}\rho\quad \text{    and    }\quad \min_{\mS^n}u=\min_{\mS^n}\rho,\label{2.29}\\
u(x)\ge x\cdot x_{\max}u(x_{\max}),\quad \forall x\in\mS^n,\label{2.30}\\
\rho(\xi)\xi\cdot\xi_{\min}\le\rho(\xi_{\min}),\quad \forall \xi\in\mS^n.\label{2.31}
\end{align}
\end{lemma}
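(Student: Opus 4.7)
The plan is to build all three assertions from two elementary observations. \emph{First}, that $\rho\le u$ pointwise on $\mS^n$: since $\rho(x)x\in\partial\Omega\subset\Omega$, the definition of the support function immediately gives $u(x)=\sup_{y\in\Omega}\langle x,y\rangle\ge\langle x,\rho(x)x\rangle=\rho(x)$. \emph{Second}, that the closed ball $B(0,\min_{\mS^n}\rho)$ is contained in $\Omega$: any $y$ with $0<|y|\le\min_{\mS^n}\rho$ satisfies $|y|\le\rho(y/|y|)$, so it lies on or inside $\partial\Omega$ along that radial ray.

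For the equalities in (\ref{2.29}), the inequality $u\ge\rho$ gives $\max u\ge\max\rho$ and $\min u\ge\min\rho$ for free. The reverse direction on the maximum is Cauchy--Schwarz: for any $y\in\Omega$ one has $|y|=\rho(y/|y|)\le\max\rho$, hence $u(x)=\sup_y\langle x,y\rangle\le\max\rho$. The reverse direction on the minimum is where convexity enters: since $B(0,\min\rho)\subset\Omega$ and the two convex bodies share the boundary point $y^{*}=\rho(\xi_{\min})\xi_{\min}$, any supporting hyperplane of $\Omega$ at $y^{*}$ must also support the ball, forcing its outward normal to point in the direction $\xi_{\min}$. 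Therefore $\Omega\subset\{y:\langle\xi_{\min},y\rangle\le\rho(\xi_{\min})\}$, which yields $u(\xi_{\min})\le\rho(\xi_{\min})$ and hence $\min u\le\min\rho$.

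Inequality (\ref{2.31}) then drops out immediately: for every $\xi\in\mS^n$ we have $\rho(\xi)\xi\in\Omega$, so the containment just derived gives $\langle\xi_{\min},\rho(\xi)\xi\rangle\le\rho(\xi_{\min})$, which is exactly the claimed bound.

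For (\ref{2.30}) I would locate a specific point of $\Omega$ realizing the support $u(x_{\max})$. By compactness there exists $y_{0}\in\Omega$ with $\langle x_{\max},y_{0}\rangle=u(x_{\max})$. Since $u(x_{\max})=\max u=\max\rho\ge|y_{0}|$ while Cauchy--Schwarz gives $\langle x_{\max},y_{0}\rangle\le|y_{0}|$, both inequalities must be equalities, forcing $y_{0}=|y_{0}|x_{\max}=u(x_{\max})x_{\max}$. Substituting this particular $y_{0}\in\Omega$ back into the definition of $u$ yields $u(x)\ge\langle x,y_{0}\rangle=u(x_{\max})(x\cdot x_{\max})$ for every $x\in\mS^{n}$. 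The only step in the whole argument that is not pure bookkeeping with Cauchy--Schwarz is the supporting-hyperplane claim used for $\min u\le\min\rho$, and this is a standard consequence of the inclusion $B(0,\min\rho)\subset\Omega$ together with the convexity of $\Omega$.
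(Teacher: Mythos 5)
Your proof is correct. The paper itself gives no argument for this lemma --- it is simply quoted from the reference [CL] --- so there is nothing internal to compare against; your write-up supplies a complete, elementary and standard justification (pointwise $\rho\le u$, the inclusion $B(0,\min\rho)\subset\Om$, the supporting-hyperplane argument at $\rho(\xi_{\min})\xi_{\min}$ for $\min u\le\min\rho$ and for (\ref{2.31}), and the Cauchy--Schwarz equality case identifying $u(x_{\max})x_{\max}\in\Om$ for (\ref{2.30})). All steps check out; the only unstated triviality is the degenerate case $\Om=\{0\}$, which is excluded anyway since $\Om$ is a convex body.
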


\section{Curvature estimates for general cases}
In this section, we shall establish the $C^2$ estimates for large ranges of parabolic equations. We hope that these estimates can be used more widely. We only need to derive the $C^0$ estimates to get the long-time existence in future work. We give an assumption about curvature function.
\begin{assumption}\label{a3.1}
	Let $\Gamma_+=\{(\l_i)\in\mathbb{R}^n:\l_i>0\}$ be a symmetric, convex, open cone.
	
(\rmnum{1}) Suppose that $F$ is positive in $\Gamma_+$, homogeneous of degree $1$ with $F^\beta(1,\cdots,1)=1,$
	$\frac{\p F}{\p\l_i}>0;$

(\rmnum{2}) Suppose $F$ is inverse concave.
\end{assumption}

Firstly, we shall derive the $C^1$ estimates if we have $C^0$ estimates. Throughout the paper, we say that $u\in C^2(\mS^n)$ is uniformly convex if the matrix $[D^2u+u\Rmnum{1}]$ is positive-definite. Given a smooth positive function $u$ on $\mS^n$ such that $[D^2u+u\Rmnum{1}]$ is positive-definite, there is a unique smooth strictly convex hypersurface $M$ given by
$$M=\{X(x)\in\mR^{n+1}|X(x)=u(x)x+Du(x),x\in\mS^n\},$$
whose support function is $u$. For a family of parabolic equations of $u(\cdot,t)$, we can consider hypersurfaces $M_t$ whose support functions are $u(\cdot,t)$. Thus the uniformly bound of $|Du(x,t)|$ is derived directly by Lemma \ref{l2.2} and $\rho=\sqrt{u^2+\vert Du\vert^2}$  if we have the uniformly positive bounds of $u$ from above and below.

Next we have the bounds of curvature function $F$.
\begin{lemma}\label{l3.3}
	Let $u(\cdot,t)$ be a positive, smooth and uniformly convex solution to
	\begin{equation*}
		\frac{\p u}{\p t}=\phi(t)uG(x,u,Du)F^\beta(D^2u+u\Rmnum{1})-\eta(t)u\quad\text{ on }\mS^n\times[0,T),
	\end{equation*}
with $\beta>0$, where $G(x,u,Du): \mS^n\times(0,\infty)\times\mR^n\rightarrow(0,\infty)$ is a smooth function, and $F$ satisfies (\rmnum{1}) of Assumption \ref{a3.1}. If
	\begin{align*}
		\frac{1}{C_0}\le u(x,t)\le C_0,\qquad\qquad &\forall(x,t)\in\mS^n\times[0,T),\\
		\eta(t)\ge	\frac{1}{C_0},\qquad\qquad&\forall t\in[0,T),\\
		0\le\phi(t)\le C_0,\qquad \qquad&\forall t\in[0,T),
	\end{align*}
	for some constant $C_0>0$, then
	\begin{equation*}
		F(D^2u+u\Rmnum{1})(\cdot,t)\ge C^{-1},\qquad \forall t\in[0,T),
	\end{equation*}
	where $C$ is a positive constant depending only on $\beta,C_0,u(\cdot,0)$, $\vert G\vert_{L^\infty(U)}$, $\vert1/G\vert_{L^\infty(U)}$, $\vert G\vert_{C^1_{x,u,Du}(U)}$ and $\vert G\vert_{C^2_{x,u,Du}(U)}$ where $U=\mS^n\times[1/C_0,C_0]\times B^n_{C_0}$ ($B^n_{C_0}$ is the ball
	centered at the origin with radius $C_0$ in $\mR^n$).
\end{lemma}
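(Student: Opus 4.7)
The plan is to apply the parabolic maximum principle to an auxiliary function coupling $F$ with the support function $u$, so that an upper bound on the auxiliary forces the desired lower bound on $F$. A natural choice is
\[
\Psi(x,t)\;:=\;-\log F\bigl(D^{2}u(x,t)+u(x,t)\Rmnum{1}\bigr)+A\,u(x,t),
\]
with $A>0$ a constant to be chosen in terms of $\beta$, $C_{0}$, and the $C^{2}$-norms of $G$ on the compact set $\bar U$. Since $u\le C_{0}$, any upper bound $\Psi\le C^{*}$ yields $F\ge e^{-C^{*}-AC_{0}}$, which is the claim. The value $\Psi(\cdot,0)$ is controlled by the initial data $u(\cdot,0)$, so the substantive step is to rule out an interior space-time maximum of $\Psi$ at $(x_{0},t_{0})$ with $t_{0}>0$, unless $\Psi(x_{0},t_{0})\le C^{*}$ already.

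At such a maximum, the first-order condition $D\Psi=0$ yields the critical-point identity $D_{i}F=A\,F\,D_{i}u$, while the second-order condition $[D^{2}\Psi]\le 0$, contracted with the positive-definite matrix $[F^{ij}]=[\partial F/\partial b_{ij}]$ and simplified using this identity to handle the $DF\otimes DF$ term, gives
\[
F^{ij}D_{i}D_{j}F\;\ge\;AF\,F^{ij}D_{i}D_{j}u+A^{2}F\,F^{ij}D_{i}u\,D_{j}u.
\]
Next I would differentiate the equation $u_{t}=u(\phi GF^{\beta}-\eta)$ twice in $x$, contract with $[F^{ij}]$, and use the $1$-homogeneity identity $F^{ij}b_{ij}=F$ to compute $F_{t}=F^{ij}(D_{i}D_{j}u_{t}+u_{t}\delta_{ij})$, with $D_{i}G$ and $D_{i}D_{j}G$ expanded via the chain rule $D_{i}G=G_{x_{i}}+G_{u}D_{i}u+G_{p_{k}}D_{i}D_{k}u$. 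Substituting $D_{i}F=AFD_{i}u$ eliminates the first-order $F$-derivatives and yields, schematically,
\[
F_{t}\;=\;(\phi GF^{\beta}-\eta)\,F+u\phi G\beta F^{\beta-1}\,F^{ij}D_{i}D_{j}F+R(x,t),
\]
where $R$ collects remainder terms bounded by a constant depending only on $A,\beta,C_{0}$, and $\|G\|_{C^{2}(\bar U)}$. Combining the interior-max condition $\Psi_{t}\ge 0$ (equivalently $F_{t}/F\le Au\,V$, where $V=\phi GF^{\beta}-\eta$) with the Hessian inequality above and the lower bounds $\eta\ge 1/C_{0}$, $u\ge 1/C_{0}$, and choosing $A>C_{0}$ so that $Au-1\ge A/C_{0}-1>0$, one arrives at an inequality of the form
\[
(Au-1)\,\eta\;\le\;(Au-1)\,\phi GF^{\beta}+(\text{bounded remainder})
\]
at $(x_{0},t_{0})$. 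For $A$ large enough, the constant left-hand side exceeds the bounded remainder, forcing $\phi GF^{\beta}\gtrsim 1$ and hence $F$ bounded below by a positive constant with the claimed dependencies.

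The main obstacle is the algebraic control of the remainder $R$. The critical contribution is $u\phi F^{\beta}F^{ij}D_{i}D_{j}G$, which upon expansion produces a term of shape $F^{ij}G_{p_{k}p_{l}}D_{i}D_{k}u\,D_{j}D_{l}u$: this is quadratic in $D^{2}u$ and therefore of the same differential order as the principal parabolic term $u\phi G\beta F^{\beta-1}F^{ij}D_{i}D_{j}F$. Controlling it requires the uniform $C^{2}$ bound on $G$ over $\bar U$ together with a Cauchy--Schwarz absorption into the good positive term $A^{2}F\,F^{ij}D_{i}u\,D_{j}u$ produced by the Hessian inequality, at the cost of taking $A$ large. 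A secondary subtlety is that no regularity is assumed on $\phi$ or $\eta$ in $t$, so the argument must be pointwise in $t$ and may not use $\phi'$ or $\eta'$; fortunately the spatial maximum-principle argument respects this. The case $\beta\ge 1$ is essentially elementary since $F^{\beta-1}$ remains bounded as $F\to 0$, whereas the case $0<\beta<1$ is delicate because $F^{\beta-1}\to\infty$ in the remainder, and its blow-up must be balanced precisely against the $F\,F^{ij}D_{i}u\,D_{j}u$ term arising from the Hessian inequality.
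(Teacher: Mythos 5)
Your overall strategy (maximum principle applied to an auxiliary function, with the zeroth--order term $Au$ or $-A\rho^2/2$ used to generate a good sign) is the right genre of argument, but the choice of $-\log F$ as the leading quantity creates a gap that your proposed fix does not close. You correctly identify that differentiating the equation twice produces the term $u\phi F^{\beta}F^{ij}G_{p_kp_l}D_iD_ku\,D_jD_lu$, which is quadratic in $D^2u$. You then claim it can be absorbed by Cauchy--Schwarz into $A^2F\,F^{ij}D_iu\,D_ju$. This cannot work: the latter term is quadratic only in the \emph{gradient} $Du$, which is bounded by the $C^0$ estimate and Lemma \ref{l2.2}, whereas the former is quadratic in the \emph{Hessian}, for which no upper bound is available at this stage of the argument --- the $C^2$ bound (Lemma \ref{l3.5}) is proved \emph{after} and \emph{using} the two-sided bounds on $F$, so it cannot be invoked here. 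A small value of $F$ at the maximum point of your $\Psi$ does not force $D^2u$ to be bounded (one principal radius can be tiny while others are huge), so for general smooth $G(x,u,Du)$ this term has no sign and no bound, and no choice of $A$ rescues the absorption. The same issue, in linear-in-$D^2u$ form, already contaminates the cross terms $F^{\beta-1}F^{ij}(uG)_iF_j$ through $G_{p_k}D_iD_ku$.

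The paper's proof avoids all of this by estimating the speed rather than the curvature function: it applies the maximum principle to $\theta=\log Q-A\rho^2/2$ with $Q=uGF^{\beta}$ and the operator $\cL=\p_t-\frac{\beta\phi QF^{ij}}{F}D^2_{ij}$. Because $\p_t h_{ij}=\phi Q_{ij}-\eta h_{ij}+\phi Q\delta_{ij}$, the full second derivative $Q_{ij}$ (inside which all second derivatives of $G$ and all terms quadratic in $D^2u$ are packaged) cancels exactly against the elliptic operator, leaving only $\frac{\phi Q}{u}-(1+\beta)\eta+\frac{d_XG(\p_tX)}{G}$ plus manifestly nonnegative terms; the remaining first-order contributions of $G$ are evaluated along controlled directions ($x$, $Du$, $\rho D\rho$) using the critical-point identity $DQ/Q=A\rho D\rho$. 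A lower bound on $\theta$ then gives a lower bound on $Q$, hence on $F$ since $u$ and $G$ are bounded. If you want to keep your framework, the essential repair is to replace $-\log F$ by $\log(uGF^{\beta})$ (or at least $\log(GF^{\beta})$), so that the dangerous second derivatives of $G$ never appear explicitly; your concern about the cases $\beta\ge1$ versus $0<\beta<1$ also evaporates under that change.
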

\begin{proof}
We consider hypersurfaces $M_t$ whose support function are $u(\cdot,t)$ and use $G=G(x,X)$ in this proof since $G(x,X)$ can be regarded as $G(x,u,Du)$.	We will apply the maximum principle to the auxiliary function
	$$\theta=\log Q-A\frac{\rho^2}{2},$$
	where $Q=uGF^\beta$, $A>0$ is a constant to be determined later. Let  $\cL=\p_t-\frac{\beta \phi(t)QF^{ij}}{F}D^2_{ij}$.
 Then we get
	\begin{equation}\label{3.1}
		\begin{split}
\cL\log Q=&\frac{\p_t Q}{Q}-\frac{\beta\phi(t) F^{ij}}{F}\(Q_{ij}-\frac{Q_iQ_j}{Q}\)\\
=&\frac{\phi Q-\eta u}{u}+\frac{d_XG(\p_tX)}{G}+\frac{\beta F^{ij}}{F}(-\eta h_{ij}+\phi Q\delta_{ij})+\frac{\beta\phi(t) F^{ij}}{F}\frac{Q_iQ_j}{Q}\\
\ge&\frac{\phi Q}{u}-(1+\beta)\eta+\frac{d_XG(\p_tX)}{G},
		\end{split}
	\end{equation}
	where we have used that $[F^{ij}]$ is positive-definite in the last inequality.
\begin{equation}\label{3.2}
	\begin{split}
		\cL(-A\frac{\rho^2}{2})=&-A\cL(\frac{u^2+\vert Du\vert^2}{2})\\
		=&-Au\p_tu-AD^muD_m\p_tu\\
		&+\frac{A\beta \phi(t)QF^{ij}}{F}(uu_{ij}+u_iu_j+u_{mij}u_m+u_{mi}u_{mj})\\
		=&-Au\phi Q+A\eta\rho^2-A\phi F^\beta D^mu(uG)_m-A\beta\phi\frac{Q}{F}D^muF_m\\
		&+\frac{A\beta \phi(t)QF^{ij}}{F}\(uh_{ij}-u^2\delta_{ij}+u_iu_j+(D_mh_{ij}-u_j\delta_{mi})u_m\\
		&+(h_{mi}-u\delta_{mi})(h_{mj}-u\delta_{mj})\)\\
\ge&-A(\beta+1)u\phi Q+A\eta\rho^2-A\phi\frac{Q}{u}\vert Du\vert^2-A\phi\frac{Q}{G}D^mud_XG(\bar\nn_mX)\\
&-A\phi\frac{Q}{G}D^mud_xG(e_m),
	\end{split}
\end{equation}
where we have used $\rho^2=u^2+\vert Du\vert^2$ and again $[F^{ij}]$ is positive-definite in the last inequality. Note that
\begin{align}\label{3x}
D^mu\bar\nn_mX=D^mu\bar\nn_m(Du+ux)=D^muh_{mk}e_k=\rho D\rho(x)
\end{align}
by $D_i\rho(x)=\frac{uu_i+u_ku_{ki}}{\rho}=\frac{u_kh_{ki}}{\rho}$, 
 and that
\begin{align}\label{3.4}
\p_tX=\p_t(ux+Du)=(\phi Q-\eta u)x+D(\phi Q-\eta u)=(\phi Q-\eta u)x+\phi QA\rho D\rho-\eta Du
\end{align}
by $D\theta=\frac{DQ}{Q}-A\rho D\rho=0$.

Combining (\ref{3.1}), (\ref{3.2}), (\ref{3x}) and (\ref{3.4}),  we obtain
	\begin{equation}\label{3.5}
		\begin{split}
\cL\theta\ge&\frac{\phi Q}{u}-(1+\beta)\eta+\frac{d_XG\((\phi Q-\eta u)x+\phi QA\rho D\rho-\eta Du\)}{G}-A(\beta+1)u\phi Q\\
&+A\eta\rho^2-A\phi\frac{Q}{u}\vert Du\vert^2-A\phi\frac{Q}{G}d_XG\(\rho D\rho(x)\)-A\phi\frac{Q}{G}D^mud_xG(e_m)\\
\ge&\frac{\phi Q}{u}-(1+\beta)\eta-C_2(\phi Q+\eta)
-A(\beta+1)u\phi Q+A\eta\rho^2-A\phi\frac{Q}{u}\vert Du\vert^2-AC_3\phi\frac{Q}{G}\\
			=&\(\frac{A}{2}\eta\rho^2+\frac{\phi Q}{u}-C_2\phi Q-(1+\beta+C_2)\eta\)\\
			&+A\(\frac{\eta\rho^2}{2}-(1+\beta)u\phi Q-\phi\frac{Q}{u}\vert Du\vert^2-C_3\phi\frac{Q}{G}\),
		\end{split}
	\end{equation}
where we have used $|d_XG(x)|,|d_XG(Du)|,|d_XG(Du)|\le C_1$ by $C^0$ and $C^1$ estimates.	Note that $F$ and $\theta$ have the same monotonicity. Choose $A>\frac{2}{\min\rho^2}(1+C_2+\beta)$. Thus if $\theta$ tends to $-\infty$ then $Q$ tends to $0$ and the right-hand side becomes positive. This completes the proof by the maximum principle.
\end{proof}
\begin{lemma}\label{l3.4}
	Assume $\beta>0$. Let $u(\cdot,t)$ be a positive, smooth and uniformly convex solution to
	\begin{equation*}
		\frac{\p u}{\p t}=\phi(t)uG(x,u,Du)F^\beta(D^2u+u\Rmnum{1})-\eta(t)u\quad\text{ on }\mS^n\times[0,T),
	\end{equation*}
	where $G(x,u,Du):\mS^n\times(0,\infty)\times\mR^n\rightarrow(0,\infty)$ is a smooth function, $F$ satisfies Assumption \ref{a3.1}. If
	\begin{align*}
		\frac{1}{C_0}\le u(x,t)\le C_0,\qquad\qquad &\forall(x,t)\in\mS^n\times[0,T),\\
		\eta(t)\le	C_0,\qquad\qquad&\forall t\in[0,T),\\
		\phi(t)\max_{\mS^n}F^\beta(D^2u+u\Rmnum{1})\ge \frac{1}{C_0},\qquad \qquad&\forall t\in[0,T),
	\end{align*}
	for some constant $C_0>0$, then
	\begin{equation*}
		F(D^2u+u\Rmnum{1})(\cdot,t)\le C,\qquad \forall t\in[0,T),
	\end{equation*}
	where $C$ is a positive constant depending only on $\beta,C_0,u(\cdot,0)$, $\vert G\vert_{L^\infty(U)}$, $\vert1/G\vert_{L^\infty(U)}$, $\vert G\vert_{C^1_{x,u,Du}(U)}$ and $\vert G\vert_{C^2_{x,u,Du}(U)}$ where $U=\mS^n\times[1/C_0,C_0]\times B^n_{C_0}$ ($B^n_{C_0}$ is the ball
	centered at the origin with radius $C_0$ in $\mR^n$).
\end{lemma}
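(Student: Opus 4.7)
The plan is to run the maximum-principle argument of Lemma \ref{l3.3} with the sign of the auxiliary function reversed, so as to produce an upper rather than a lower bound on $Q:=uGF^\beta$. Concretely, I would apply the maximum principle to
\begin{equation*}
\theta:=\log Q+\frac{A}{2}\rho^{2},
\end{equation*}
with $A>0$ a constant to be chosen (a Tso-type variant $\log Q-\log(u-c)$ with $c:=\tfrac{1}{2C_{0}}$ works equally well). At an interior spacetime maximum of $\theta$, the conditions $D\theta=0$, $D^{2}\theta\le 0$, and $\partial_{t}\theta\ge 0$ yield $\cL\theta\ge 0$, where $\cL=\partial_{t}-\tfrac{\beta\phi Q F^{ij}}{F}D^{2}_{ij}$ is exactly the operator of Lemma \ref{l3.3}. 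The aim is to show that the right-hand side of this inequality becomes strictly negative once $Q$ is large, thereby yielding the desired a priori upper bound on $F$.

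The computation of $\cL\log Q$ reproduces \eqref{3.1} verbatim, while $\cL(\tfrac{A}{2}\rho^{2})$ is the sign-reversed analogue of \eqref{3.2}: every term that appeared with a negative sign in \eqref{3.2} now has a positive sign, and vice versa. In particular the curvature term
\begin{equation*}
-\frac{A\beta\phi Q F^{ij}}{F}\,h_{mi}h_{mj},
\end{equation*}
which was simply discarded in Lemma \ref{l3.3} because it had a favorable sign there, now appears with an unfavorable sign and becomes the dominant contribution when $\lambda_{\max}(h)$ is large. The gradient condition $DQ/Q=-A\rho\,D\rho$ at the maximum converts the non-negative term $\tfrac{\beta\phi F^{ij}}{F}\tfrac{Q_{i}Q_{j}}{Q}$ into $A^{2}\beta\phi Q\,F^{ij}u^{p}u^{q}h_{pi}h_{qj}/F$, providing the only $A^{2}$-sized positive quantity available to balance against the bad curvature term.

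The main obstacle is precisely the control of $\tfrac{A\beta\phi QF^{ij}h_{mi}h_{mj}}{F}$; this is where the argument departs from Lemma \ref{l3.3} and where inverse concavity of $F$ is used in an essential way. In the eigenbasis of $h$ one has $F^{ij}h_{mi}h_{mj}=\sum_{i}F_{i}\lambda_{i}^{2}$, and a structural inequality of the form
\begin{equation*}
\sum_{i}F_{i}\lambda_{i}^{2}\;\ge\;c_{0}\,F\,\lambda_{\max}(h)
\end{equation*}
for some $c_{0}>0$ follows from Euler's identity together with the concavity of $F_{*}$ (this is the analogue, for 1-homogeneous inverse concave $F$, of the obvious bound $F_{1}\lambda_{1}\ge c_{0}F$ when $\lambda_{1}=\lambda_{\max}$). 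The hypothesis $\phi\,\max F^{\beta}\ge C_{0}^{-1}$ is used to rule out degeneracy of $\cL$ at the maximum. Collecting all terms yields at the maximum a schematic inequality
\begin{equation*}
0\;\le\;\cL\theta\;\le\;C_{1}(A+1)(1+Q)-c_{0}\,A\,\phi Q\,\lambda_{\max}(h),
\end{equation*}
with $C_{1},c_{0}$ depending only on $\beta,C_{0},u(\cdot,0)$ and the $C^{2}$ bounds on $G$ listed in the statement. Choosing $A$ sufficiently large forces a contradiction unless $\lambda_{\max}(h)$, and hence $F$, is a priori bounded at the maximum, which in turn bounds $F$ everywhere through the bound on $\theta$. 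The inverse-concavity step is the heart of the argument and is exactly what makes Assumption \ref{a3.1}(ii) indispensable for the upper bound while being dispensable for the lower bound.
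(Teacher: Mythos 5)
Your overall strategy---a maximum principle applied to a sign-reversed Tso-type quantity, with inverse concavity supplying the decisive curvature term---is the right one and matches the paper in spirit, but two of your key steps fail as stated. First, the structural inequality $\sum_iF_i\l_i^2\ge c_0F\l_{\max}$ is not a consequence of inverse concavity: take $F$ to be the harmonic mean $n(\sum_i\l_i^{-1})^{-1}$, whose dual $F_*(\k)=\frac1n\sum_i\k_i$ is linear, hence concave; then $\sum_iF_i\l_i^2=F^2$ while $F/\l_{\max}\to0$ along $\l=(t,1,\dots,1)$, $t\to\infty$. What inverse concavity actually yields (Andrews--McCoy--Zheng, the paper's \cite{B4}) is $F^{ij}h_{mi}h_{mj}\ge F^2$, and that is exactly what the paper uses: it produces a good term of order $-\phi F^{\beta+1}$, which bounds $F$ --- the actual conclusion of the lemma --- but cannot bound $\l_{\max}$. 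Bounding $\l_{\max}$ is the content of Lemma \ref{l3.5}, needs the extra hypothesis on $G$, and is neither required nor attainable here; your final ``contradiction unless $\l_{\max}(h)$ is bounded'' is therefore aiming at the wrong (and unreachable) target.

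Second, the test function $\log Q+\frac{A}{2}\rho^2$ with $A$ \emph{large} does not close. At the maximum, $DQ/Q=-A\rho D\rho$ turns the positive term $\frac{\beta\phi F^{ij}Q_iQ_j}{FQ}$ of (\ref{3.1}) into $A^2\beta\phi Q\,F^{ij}h_{ki}h_{lj}u_ku_l/F\le A^2|Du|^2\beta\phi Q\,F^{ij}(h^2)_{ij}/F$, which has the same structure as the helpful term $-A\beta\phi Q\,F^{ij}(h^2)_{ij}/F$ coming from $\cL(\frac{A}{2}\rho^2)$ but with coefficient $A^2|Du|^2$ against $A$; taking $A$ large makes the harmful term dominate. (Your bookkeeping reflects this confusion: the term you call ``bad'' and propose to ``balance'' is the one that must win, and the $A^2$ term you call ``available to balance'' is the obstruction.) Moreover $\cL\log Q$ contains $+\beta\phi Q\sum_iF^{ii}/F$ from the $\phi Q\delta_{ij}$ in (\ref{3.1}); $\sum_iF^{ii}$ is unbounded already for $F=\sigma_n^{1/n}$, and in $\cL(\frac{A}{2}\rho^2)$ the two $u^2\delta_{ij}$ contributions cancel, so no absorbing term of the form $-Au^2\sum_iF^{ii}/F$ survives. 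The paper avoids both difficulties by the choice $\theta=GF^\beta/(1-\eps\frac{\rho^2}{2})$ with $\eps$ small and fixed: the quotient kills the first-derivative cross terms at the critical point, the $\delta_{ij}$-terms recombine into $h_{ij}$ and contract to $F$ by homogeneity, and the spurious contributions are quadratic in $\eps$ while the good term is linear. Finally, when collecting errors one must separate terms carrying a factor $\phi F^\beta$ (absorbed by $-\phi F^{\beta+1}$ once $F$ is large, the $\phi$ cancelling) from the $O(1)$ terms, for which the hypothesis $\phi\max_{\mS^n}F^\beta\ge C_0^{-1}$ converts $-\phi F^{\beta+1}$ into $-cF$ at the maximum of $\theta$; your schematic $C_1(A+1)(1+Q)$ obscures this, and a $Q$-sized term without a $\phi$ factor could not be absorbed when $\beta>1$.
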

\begin{proof}
We consider hypersurfaces $M_t$ whose support function are $u(\cdot,t)$ and use $G=G(x,X)$ in this proof since $G(x,X)$ can be regarded as $G(x,u,Du)$.	We will apply the maximum principle to the auxiliary function
	$$\theta=\frac{GF^\beta}{1-\eps\frac{\rho^2}{2}},$$
	where $\eps\le\rho^2\le\frac{1}{\eps}$. Let  $Q=GF^\beta$, $\cL=\p_t-\frac{\beta \phi(t)uQF^{ij}}{F}D^2_{ij}$. Then
	$$(GF^\beta u)_{ij}=Q_{ij}u+Q_iu_j+Q_ju_i+Qu_{ij}.$$
Direct computation yields
	\begin{equation}\label{3.6}
		\begin{split}
			\p_tQ=&\frac{Q}{G}d_XG(\p_tX)+\frac{\beta Q}{F}F^{ij}\((\p_tu)_{ij}+\p_tu\delta_{ij}\)\\
=&\frac{Q}{G}d_XG(\p_tX)+\frac{\beta Q}{F}F^{ij}(\phi Q_{ij}u+\phi Q_iu_j+\phi Q_ju_i+(\phi Q-\eta)h_{ij})\\
=&\beta(\phi Q-\eta)Q+\frac{Q}{G}d_XG(\p_tX)+\frac{\beta \phi uQF^{ij}}{F}Q_{ij}+2\frac{\beta\phi Q}{F}F^{ij}Q_iu_j.
		\end{split}
	\end{equation}
Note that the definition of $Q$ in Lemma \ref{l3.3} is different from that in this lemma. By (\ref{3.2}) we have
	\begin{equation}\label{3.7}
		\begin{split}
\cL(1-\eps\frac{\rho^2}{2})=&-\eps u^2\phi Q+\eps\eta\rho^2-\eps\phi F^\beta D^mu(uG)_m-\eps\beta\phi\frac{uQ}{F}D^muF_m\\
&+\frac{\eps\beta \phi(t)uQF^{ij}}{F}\(uh_{ij}-u^2\delta_{ij}+u_iu_j+(D_mh_{ij}-u_j\delta_{mi})u_m\\
&+(h_{mi}-u\delta_{mi})(h_{mj}-u\delta_{mj})\)\\
=&-\eps(1+\beta)u^2\phi Q+\eps\eta\rho^2-\eps\phi Q\vert Du\vert^2-\eps\phi uF^\beta D^muD_mG\\
&+\frac{\eps\beta \phi(t)uQF^{ij}(h^2)_{ij}}{F}.
		\end{split}
	\end{equation}
	Since $F$ is an inverse concave function, we have $F^{ij}h_{mi}h_{mj}\ge F^2$ by \cite{B4}. At the maximum point, we have
	$$D\theta=\frac{DQ}{1-\eps\frac{\rho^2}{2}}+\frac{\eps\rho QD\rho}{(1-\eps\frac{\rho^2}{2})^2}=0.$$
	Combining (\ref{3x}), (\ref{3.4}), (\ref{3.6}) and (\ref{3.7}), similarly to the proof of Lemma \ref{l3.3} we have
	\begin{equation}\label{3.8}
		\begin{split}
			\frac{1-\eps\frac{\rho^2}{2}}{Q}\cL\theta=&\frac{\cL Q}{Q}-\frac{\cL(1-\eps\frac{\rho^2}{2})}{1-\eps\frac{\rho^2}{2}}\\
			=&\beta(\phi Q-\eta)+\frac{d_XG(\p_tX)}{G}-\frac{2\beta\eps\phi\rho Q}{F(1-\eps\frac{\rho^2}{2})}F^{ij}u_i\rho_j\\
			&-\dfrac{-\eps(1+\beta)u^2\phi Q+\eps\eta\rho^2-\eps\phi Q\vert Du\vert^2-\eps\phi uF^\beta D^muD_mG}{1-\eps\frac{\rho^2}{2}}\\
&-\dfrac{\eps\beta \phi(t)uQF^{ij}(h^2)_{ij}}{F(1-\eps\frac{\rho^2}{2})}\\
			\le&-C_{4}\phi F^{\beta+1}+C_{5}\phi F^{\beta}+C_{6}\\
\le&-C_7\phi F^{\beta+1}+C_{6}\\
\le&-C_8F+C_{6},
		\end{split}
	\end{equation}
where we have used $D_i\rho=\frac{uu_i+u_ku_{ki}}{\rho}=\frac{u_kh_{ki}}{\rho}$ again, $M_t$ is strictly convex, $[F^{ij}]$ is positive-definite, and the bounds of $u,\rho$ and $\vert Du\vert$. At the maximum point we can suppose that $\theta$ tends to infinity, i.e. $F$ tends to $\max_{\mS^n}F$. Thus we use $\phi F^\beta\ge C_0$ in the last inequality. By the maximum principle we complete the proof.
\end{proof}
\textbf{Remark:} The condition $\phi(t)\max_{\mS^n}F^\beta(D^2u+u\Rmnum{1})\ge C_0$ is weaker than $\phi(t)\ge C_0$.

Finally, we can derive the $C^2$ estimates. We use $G=G(t,x,X)$ in the following lemma to make conditions better represented since $G(t,x,X)$ can be regarded as $G(t,x,u,Du)$, where $X$ is denoted by $ux+Du$. Under this representation we consider that $X$ is independent of $x$.
\begin{lemma}\label{l3.5}
	Assume $\beta>0$. Let $u(\cdot,t)$ be a positive, smooth and uniformly convex solution to
	\begin{equation*}
		\frac{\p u}{\p t}=uG(t,x,X)F^\beta(D^2u+u\Rmnum{1})-\eta(t)u\quad\text{ on }\mS^n\times[0,T),
	\end{equation*}
	where $G(t,x,X): [0, \infty)\times\mS^n\times\mR^{n+1}\rightarrow(0,\infty)$ is a smooth function if $\la x,X\ra>0$, $F$ satisfies Assumption \ref{a3.1}, and $(D_{x_i}D_{x_j}(Gu)^\frac{1}{\beta+1}+(Gu)^\frac{1}{\beta+1}\delta_{ij})>0$. If
	\begin{align*}
		\frac{1}{C_0}\le u(x,t)\le C_0,\qquad\qquad &\forall(x,t)\in\mS^n\times[0,T),\\
		\eta(t)\le	C_0,\qquad\qquad&\forall t\in[0,T),\\
		\frac{1}{C_0}\le F(x,t)\le C_0,\qquad\qquad &\forall(x,t)\in\mS^n\times[0,T),
	\end{align*}
	for some constant $C_0>0$, then
	\begin{equation*}
		(D^2u+u\Rmnum{1})(\cdot,t)\ge \frac{1}{C},\qquad \forall t\in[0,T),
	\end{equation*}
	where $C$ is a positive constant depending only on $\beta,C_0,u(\cdot,0)$, $\vert G\vert_{L^\infty(U)}$, $\vert1/G\vert_{L^\infty(U)}$, $\vert G\vert_{C^1_{x,X}(U)}$ and $\vert G\vert_{C^2_{x,X}(U)}$ where $U=[0,T)\times\mS^n\times B^n_{C_0}$ ($B^n_{C_0}$ is the ball
	centered at the origin with radius $C_0$ in $\mR^n$).
\end{lemma}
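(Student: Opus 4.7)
The conclusion is a positive lower bound on the eigenvalues of $b_{ij}:=D_iD_ju+u\delta_{ij}$, equivalently an upper bound on the largest eigenvalue of the inverse matrix $b^{ij}:=(b_{ij})^{-1}$. I apply the maximum principle to an auxiliary function of the form
$$W(x,t,\xi)=\log\bigl(b^{ij}\xi_i\xi_j\bigr)+A\,\psi(x,t)+\tfrac{B}{2}\rho^2,$$
where $\xi\in T_x\mS^n$ is a unit vector, $\psi:=(Gu)^{1/(\beta+1)}$, and $A,B>0$ are large constants to be chosen. At a space-time maximum of $W$, by rotating the frame I may assume $b^{ij}$ is diagonal with $b^{11}$ its largest eigenvalue, reducing to $W=\log b^{11}+A\psi+\tfrac{B}{2}\rho^2$.

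From $\partial_t u=Q-\eta u$ with $Q:=uGF^\beta$, one derives $\partial_t b_{ij}=Q_{ij}+Q\delta_{ij}-\eta b_{ij}$, hence $\partial_t b^{11}=-(b^{11})^2\partial_t b_{11}$. Setting $\cL:=\partial_t-\beta u Q F^{-1}F^{kl}D^2_{kl}$, as in the proofs of Lemmas~\ref{l3.3}--\ref{l3.4}, I expand $Q_{11}$ and commute covariant derivatives using the Ricci and Codazzi identities on $\mS^n$. The piece of $Q_{11}$ coming from $F^\beta$ yields $\beta u G F^{\beta-1}F^{kl}b_{kl,11}$ plus second-order remainders that, via the inverse concavity of $F$ and the auxiliary bound $F^{kl}h_{mk}h_{ml}\geq F^2$ used in Lemma~\ref{l3.4}, are controlled by the gradient terms supplied by $\nabla W=0$ at the maximum. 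The dependence of $G(t,x,X)$ on $X=ux+Du$ produces first-order spherical terms $d_XG\cdot\bar\nabla_iX$ which, via the identity \eqref{3x}, are $C^1$-bounded and absorbed by $\tfrac{B}{2}\rho^2$ once $B$ is chosen large enough, exactly as in Lemmas~\ref{l3.3}--\ref{l3.4}.

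The delicate terms arise from the $x$-dependence of $G$ at fixed $X$, namely $u\,D_{x_i}D_{x_j}G$ and $u_i D_{x_j}G+u_j D_{x_i}G$ inside $(uG)_{ij}$. Writing $Gu=\psi^{\beta+1}$ gives the algebraic identity
$$D_iD_j(Gu)+(Gu)\delta_{ij}=(\beta+1)\psi^\beta\bigl(D_{x_i}D_{x_j}\psi+\psi\delta_{ij}\bigr)-\beta\psi^{\beta+1}\delta_{ij}+\beta(\beta+1)\psi^{\beta-1}D_i\psi\,D_j\psi,$$
so the structural hypothesis $(D_{x_i}D_{x_j}\psi+\psi\delta_{ij})>0$ provides a strictly positive contribution when contracted against the positive matrix $F^{kl}$. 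Including the term $A\psi$ in $W$ produces, through $\cL(A\psi)$, an additional negative contribution of the form $-A\beta u Q F^{-1}F^{kl}(D_kD_l\psi+\psi\delta_{kl})$ of exactly the right sign to dominate the remaining bad $(uG)_{ij}$-contribution once $A$ is taken large relative to the $C^2$ data of $\psi$. Choosing $A$ and then $B$ in this order and applying $\cL W\leq 0$ at the maximum forces $b^{11}(x_0,t_0)\leq C$, i.e., a uniform lower bound on the principal radii as claimed.

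\textbf{Main obstacle.} The principal difficulty is the mixed $(x,X)$-dependence of $G$: the $x$-dependence produces terms linear in $b^{11}$ with the wrong sign that cannot be absorbed by the concavity of $F^\beta$ alone, while the $X$-dependence couples back into $u$ and $Du$ and must be prevented from entering at second order. The structural hypothesis $(D_{x_i}D_{x_j}\psi+\psi\delta_{ij})>0$ on the ``mock support function'' $\psi=(Gu)^{1/(\beta+1)}$ is exactly what allows the additive $A\psi$-correction in $W$ to cancel the $x$-dependent bad terms; its necessity is consistent with the counterexample of Ivaki recalled in Remark~(2) following Theorem~\ref{t1.2}.
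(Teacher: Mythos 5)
Your proposal has the right general shape (maximum principle applied to the largest eigenvalue of $b^{ij}=(D^2u+u\,\Rmnum{1})^{-1}$, with the structural hypothesis on $\psi=(Gu)^{1/(\beta+1)}$ doing the essential work), and your algebraic identity expanding $D_iD_j(Gu)+(Gu)\delta_{ij}$ in terms of $\psi$ is correct. But the mechanism you propose for absorbing the second-derivative terms of $uG$ does not work, and this is a genuine gap. In the evolution of $h^{11}$ the dangerous contribution is $-(h^{11})^2\big(J\,D_1D_1(uG)+2D_1(uG)D_1J\big)$; after normalizing by $h^{11}$ it still grows \emph{linearly} in $h^{11}$, with a coefficient that is positive whenever $D_1D_1(uG)<0$. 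Your correction term $\cL(A\psi)=-A\beta uQF^{-1}F^{kl}D_kD_l\psi+\dots$ is, for a general inverse-concave $F$ (take $F=\sigma_1/n$, for which $F^{kl}=\delta_{kl}/n$ and $\sum_kF^{kk}\equiv1$), bounded independently of $h^{11}$. No choice of $A$ "large relative to the $C^2$ data of $\psi$" can dominate a term that is unbounded in $h^{11}$; the same objection applies to the $\tfrac{B}{2}\rho^2$ term. Moreover, the hypothesis $(D_{x_i}D_{x_j}\psi+\psi\delta_{ij})>0$ never gets "contracted against $F^{kl}$" in the evolution of $b^{11}$: only the single component $(uG)_{11}$ in the degenerating direction appears, so the positive-contraction argument you invoke has no term to act on. (You also cite the inequality $F^{kl}h_{mk}h_{ml}\ge F^2$, which is the one needed for the upper curvature bound of Lemma~\ref{l3.4}; the inverse-concavity input here is instead Urbas's inequality $(2F^{km}h^{nl}+F^{kl,mn})D_1h_{kl}D_1h_{mn}\ge 2F^{-1}F^{kl}F^{mn}D_1h_{kl}D_1h_{mn}$.)

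The paper's proof resolves exactly this difficulty by testing the quotient $h^{11}/u$ rather than adding lower-order corrections. Subtracting $\cL u/u$ cancels the otherwise uncontrollable term $\beta uGF^{\beta-1}\big(\sum_iF^{ii}\big)h^{11}$ exactly, and a Cauchy--Schwarz with the precise constant $\tfrac{\beta}{\beta+1}$ on the cross term $D_1(uG)D_1F$ leaves
\begin{equation*}
\cL\frac{h^{11}}{u}\le -uGF^\beta h^{11}\Big(\beta+1-\tfrac{\beta}{\beta+1}\tfrac{(D_{x_1}(uG))^2}{(uG)^2}+\tfrac{D_{x_1}D_{x_1}(uG)}{uG}\Big)+C,
\end{equation*}
after converting full derivatives $D_1$ into $x$-derivatives $D_{x_1}$ plus bounded terms (the chain-rule contributions through $X$ carry factors $h_{11}=1/h^{11}$ and are harmless). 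The bracket equals $(\beta+1)\psi^{-1}\big(D_{x_1}D_{x_1}\psi+\psi\big)$, which is bounded below by a positive constant precisely by the structural hypothesis, yielding $\p_t(h^{11}/u)\le -C_{11}h^{11}+C_{10}$ and hence the claim. So the hypothesis must enter as the coefficient of the good term $-h^{11}$, not as a bounded additive correction; to repair your argument you would need to replace $\log b^{11}+A\psi+\tfrac{B}{2}\rho^2$ by (the logarithm of) $b^{11}/u$ or an equivalent quotient and carry out the cancellation above.
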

\begin{proof}
We again consider hypersurface $M_t$ whose support function is $u(\cdot, t)$. We mention that under this representation we consider that $X$ is independent of $x$. Then the eigenvalues of $[D^2u+u\Rmnum{1}]$ are the principal radii of curvatures of $M_t$.
The principal radii of curvatures of $M_t$ are also the eigenvalues of $\{h_{il}e^{lj}\}$. To derive a positive lower bound of principal curvatures radii, it suffices to prove that the eigenvalues of $\{h^{il}e_{lj}\}$ are bounded from above. For this end, we consider the following quantity
	$$W(x,t)=\max\{\frac{h^{ij}}{u}(x,t)\xi_i\xi_j: e^{ij}(x)\xi_i\xi_j=1\}.$$
	
	Fix an arbitrary $T'\in(0,T)$ and assume that $W$ attains its maximum on $\mS^n\times[0,T']$ at $(x_0,t_0)$ with $t_0>0$ (otherwise $W$ is bounded by its initial value and we have done). We choose a local orthonormal frame $e_1,\cdots,e_n$ on $\mS^n$ such that at $X(x_0,t_0)$, $\{h_{ij}\}$ is diagonal, and assume without loss of generality that
	$$h^{11}\ge h^{22}\ge\cdots\ge h^{nn}.$$
	
	Take a vector $\xi=(1,0,\cdots,0)$ at $(x_0, t_0)$, and extend it to a parallel vector field in a neighborhood of $x_0$ independent of $t$, still denoted by $\xi$. Set
	$$\widetilde W(x,t)=\frac{\frac{h^{kl}}{u}\xi_k\xi_l}{e^{kl}\xi_k\xi_l},$$
	which is differential, and there holds
	$$\widetilde W(x,t)\le \widetilde W(x_0,t_0)=W(x_0,t_0)=\frac{h^{11}}{u}(x_0,t_0).$$
	
	Note that at the point $(x_0, t_0)$
	\begin{gather*}
		\p_t\widetilde W=\p_t\frac{h^{11}}{u}, \; D_i\widetilde W=D_i\frac{ h^{11}}{u},\; {\mbox{ and }}\; D^2_{ij}\widetilde W=D^2_{ij}\frac{h^{11}}{u}.
	\end{gather*}
	This implies that $\widetilde W$ satisfies the same evolution as $\frac{h^{11}}{u}$ at $(x_0, t_0)$. Therefore we shall just apply the maximum principle to the evolution of $\frac{h^{11}}{u}$ to obtain the upper bound.
	
	We denote the partial derivatives $\frac{\p h^{pq}}{\p h_{kl}}$ and $\frac{\p^2 h^{pq}}{\p h_{rs}\p h_{kl}}$ by $h^{pq}_{kl}$ and $h^{pq}_{kl,rs}$ respectively. From \cite{UJ}, we have
	\begin{gather*}
		h^{pq}_{kl}=-h^{pk}h^{ql},\\
		h^{pq}_{kl,rs}=h^{pr}h^{ks}h^{ql}+h^{pk}h^{qr}h^{ls},\\
		D_jh^{pq}=h^{pq}_{kl}D_jh_{kl},\\
		D^2_{ij}h^{pq}=h^{pq}_{kl}D^2_{ij}h_{kl}+h^{pq}_{kl,rs}D_ih_{rs}D_jh_{kl}.
	\end{gather*}
Let $J=F^\beta$. We differentiate the equation (\ref{2.19}) at $(x_t,t)$ to obtain
	\begin{gather}
		\frac{\p}{\p t}D_ku=(uG)_kJ+uG J_k-\eta u_k,\\
		\frac{\p}{\p t}D^2_{kl}u=(uG)_{kl}J+(uG)_{k}J_l+(uG)_lJ_k+(uG) J_{kl}-\eta u_{kl}.\label{3.10}
	\end{gather}
	Using (\ref{2.19}) and (\ref{3.10}), we see that $h_{kl}=D^2_{kl}u+\delta_{kl}u$ satisfies the equation
	$$\frac{\p}{\p t}h_{kl}=(uG)_{kl}J+(uG)_{k}J_l+(uG)_lJ_k+uG J_{kl}+uG J\delta_{kl}-\eta h_{kl},$$
	thus
	\begin{align}\label{3.11}
		\frac{\p}{\p t}h^{11}=&h^{11}_{kl}\frac{\p}{\p t}h_{kl}\notag\\
		=&-(h^{11})^2((uG)_{11}J+2(uG)_1J_1)\notag\\
		&-(h^{11})^2uG(J^{ij,mn}D_1h_{ij}D_1h_{mn}+J^{ij}D_1D_1h_{ij})-uGJ(h^{11})^2+\eta h^{11}.
	\end{align}
	Note that
	\begin{align}\label{3.12}
D_kD_lh^{11}&=h^{11}_{mn}D_kD_lh_{mn}+h^{11}_{mn,rs}D_lh_{mn}D_kh_{rs}\notag\\
		&=-(h^{11})^2D_kD_lh_{11}+h^{ms}(h^{11})^2D_lh_{1m}D_kh_{1s}+h^{ns}(h^{11})^2D_lh_{1n}D_kh_{1s}\notag\\
		&=-(h^{11})^2D_kD_lh_{11}+2(h^{11})^2h^{rs}D_1h_{lr}D_1h_{ks}.
	\end{align}
	By the Ricci identity,
	\begin{equation}\label{3.13}
D_kD_lh_{11}=D_1D_1h_{kl}+\delta_{1k}h_{1l}-h_{kl}+\delta_{lk}h_{11}-\delta_{1l}h_{1k},
	\end{equation}
Combination of (\ref{3.11}), (\ref{3.12}) and (\ref{3.13}) gives
	\begin{align*}
\frac{\p}{\p t}h^{11}=&uG J^{kl}D_kD_lh^{11}-uG(h^{11})^2(J+J^{kl}h_{kl})+uG\sum_iJ^{ii}h^{11}+\eta h^{11}\\&-uG(h^{11})^2(2J^{km}h^{nl}+J^{kl,mn})D_1h_{kl}D_1h_{mn}\\&-(h^{11})^2(2D_1(uG)D_1J+JD_1D_1(uG)).
	\end{align*}
	Since $F=J^\frac{1}{\beta}$ is homogenous of degree one, and satisfies Assumption \ref{a1.1}, it follows from Urbas \cite{UJ} that,
	\begin{gather}
		F^{ij}h_{ij}=F,\\
		(2F^{km}h^{nl}+F^{kl,mn})D_1h_{kl}D_1h_{mn}\geq2F^{-1}F^{kl}F^{mn}D_1h_{kl}D_1h_{mn}.
	\end{gather}
	We then have
	\begin{align*}
		\frac{\p}{\p t}h^{11}=&\beta uG F^{\beta-1}F^{kl}D_kD_lh^{11}-uG F^\beta(\beta+1)(h^{11})^2+\beta uG F^{\beta-1}\sum_iF^{ii}h^{11}+\eta h^{11}\\
		&-uG(h^{11})^2(2\beta F^{\beta-1}F^{km}h^{nl}+\beta(\beta-1)F^{\beta-2}F^{kl}F^{mn}\\&+\beta F^{\beta-1}F^{kl,mn})D_1h_{kl}D_1h_{mn}
		-(h^{11})^2(2\beta F^{\beta-1}D_1(uG)D_1F+F^\beta D_1D_1(uG))\\
		\leq&\beta uG F^{\beta-1}F^{kl}D_kD_lh^{11}-(\beta+1)uG F^\beta(h^{11})^2+\beta uG F^{\beta-1}\sum_iF^{ii}h^{11}+\eta h^{11}\\
		&-\beta(\beta+1)uG F^{\beta-2}(h^{11})^2(D_1F)^2-2\beta F^{\beta-1}D_1(uG)D_1F(h^{11})^2\\
		&-F^\beta(h^{11})^2D_1D_1(uG).
	\end{align*}
	Since $$-2\beta F^{\beta-1}D_1(uG)D_1F(h^{11})^2\leq uG\beta(\beta+1)F^\beta(\frac{D_1F}{F})^2+\frac{\beta}{\beta+1}F^\beta
	\frac{(D_1(uG))^2}{uG},$$
	we have
	\begin{align}\label{3.16}
		\cL h^{11}\leq&-(\beta+1)uGF^\beta(h^{11})^2+\beta uGF^{\beta-1}\sum_iF^{ii}h^{11}+\eta h^{11}\notag\\&+\frac{\beta}{\beta+1}F^\beta
		\frac{(D_1(uG))^2}{uG}(h^{11})^2
		-F^\beta(h^{11})^2D_1D_1(uG),
	\end{align}
where $\cL=\p_t-\beta uGF^{\beta-1}F^{kl}D_kD_l$. Meanwhile, we have
\begin{equation}\label{3.17}
\begin{split}
\cL u=& u(GJ-\eta)-\beta uGF^{\beta-1}F^{kl}D_kD_lu\\
=&u(GJ-\eta)-\beta uGJ+\beta u^2GF^{\beta-1}\sum_{i}F^{ii}.
\end{split}
\end{equation}
Combination of (\ref{3.16}) and (\ref{3.17}) gives
\begin{equation}\label{3.18}
\begin{split}
\cL\frac{h^{11}}{u}=&\frac{\cL h^{11}}{h^{11}}-\frac{\cL u}{u}\\
\le&-uGJh^{11}\(\beta+1-\frac{\beta}{\beta+1}\frac{(D_1(uG))^2}{(uG)^2}+\frac{D_1D_1(uG)}{uG}\)+C_9\\
\le&-uGJh^{11}\(\beta+1-\frac{\beta}{\beta+1}\frac{(D_{x_1}(uG))^2}{(uG)^2}+\frac{D_{x_1}D_{x_1}(uG)}{uG}\)+C_{10},
\end{split}
\end{equation}
where we have used
\begin{align}
D_i(uG)=&D_{x_i}(uG)+d_X(uG)\(\bar\nn_iX\)=D_{x_i}(uG)+d_X(uG)\(\bar\nn_i(ux+Du)\)\notag\\
=&D_{x_i}(uG)+d_X(uG)\(h_{ij}e_j\),\\
h^{11}\frac{D_ju}{u}=&D_jh^{11}=-{(h^{11})}^2D_jh_{11},\notag\\
D_1D_1(uG)=&D_{x_1}D_{x_1}(uG)+d_X\(D_{x_1}(uG)\)\(h_{11}e_1\)+d_X\((uG)_{x_1}\)\(h_{11}e_1\)\notag\\
&+h_{11}d_X\((uG)_{X_1}\)\(h_{11}e_1\)-d_X(uG)\(h_{11}x\)+d_X(uG)\(D_jh_{11}e_j\)\notag\\
=&D_{x_1}D_{x_1}(uG)+d_X\(D_{x_1}(uG)\)\(h_{11}e_1\)+d_X\((uG)_{x_1}\)\(h_{11}e_1\)\notag\\
&+h_{11}d_X\((uG)_{X_1}\)\(h_{11}e_1\)-d_X(uG)\(h_{11}x\)-d_X(uG)\(h_{11}\frac{Du}{u}\)
\end{align}
in the last inequality. The calculation can be seen in \cite{CLL}.
Since $(D_{x_1}D_{x_1}(Gu)^\frac{1}{\beta+1}+(Gu)^\frac{1}{\beta+1})>0$  and $Gu$ is defined in a compact set by the $C^0$ estimates. Thus
$$\(\beta+1-\frac{\beta}{\beta+1}\frac{(D_{x_1}(uG))^2}{(uG)^2}+\frac{D_{x_1}D_{x_1}(uG)}{uG}\)\ge C_{11}.$$
By (\ref{3.18}) we have
$$\p_t\frac{h^{11}}{u}\leq-C_{11}h^{11}+C_{10}.$$
	That is $h^{11}\leq C$ by maximum principle and we have completed this proof.
\end{proof}

If $F=\sigma_{n}^\frac{1}{n}$, condition (ii) of Theorem \ref{t1.2} is not necessary. The corresponding result is proved in \cite{CL}, we list it here and don't repeat the proof.
\begin{lemma}\cite{CL}\label{lcl}
	Let $\beta>0$. Let $u(\cdot,t)$ be a positive, smooth and uniformly convex solution to
	\begin{equation*}
		\frac{\p u}{\p t}=\Psi(t,x,u,Du){\det}^\frac{\beta}{n}(D^2u+u\Rmnum{1})-\eta(t)u \qquad \text{on }\mS^n\times[0,T),
	\end{equation*}
	where $\Psi(t,x,u,Du): [0, \infty)\times\mS^n\times [0, \infty)\times\mR^n\rightarrow [0, \infty)$ is a smooth function. Suppose that $\Psi(t,x,u,Du)>0$ whenever $u>0$. If
	\begin{align*}
		\frac{1}{C_0}\le u(x,t)\le C_0,\qquad &\forall(x,t)\in\mS^n\times[0,T),\\
		\vert Du\vert(x,t)\le C_0,\qquad &\forall(x,t)\in\mS^n\times[0,T),\\
		0\le\eta(t)\le C_0,\qquad&\forall t\in[0,T),
	\end{align*}
	for some constant $C_0>0$, then
	\begin{equation*}
		(D^2u+u\Rmnum{1})(x,t)\ge C^{-1}\Rmnum{1},\qquad \forall(x,t)\in\mS^n\times[0,T),
	\end{equation*}
	where $C$ is a positive constant depending only on $n,\beta,C_0,u(\cdot,0)$, $\vert\Psi\vert_{L^\infty(U)}$, $\vert1/\Psi\vert_{L^\infty(U)}$, $\vert\Psi\vert_{C^1_{x,u,Du}(U)}$ and $\vert\Psi\vert_{C^2_{x,u,Du}(U)}$ where $U=[0,T)\times\mS^n\times[1/C_0,C_0]\times B^n_{C_0}$ ($B^n_{C_0}$ is the ball
	centered at the origin with radius $R$ in $\mR^n$).
\end{lemma}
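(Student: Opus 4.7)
The plan is to adapt the strategy of the proof of Lemma \ref{l3.5} to the Monge-Ampère case, exploiting the very special structure of $J=F^\beta=\det^{\beta/n}$ so as to bypass the structural hypothesis on $(u\Psi)$ that was needed there. Concretely, I would work with the hypersurface $M_t$ whose support function is $u(\cdot,t)$ and try to bound from above the largest eigenvalue of $b^{ij}=(D^2u+u\Rmnum{1})^{-1}$ by applying the parabolic maximum principle to
\[
W(x,t)=\max\Bigl\{\tfrac{h^{ij}(x,t)\xi_i\xi_j}{u(x,t)}:\ e^{ij}(x)\xi_i\xi_j=1\Bigr\}.
\]
Assuming the maximum on $\mS^n\times[0,T']$ is attained at $(x_0,t_0)$ with $t_0>0$, I diagonalize the frame so that $h^{11}\ge\cdots\ge h^{nn}$, extend the eigenvector $(1,0,\dots,0)$ parallelly, and reduce the problem to controlling $\widetilde W=h^{11}/u$ at $(x_0,t_0)$, exactly as was done for Lemma \ref{l3.5}.

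Next, I would linearize the evolution equation for $h^{11}$. Differentiating the equation twice and using the Ricci and Codazzi identities together with $\p h^{pq}/\p h_{kl}=-h^{pk}h^{ql}$, one obtains (with $\cL=\p_t-\frac{\beta}{n}u\Psi J\,b^{kl}D_kD_l$, since for $F=\sigma_n^{1/n}$ we have $F^{ij}=\tfrac1nFb^{ij}$) an inequality of the schematic form
\[
\cL h^{11}\le -(\beta+1)\,u\Psi J\,(h^{11})^2+\beta\,u\Psi J\,h^{11}\cdot\mathrm{tr}(b^{-1})+\eta\,h^{11}-(h^{11})^2\bigl[2D_1(u\Psi)D_1\log J+D_1D_1(u\Psi)\bigr].
\]
The inverse-concavity / log-concavity of $\det$ is built in: the terms coming from $J^{ij,mn}D_1h_{ij}D_1h_{mn}$ are absorbed into the Cauchy--Schwarz bound $-2D_1(u\Psi)D_1J\le \tfrac{\beta+1}{\beta}u\Psi J(D_1\log J)^2\cdot(\cdots)+\ldots$ so that only $D_1D_1(u\Psi)/(u\Psi)$ and $(D_1(u\Psi))^2/(u\Psi)^2$ remain. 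After subtracting $\cL u$ and using (\ref{2.19}), one arrives at
\[
\cL\widetilde W\le -u\Psi J\,h^{11}\!\left(\beta+1-\tfrac{\beta}{\beta+1}\tfrac{(D_1(u\Psi))^2}{(u\Psi)^2}+\tfrac{D_1D_1(u\Psi)}{u\Psi}\right)+C.
\]

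The decisive point -- and the place where the hypothesis $\Psi=\Psi(t,x,u,Du)$ (as opposed to $G=G(t,x,X)$) buys us something -- is the treatment of $D_1D_1(u\Psi)$. Since $\Psi$ depends on $Du$ only through its $\mS^n$-derivatives and not through the Euclidean position $X=ux+Du$, the expansion of $D_1(u\Psi)$ contains $h_{11}$ only linearly through $\Psi_{u_j}(h_{1j}-u\delta_{1j})$, and no quadratic $h_{11}^2$ term appears the way it did in Lemma \ref{l3.5} via $d_X(uG)(h_{ij}e_j)$. The only cubic-in-$h$ contribution, namely $(u\Psi)_{u_j}D_1h_{1j}=(u\Psi)_{u_j}D_jh_{11}$ (Codazzi), is eliminated using the critical point equation $D_j\widetilde W=0$, which at $(x_0,t_0)$ gives $D_jh_{11}=-(h^{11})^{-1}(h_{11})^2 D_ju/u$, i.e.\ a bound by $C\,h^{11}\cdot|Du|/u$. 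Consequently the bracket above is bounded below by a positive constant depending only on $C_0$ and the $C^2$-norm of $\Psi$ on $U$, and therefore
\[
\cL\widetilde W\le -C_{11}(h^{11})^2+C_{12}\,h^{11}+C_{13}
\]
at the maximum, which by the parabolic maximum principle yields $h^{11}\le C$.

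The main obstacle, and the place to exercise care, is the step above that eliminates the third-derivative term $(u\Psi)_{u_j}D_jh_{11}$: one must verify that when this term is re-expressed using the critical point condition on $\widetilde W$, the resulting contribution is indeed absorbed into the negative $(h^{11})^2$ term rather than destroying it. This is purely a bookkeeping argument once the absence of $d_X$ terms is noted, but it is the reason the structural assumption $(D_{x_i}D_{x_j}(\Psi u)^{1/(\beta+1)}+(\Psi u)^{1/(\beta+1)}\delta_{ij})>0$ of Lemma \ref{l3.5} can be discarded in the Gauss curvature case.
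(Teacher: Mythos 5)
The paper does not actually prove this lemma: it is quoted from \cite{CL} with the explicit remark ``we list it here and don't repeat the proof,'' so your argument has to stand on its own, and it does not. The decisive step --- the claim that the bracket $\beta+1-\frac{\beta}{\beta+1}\frac{(D_1(u\Psi))^2}{(u\Psi)^2}+\frac{D_1D_1(u\Psi)}{u\Psi}$ is bounded below by a positive constant merely because $\Psi$ depends on $(t,x,u,Du)$ rather than on $X$ --- is false. Even after discarding the parts that are linear or quadratic in $h_{11}$ (which, as you note, become bounded or lower order after multiplication by $(h^{11})^2$) and the third-derivative term (correctly removed via the critical-point condition), the bracket retains zeroth-order contributions such as $-\frac{\beta}{\beta+1}\bigl(\frac{u_1\Psi+u\Psi_{x_1}+u\Psi_u u_1-u^2\Psi_{p_1}}{u\Psi}\bigr)^2$ and $\frac{u\Psi_{x_1x_1}+\cdots}{u\Psi}$, which carry no sign and are only \emph{bounded} by $|\Psi|_{C^1},|\Psi|_{C^2}$; nothing forces the bracket to be positive. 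In fact this bracket is exactly the quantity whose positivity is encoded by the hypothesis $(D_{x_i}D_{x_j}(\Psi u)^{1/(\beta+1)}+(\Psi u)^{1/(\beta+1)}\delta_{ij})>0$ of Lemma \ref{l3.5}, and its computation never uses the specific form of $F$; if your reasoning were valid it would equally delete that hypothesis for $F=\sigma_k^{1/k}$, $k<n$, with $G=\psi(\nu)u^{\alpha-1}$ --- a function of $(x,u)$ only --- which is precisely the setting of Ivaki's counterexample recalled in Remark (2) after Theorem \ref{t1.2}.

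What actually rescues the Gauss-curvature case is a mechanism your proposal never invokes: for $F=\sigma_n^{1/n}$ one has $F^{ij}=\frac{F}{n}h^{ij}$, so the trace of the coefficient matrix of $\cL$ satisfies $\sum_iF^{ii}=\frac{F}{n}\sum_i\kappa_i\ge\frac{F}{n}h^{11}$ and blows up as the smallest eigenvalue of $D^2u+u\Rmnum{1}$ degenerates. The proof in \cite{CL} therefore augments the test function (using, e.g., $\log h^{11}-A\log u$ plus a gradient term), so that $\cL$ applied to the extra terms produces a favorable contribution of size $-cA\sum_iF^{ii}\le-c'A\,h^{11}$; choosing $A$ large in terms of $|\Psi|_{C^2(U)}$ then dominates the unsigned $D_1D_1(u\Psi)$ contributions whatever their sign. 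For a general $F$ in Assumption \ref{a1.1} the trace $\sum_iF^{ii}$ need not blow up (e.g. $F=\sigma_1/n$), which is exactly why condition (ii) cannot be dispensed with there. To make your proof work you must redo the maximum-principle computation with such an augmented test function rather than with $h^{11}/u$ alone.
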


\begin{lemma}\label{l3.6}
	Let $\beta>0$. Let $u(\cdot,t)$ be a positive, smooth and uniformly convex solution to
	\begin{equation*}
		\frac{\p u}{\p t}=uG(t,x,X)F^\beta(D^2u+u\Rmnum{1})-\eta(t)u\quad\text{ on }\mS^n\times[0,T),
	\end{equation*}
where $G(t,x,X): [0, \infty)\times\mS^n\times\mR^{n+1}\rightarrow(0,\infty)$ is a smooth function if $\la x,X\ra>0$. If
\begin{equation*}
		\frac{1}{C_0}\le u(x,t)\le C_0,\qquad\qquad \forall(x,t)\in\mS^n\times[0,T).
\end{equation*}
Suppose either $F$ satisfies Assumption \ref{a1.1}, $(D_{x_i}D_{x_j}(Gu)^\frac{1}{\beta+1}+(Gu)^\frac{1}{\beta+1}\delta_{ij})>0$,
\begin{align*}
	\eta(t)\le	C_0,\qquad\qquad&\forall t\in[0,T),\\
	\frac{1}{C_0}\le F(x,t)\le C_0,\qquad\qquad &\forall(x,t)\in\mS^n\times[0,T),
\end{align*}
or $F=\sigma_{n}^\frac{1}{n}$,
\begin{align*}
0\le\eta(t)\le	C_0,\qquad\qquad&\forall t\in[0,T),\\
\sigma_{n}^\frac{1}{n}\le C_0,\qquad\qquad &\forall(x,t)\in\mS^n\times[0,T),
\end{align*}
for some constant $C_0>0$. Then
	\begin{equation*}
	\frac{1}{C}\le	(D^2u+u\Rmnum{1})(\cdot,t)\le C,\qquad \forall t\in[0,T),
	\end{equation*}
where $C$ is a positive constant depending only on $\beta,C_0,u(\cdot,0)$, $\vert G\vert_{L^\infty(U)}$, $\vert1/G\vert_{L^\infty(U)}$, $\vert G\vert_{C^1_{x,X}(U)}$ and $\vert G\vert_{C^2_{x,X}(U)}$ where $U=[0,T)\times\mS^n\times B^n_{C_0}$ ($B^n_{C_0}$ is the ball
	centered at the origin with radius $C_0$ in $\mR^n$).
\end{lemma}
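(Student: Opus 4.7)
The plan is to obtain the two-sided bound $\tfrac{1}{C}\Rmnum{1}\le D^2u+u\Rmnum{1}\le C\Rmnum{1}$ by combining the one-sided lower bound on the principal radii $\lambda_i$, already supplied by Lemma \ref{l3.5} (or Lemma \ref{lcl} in the Gauss curvature case), with the assumed upper bound on $F$ (respectively $\sigma_n^{1/n}$) which is part of the present hypothesis. The two cases will be handled in parallel, differing only in which earlier lemma provides the lower bound and in how the upper bound on $\lambda_i$ is extracted.

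First I would verify that the lower bound on $\lambda_i$ is already available. When $F$ satisfies Assumption \ref{a1.1}, the hypotheses listed here match exactly those of Lemma \ref{l3.5} (in particular the structural condition on $Gu$ is stated), so that lemma applies verbatim and delivers $D^2u+u\Rmnum{1}\ge(1/C)\Rmnum{1}$. In the Gauss curvature case the evolution coincides with the one treated by Lemma \ref{lcl} upon setting $\Psi(t,x,u,Du)=uG(t,x,ux+Du)$; this $\Psi$ is smooth and strictly positive whenever $u>0$ since $G>0$. The bound $|Du|\le C_0$ needed by Lemma \ref{lcl} is not assumed directly, but it follows from the $C^0$ bound on $u$ together with $\rho=\sqrt{u^2+|Du|^2}$ and the identity $\max_{\mS^n}\rho=\max_{\mS^n}u$ from Lemma \ref{l2.2}. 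Thus the lower bound on the principal radii holds in both cases.

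Next I would extract the upper bound $\lambda_i\le C$ from the assumed bound on $F$. In the Gauss curvature case this is elementary: $\sigma_n(\lambda)=\prod_i\lambda_i\le C_0^n$, and the lower bound $\lambda_i\ge c>0$ from the previous step gives $\lambda_j\le C_0^n c^{-(n-1)}$. In the general case I would invoke the inverse concavity structure built into Assumption \ref{a1.1}. The dual function $F_*(\kappa)=1/F(1/\kappa_1,\ldots,1/\kappa_n)$ is homogeneous of degree one, strictly positive on $\Gamma_+$, and vanishes on $\partial\Gamma_+$. Setting $\kappa_i=1/\lambda_i$ we have $\kappa_i\le c^{-1}$ and $F_*(\kappa)=1/F(\lambda)\ge 1/C_0$. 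If some $\kappa_j$ were arbitrarily small, the boundary vanishing of $F_*$ together with continuity and the upper bound on the remaining $\kappa_i$ would force $F_*(\kappa)$ arbitrarily close to zero, contradicting $F_*(\kappa)\ge 1/C_0$. Hence every $\kappa_i$ is uniformly bounded below by a positive constant, which is equivalent to $\lambda_i\le C$ uniformly.

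Combining the two one-sided estimates yields the conclusion. The step I expect to require the most care is the quantitative compactness argument in the general case, which relies crucially on the boundary-vanishing condition $F_*|_{\partial\Gamma_+}=0$ in Assumption \ref{a1.1}; everything else is a routine matching of the hypotheses of this lemma to those of Lemmas \ref{l3.5} and \ref{lcl}, together with a bookkeeping of the $C^2$ dependencies coming through $X=ux+Du$.
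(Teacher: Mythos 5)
Your proposal is correct and follows essentially the same route as the paper: the lower bound on the principal radii comes from Lemma \ref{l3.5} (or Lemma \ref{lcl} with $\Psi=uG$ in the Gauss curvature case), and the upper bound is extracted from the assumed bound on $F$ via the continuity of $F_*$ up to $\partial\Gamma_+$ and the vanishing condition $F_*|_{\partial\Gamma_+}=0$, which confines $\lambda$ to a fixed compact subset of $\Gamma_+$. Your extra remarks (deriving $|Du|\le C_0$ from the $C^0$ bound via Lemma \ref{l2.2}, and the elementary product argument for $\sigma_n$) are correct elaborations of details the paper leaves implicit.
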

\begin{proof}
By Lemma \ref{l3.5} and \ref{lcl}, we have the left inequality and $\frac{1}{\l_i}\leq C$ for $i=1,\cdots,n$. Since $F_*(\frac{1}{\l_1},\cdots,\frac{1}{\l_n})=\frac{1}{F(\l_1,\cdots,\l_n)}$ is uniformly continuous on $\overline{\Gamma}_{C}=\{\l\in\overline{\Gamma}_+\vert\l_i\geq \frac{1}{C}$ for all $i \}$, and $F_*$ is bounded from below by a positive constant.  Assumption \ref{a1.1} implies that $\l_i$ remains in a fixed compact subset of $\bar\Gamma_+$, which is independent of $t$. That is $$\frac{1}{C}\leq\l_i(\cdot,t)\leq C.$$
\end{proof}

\section{Convergence and proof of theorems}
By Section 3 we only need to establish the $C^0$ estimate to derive the a priori estimates.
\begin{lemma}\label{l4.1}
	Let $u$ be a smooth, strictly convex solution of (\ref{2.19}) on $\mS^n\times[0,T^*)$, and let $G$ satisfy the condition (i) of Theorem \ref{t1.2}. Let $F$ satisfy the condition (i) of Assumption \ref{a3.1}. Then
	$$\frac{1}{C_{12}}\le u(x,t)\le  C_{12},$$
	where $C_{12}$ is a positive constant depending on $\lim\sup_{s\rightarrow+\infty}[\max_{y=sx}G(x,y)s^\beta]$, $\max_{\mS^n}u(x,0)$, $\min_{\mS^n}u(x,0)$, and $\lim\inf_{s\rightarrow0^+}[\min_{y=sx}G(x,y)s^\beta]$.
\end{lemma}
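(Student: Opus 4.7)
The plan is to apply the maximum principle separately to the spatial maximum $u_{\max}(t) := \max_{\mathbb{S}^n} u(\cdot,t)$ and the spatial minimum $u_{\min}(t) := \min_{\mathbb{S}^n} u(\cdot,t)$. At an extremal point of $u$, the first-order condition $Du = 0$ forces $X = u\nu + Du = u\,x$, so condition (i) of Theorem \ref{t1.2}, which controls $G(x,y)\,s^\beta$ along radial rays $y = sx$, is precisely what is needed. Combined with the homogeneity and monotonicity of $F$, this reduces the $C^0$-estimate to a one-dimensional ODE comparison.

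For the upper bound, pick $x_{\max}(t)$ with $u(x_{\max}(t), t) = u_{\max}(t)$. There $Du = 0$ and $D^2u \le 0$, so every eigenvalue $\lambda_i$ of $[D^2u + uI]$ satisfies $\lambda_i \le u$. By Assumption \ref{a3.1}(\rmnum{1}), $F$ is positive, increasing in each entry, homogeneous of degree one, and $F^\beta(1,\ldots,1) = 1$; hence $F(\lambda_1,\ldots,\lambda_n) \le F(u,\ldots,u) = u$, whence $F^\beta \le u^\beta$. Since $X = u_{\max}\,x_{\max}$,
$$G(x_{\max}, X)\, F^\beta \;\le\; \Bigl(\max_{y = u_{\max} x} G(x,y)\Bigr)\, u_{\max}^\beta.$$
By the first half of condition (i) of Theorem \ref{t1.2}, there exist $s_1 > 0$ and $\varepsilon \in (0,1)$ such that $\max_{y=sx} G(x,y)\, s^\beta \le 1 - \varepsilon$ for all $s \ge s_1$. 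Applying Hamilton's trick for the evolution of Lipschitz spatial extrema, whenever $u_{\max}(t) \ge s_1$ one gets
$$\tfrac{d^+}{dt}\, u_{\max}(t) \;\le\; (GF^\beta - 1)\, u_{\max}(t) \;\le\; -\varepsilon\, u_{\max}(t),$$
and therefore $u_{\max}(t) \le \max\{s_1,\ \max_{\mathbb{S}^n} u(\cdot,0)\}$ for all $t \in [0,T^*)$.

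The lower bound is entirely symmetric. At a point $x_{\min}(t)$ attaining $u_{\min}(t)$, one has $Du = 0$ and $D^2u \ge 0$, giving $\lambda_i \ge u$, hence $F^\beta \ge u^\beta$, and $X = u_{\min}\, x_{\min}$, so
$$G F^\beta \;\ge\; \Bigl(\min_{y = u_{\min} x} G(x,y)\Bigr)\, u_{\min}^\beta.$$
The second half of condition (i) provides $s_2 > 0$ and $\varepsilon > 0$ with $\min_{y=sx} G(x,y)\, s^\beta \ge 1 + \varepsilon$ whenever $0 < s \le s_2$, and the resulting Dini-derivative bound $\tfrac{d^-}{dt} u_{\min}(t) \ge \varepsilon\, u_{\min}(t)$ yields $u_{\min}(t) \ge \min\{s_2,\ \min_{\mathbb{S}^n} u(\cdot,0)\}$.

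I do not anticipate any serious obstacle: this is the standard barrier argument for inverse curvature flows, adapted to an $(x,X)$-dependent speed by exploiting that at the extremal points $X$ lies on the ray $\mathbb{R}_+ x_{\max}$ (resp.\ $\mathbb{R}_+ x_{\min}$), so the radial hypothesis (i) of Theorem \ref{t1.2} is precisely calibrated to the geometry. The only minor technical point is differentiating $u_{\max}(t)$ and $u_{\min}(t)$, which is handled in the Dini sense and reduced by ODE comparison to the one-line computations above.
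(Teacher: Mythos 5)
Your proposal is correct and follows essentially the same route as the paper: evaluate at spatial extrema where $Du=0$ forces $X=u\,x$ and $[D^2u+u\Rmnum{1}]\le u\,\Rmnum{1}$ (resp.\ $\ge$), use homogeneity and monotonicity of $F$ to get $F^\beta\le u^\beta$ (resp.\ $\ge$), and invoke condition (i) to obtain a sign for $GF^\beta-1$ once $u$ leaves a fixed interval. The Dini-derivative/Hamilton's-trick phrasing is just a more careful rendering of the paper's ODE comparison.
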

\begin{proof}
	Let $u_{\max}(t)=\max_{x\in \mS^n}u(\cdot,t)=u(x_t,t)$. For fixed time $t$, at the point $x_t$, we have $$D_iu=0 \text{ and } D^2_{ij}u\leq0.$$
	Note that $h_{ij}=D^2_{ij}u+u\delta_{ij}$.
	At the point $x_t$, we have $F^{\beta}(h_{ij})\leq u^{\beta}$. Then
	$$\frac{d}{dt}u_{\max}\leq u(Gu^\beta-1).$$
	Let $\bar{A}=\lim\sup_{s\rightarrow+\infty}[\max_{y=sx}G(x,y)s^\beta]$. Note that $\rho(x_t)=\max_{\mS^n}\rho=\max_{\mS^n}u$. If we denote $X=\rho\xi$, then $u(x_t)=\la X,x\ra=\rho(x_t)\la x,\xi\ra$, i.e. $\la x,\xi\ra|_{x_t}=1$ and $X=u({x_t})x$.
	By the condition (i) of Theorem \ref{t1.2}, $\eps=\frac{1}{2}(1-\bar{A})$ is positive and there exists a positive constant $C_{13}>0$ such that
	$$Gu^\beta<\bar{A}+\eps$$
	for $u>C_{13}$. It follows that
	$$Gu^\beta-1<\bar{A}+\eps-1<0,$$
	from which we have $\p_tu<0$ at maximal points. Hence
	$$u_{\max}\leq \max\{C_{13},u_{\max}(0)\}.$$
	Similarly, at the minimum point,
	$$\frac{d}{dt}u_{\min}\geq u(Gu^\beta-1).$$
	By the condition (i) of Theorem \ref{t1.2}, there exists a positive constant $C_{14}>0$ such that $Gu^\beta-1>0$ if $u<C_{14}$. Thus
	$$u_{\min}\geq \min\{C_{14},u_{\min}(0)\}.$$
\end{proof}

The estimates obtained in Lemma  \ref{l3.3}, \ref{l3.4}, \ref{l3.6} and \ref{l4.1}  are independent of $T^*$. By Lemma \ref{l4.1} and \ref{l3.6}, we conclude that the equation (\ref{2.19}) is uniformly parabolic. By the $C^0$ estimate (Lemma \ref{l4.1}), the gradient estimate, the $C^2$ estimate (Lemma \ref{l3.6}), Cordes and Nirenberg type estimates \cite{B2,LN} and the Krylov's theory \cite{KNV}, we get the H$\ddot{o}$lder continuity of $D^2u$ and $u_t$. Then we can get higher order derivation estimates by the regularity theory of the uniformly parabolic equations. Hence we obtain the long time existence and $C^\infty$-smoothness of solutions for the normalized flow (\ref{2.19}). The uniqueness of smooth solutions also follows from the parabolic theory.

Finally, we complete the proof of Theorem \ref{t1.2}, \ref{t1.3} and \ref{t1.4}, namely we will prove that the support function $u_\infty$ of $M_\infty$ satisfies the following equation
$$GF^\beta=1.$$
To achieve this, we need to find a quantity which can preserve monotonicity along flows.

\begin{proof of theorem 1.2}
By the condition (\rmnum{3}) of Theorem \ref{t1.2}, we choose the initial hypersurface $M_0$ satisfied $G(x,X)F^\beta\vert_{M_0}\ge1$ or $G(x,X)F^\beta\vert_{M_0}\le1$. We shall derive this condition is preserved along flows. Let $Q=G(x,X)F^\beta$, along flow (\ref{2.19}), note that
\begin{equation}
\p_tX=\p_t(ux+Du)=(Q-1)ux+(Q-1)Du+uDQ=(Q-1)X+uDQ.
\end{equation}
We have
\begin{equation}\label{5.1}
\begin{split}
\p_tQ=&\frac{d_XG(X)}{G}(Q-1)Q+\frac{d_XG(uDQ)}{G}Q+\beta\frac{Q}{F}F^{ij}\(((Q-1)u)_{ij}+((Q-1)u)\delta_{ij}\)\\
=&\(\frac{d_XG(X)}{G}+\beta\)(Q-1)Q+\frac{d_XG(uDQ)}{G}Q+\beta\frac{uQ}{F}F^{ij}Q_{ij}-2\beta\frac{Q}{F}F^{ij}Q_iu_j.
\end{split}
\end{equation}
By (\ref{5.1}), we can find condition (\rmnum{3}) is preserved along flow (\ref{1.2}) for all $t\ge0$. Moreover, due to the $C^0$ estimates,
$$\vert u(x,t)-u(x,0)\vert=\vert\int_{0}^t\(GF^\beta-1\)u(x,s)ds\vert<\infty.$$
Therefore, in view of the monotonicity of $u$, the limit $u(x,\infty):=\lim_{t_i\rightarrow\infty}u(x,t)$ exists and is positive, smooth and $D^2u(x,\infty)+u(x,\infty)\Rmnum{1}>0$. Thus the hypersurface with support function $u(x,\infty)$ is our desired solution to $$GF^\beta=1.$$

For the proof of Corollary \ref{c1.2} if $G=\psi(x)u^{\a-1}\rho^{\delta}$, we only need to prove the uniqueness. This is a special case of Theorem \ref{t5.1} in Section 5.
\end{proof of theorem 1.2}

\begin{proof of theorem 1.4}
At this case, $G=\varphi(x,u)$ and $F^k$ is divergence-free. We consider two quantities,
$$W_F=\frac{1}{k+1}\int_{\mS^n}uF^k dx \qquad \text{    and    }\qquad U=\int_{\mS^n}\int_{\eps}^u\varphi^{-\frac{k}{\beta}}(x,s)dsdx,$$
where $\eps$ is a positive constant. We prefer to get some properties of the derivative of $U$, thus $\eps$ is just chosen such that $U$ is well-defined. Let $Q=\varphi F^{\beta}$, we have
\begin{equation}
\begin{split}
\p_tW_F=&\frac{1}{k+1}\int_{\mS^n}uF^k(Q-1)+u(F^k)^{ij}(Q_{ij}u+2Q_iu_j+(Q-1)h_{ij})dx\\
=&\int_{\mS^n}uF^k(Q-1)dx,
\end{split}
\end{equation}
where we have used $F^k$ is homogeneous of degree $k$ and $\sum_{i\ge1}D_i(F^k)^{ij}=0$. Thus,
\begin{equation}\label{4.4}
\begin{split}
\p_t(W_F-U)=&\int_{\mS^n}F^k u(Q-1)-\varphi^{-\frac{k}{\beta}}(x,u)\p_tudx\\
=&\int_{\mS^n}u(\varphi F^\beta-1)\(F^k-\varphi^{-\frac{k}{\beta}}\)dx\\
=&\int_{ \{F^k\varphi^{\frac{k}{\beta}}-1\ne0\}}u\varphi^{-\frac{k}{\beta}}(\varphi F^{\beta}-1)^2\dfrac{F^k\varphi^{\frac{k}{\beta}}-1}
{\varphi F^{\beta}-1}dx+\int_{\{F^k\varphi^{\frac{k}{\beta}}-1=0\}}0dx.
\end{split}
\end{equation}
If $F^k\varphi^{\frac{k}{\beta}}>1$, we have $\varphi F^{\beta}=(F^k\varphi^{\frac{k}{\beta}})^\frac{\beta}{k}>1$ by $\frac{\beta}{k}>0$. If $F^k\varphi^{\frac{k}{\beta}}<1$, we have $\varphi F^{\beta}=(F^k\varphi^{\frac{k}{\beta}})^\frac{\beta}{k}<1$. Thus
$$\dfrac{F^k\varphi^{\frac{k}{\beta}}-1}{\varphi F^{\beta}-1}>0 \quad \text{  if  }\quad F^k\varphi^{\frac{k}{\beta}}-1\ne0.$$
This implies that
$$\p_t(W_F-U)\ge0,$$
equality holds if and only if $u$ solves $\varphi F^{\beta}=1$. Similar to the proof of Theorem \ref{t1.2}, by the bounds of $W_F$ and $U$ and the monotonicity of $W_F-U$, we obtain that
$$\int_{0}^\infty\vert\frac{d}{dt}(W_F-U)\vert dt\le C.$$
Hence there is a sequence of $t_i\rightarrow\infty$ such that $\frac{d}{dt}(W_F-U)\rightarrow0$, i.e. $u(\cdot,t_i)$ converges smoothly to a positive, smooth and uniformly convex function $u_\infty$ solving $\varphi F^{\beta}=1.$
\end{proof of theorem 1.4}

\begin{proof of theorem 1.5}
At this case $G=\varphi(x,u)\phi(X)$, $F=\sigma_{n}^\frac{1}{n}(D^2u+u\Rmnum{1})$. Since $M_t$ is a graph of a smooth and positive function $\rho(\xi)$ on $\mS^n$, we can denote $X=\rho\xi$ and $\phi(X)=\phi(\xi,\rho)$. Similar to the proof of Theorem \ref{t1.3},  We consider two quantities,
$$V=\int_{\mS^n}\int_\eps^\rho\phi^\frac{n}{\beta}(\xi,s)s^ndsd\xi \qquad \text{    and    }\qquad U=\int_{\mS^n}\int_{\eps}^u\varphi^{-\frac{n}{\beta}}(x,s)dsdx,$$
where $\eps$ is a positive constant.  Thus by $K=\frac{1}{\sigma_n(\l)}$, (\ref{2.22}) and (\ref{2.24}),
\begin{equation}\label{4.5}
	\begin{split}
		\p_t(V-U)=&\int_{\mS^n}\phi^\frac{n}{\beta}(\xi,\rho)\rho^n\p_t\rho d\xi-\int_{\mS^n}\varphi^{-\frac{n}{\beta}}(x,u)\p_tudx\\
		=&\int_{\mS^n}\(\phi^\frac{n}{\beta}\sigma_n-\varphi^{-\frac{n}{\beta}}\)\p_tu dx\\
		=&\int_{ \{\sigma_n(\phi\varphi)^{\frac{n}{\beta}}\ne1\}}u\varphi^{-\frac{n}{\beta}}(\varphi\phi\sigma_n^\frac{\beta}{n}-1)^2\dfrac{\sigma_n\phi^\frac{n}{\beta}\varphi^{\frac{n}{\beta}}-1}{\varphi\phi\sigma_n^\frac{\beta}{n}-1}dx+\int_{\{\sigma_n(\phi\varphi)^{\frac{n}{\beta}}=1\}}0dx.
	\end{split}
\end{equation}
By $\varphi\phi\sigma_n^\frac{\beta}{n}=(\sigma_n\phi^\frac{n}{\beta}\varphi^{\frac{n}{\beta}})^\frac{\beta}{n}$ we have
$$\dfrac{\sigma_n\phi^\frac{n}{\beta}\varphi^{\frac{n}{\beta}}-1}{\varphi\phi\sigma_n^\frac{\beta}{n}-1}>0 \quad \text{  if  }\quad \sigma\varphi^{\frac{m}{\beta}}-1\ne0.$$
Thus, it follows from (\ref{4.5}) that
$$\p_t(V-U)\ge0,$$
equality holds if and only if $u$ solves $\varphi\phi\sigma_n^\frac{\beta}{n}=1$. Similar to the proof of Theorem \ref{t1.2}, by the bounds of $V$ and $U$ and the monotonicity of $V-U$, we obtain that
$$\int_{0}^\infty\vert\frac{d}{dt}(V-U)\vert dt\le C.$$
Hence there is a sequence of $t_i\rightarrow\infty$ such that $\frac{d}{dt}(V-U)\rightarrow0$, i.e. $u(\cdot,t_i)$ converges smoothly to a positive, smooth and uniformly convex function $u_\infty$ solving $\varphi\phi\sigma_n^\frac{\beta}{n}=1.$
\end{proof of theorem 1.5}

\section{The uniqueness results}
In this section, we shall derive two uniqueness results in some particular cases. We firstly state a uniqueness result which is proved by an expanding flow.
\begin{theorem}\label{t1.6}
	Let $G=G(u,\rho)$, $F\in C^2(\Gamma_+)\cap C^0(\p\Gamma_+)$ satisfy Assumption \ref{a1.1}, and let $M_0$ be a closed, smooth, uniformly convex hypersurface in $\mathbb{R}^{n+1}$, $n\ge2$, enclosing the origin. Suppose
	
	$(i)$ $\lim_{s\rightarrow+\infty}[G(s,s)s^\beta]<1<\lim_{s\rightarrow0^+}[G(s,s)s^\beta]$,
	
	$(ii)$ $(D_{x_i}D_{x_j}(Gu)^\frac{1}{\beta+1}+(Gu)^\frac{1}{\beta+1}\delta_{ij})>0$,
	
	$(iii)$ $\frac{uG_u}{G}+\frac{\rho G_\rho}{G}+\beta\le0.$
	
	\noindent Then flow (\ref{1.2}) has a unique smooth strictly convex solution $M_t$ for all time $t>0$. And
	$M_t$ converges exponentially to a round sphere centered at the origin in the $C^\infty$-topology.
	
	This means that the solutions of $G(u,\rho)F^\beta=1$ must be a sphere whose radius $R$ satisfies $G(R,R)R^\beta=1$.
	
	In particular, if $F=\sigma_{n}(\l)^\frac{1}{n}$, the condition (ii) can be dropped.
\end{theorem}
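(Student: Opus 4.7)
The plan is to reduce Theorem \ref{t1.6} to Theorem \ref{t1.2} for long-time existence and then to extract the spherical limit by means of an oscillation-decay argument driven by condition (iii).

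First I would verify that the long-time existence is already contained in Theorem \ref{t1.2}. Indeed, when $G=G(u,\rho)$ and $y=sx$ with $s>0$, we have $|y|=s$ and $\langle x,y\rangle=s$, so $u=\rho=s$ and $G(x,y)s^{\beta}=G(s,s)s^{\beta}$. Therefore condition (i) of Theorem \ref{t1.6} is exactly condition (i) of Theorem \ref{t1.2}, and (ii) is identical. The first conclusion of Theorem \ref{t1.2} (which does not rely on its condition (iii)) then produces a unique smooth strictly convex solution $M_t$ for all $t>0$; when $F=\sigma_n^{1/n}$, its last remark lets us drop (ii).

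Next I would analyze the support function at its spatial extrema. Set $\Phi(s):=G(s,s)s^{\beta}$. Condition (iii), specialized to the diagonal $u=\rho=s$, gives $s\Phi'(s)/\Phi(s)\le 0$, so $\Phi$ is non-increasing, while (i) gives $\Phi(0^+)>1>\Phi(\infty)$. At a max of $u(\cdot,t)$, $Du=0$, $D^2u\le 0$, hence $\rho=u_{\max}$ and $[D^2u+u\Rmnum{1}]\le u_{\max}\Rmnum{1}$, which by homogeneity yields $F^{\beta}\le u_{\max}^{\beta}$. Plugging into (\ref{2.19}) gives
\begin{equation*}
\frac{d^{+}}{dt}u_{\max}\le(\Phi(u_{\max})-1)u_{\max},\qquad \frac{d^{-}}{dt}u_{\min}\ge(\Phi(u_{\min})-1)u_{\min}.
\end{equation*}
Since $u_{\max}\ge u_{\min}$ and $\Phi$ is non-increasing,
\begin{equation*}
\frac{d}{dt}\log\frac{u_{\max}}{u_{\min}}\le \Phi(u_{\max})-\Phi(u_{\min})\le 0,
\end{equation*}
so $u_{\max}/u_{\min}$ is a monotone Lyapunov quantity. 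Combined with ODE comparison against the scalar equation $\dot r=(\Phi(r)-1)r$, whose solutions converge to fixed points of $\Phi$, this locks $u_{\max}, u_{\min}$ between positive constants and, together with the parabolic estimates of Section~3, yields uniform $C^{\infty}$ bounds on $u(\cdot,t)$. Along any sequence $t_i\to\infty$ a subsequence converges smoothly to a positive uniformly convex limit $u_{\infty}$ solving $G(u_{\infty},\rho_{\infty})F^{\beta}=1$.

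The heart of the argument — and the main obstacle — is then to show that any smooth uniformly convex solution of $G(u,\rho)F^{\beta}=1$ must be a sphere. The extrema analysis above applied to $u_{\infty}$ gives $\Phi(u_{\infty,\max})\ge 1\ge \Phi(u_{\infty,\min})$, and monotonicity of $\Phi$ forces $\Phi(u_{\infty,\max})=\Phi(u_{\infty,\min})=1$. To promote this to $u_{\infty,\max}=u_{\infty,\min}$, I would write $R_{\infty}:=u_{\infty,\max}$, set $v:=u_{\infty}-R_{\infty}\le 0$ with $v=0$ attained, and linearize the equation $\log G+\beta\log F=0$ around the constant $R_{\infty}$. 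A direct computation using $Du\equiv 0$ at the constant solution gives $\delta\rho=\delta u$, $F^{ij}=\tfrac{1}{n}\delta^{ij}$ at $h=R_{\infty}\Rmnum{1}$, so the linearization takes the form
\begin{equation*}
\tfrac{\beta}{n}\Delta v+a\,v=0\quad\text{on }\mS^{n},\qquad a=R_{\infty}\tfrac{G_u+G_{\rho}}{G}\big|_{(R_{\infty},R_{\infty})}+\beta\le 0,
\end{equation*}
where the sign of $a$ is exactly what (iii) provides. Since $v\le 0$ attains the interior maximum $0$, the strong maximum principle for this operator (the zeroth-order coefficient has the favorable sign) forces $v\equiv 0$. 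Hence $u_{\infty}\equiv R_{\infty}$ with $\Phi(R_{\infty})=1$, so the limit $M_{\infty}$ is a round sphere centered at the origin. Uniqueness of the limit independent of subsequence, and the exponential rate, then follow from the monotonicity of $\log(u_{\max}/u_{\min})$ together with a spectral-gap argument at the spherical limit: the linearized operator is strictly coercive on the orthogonal complement of the constants, which is where the technical subtlety of passing from the non-strict bound (iii) to a positive decay rate must be handled with care.
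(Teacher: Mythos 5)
Your reduction of the long-time existence to Theorem \ref{t1.2} is correct (for $G=G(u,\rho)$ and $y=sx$ one has $u=\rho=s$, so condition (i) here is condition (i) there), and the extremum analysis giving uniform two-sided bounds on $u$ is fine. The gap is in the convergence/rigidity step, which is the actual content of the theorem. First, $v=u_\infty-R_\infty$ does not satisfy the linearized equation at the constant $R_\infty$, so the strong maximum principle cannot be applied to $\frac{\beta}{n}\Delta v+av$; to make this rigorous you must write $0=\mathcal{N}(u_\infty)-\mathcal{N}(R_\infty)$ as an integrated linearization along the segment $R_\infty+sv$, and the sign of the resulting zeroth-order coefficient is not controlled by condition (iii): away from $Du=0$ the relevant combination is $\frac{G_u}{G}+\frac{G_\rho}{G}\frac{u}{\rho}+\beta\frac{\sum_iF^{ii}}{F}$, which (iii) dominates only on the diagonal $\rho=u$, $h_{ij}=u\delta_{ij}$. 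Indeed you only ever use the diagonal restriction of (iii) (monotonicity of $\Phi(s)=G(s,s)s^{\beta}$), which is strictly weaker than the hypothesis and cannot suffice. Second, the appeal to a spectral gap for the exponential rate fails because (iii) is non-strict ($a\le0$), and the subconvergence claim is unsupported: uniform $C^\infty$ bounds give subsequential limits, but nothing in your argument forces $\partial_tu\to0$, so the limit need not be stationary (your monotone quantity $\log(u_{\max}/u_{\min})$ can have identically vanishing derivative when $\Phi$ is locally constant).

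The missing idea is the paper's gradient-decay estimate: set $\omega=\log u$ and apply the maximum principle to $Q=\frac12|D\omega|^2$. In the evolution of $Q_{\max}$ the full pointwise condition (iii) discards the term $\bigl(\frac{uG_u}{G}+\frac{\rho G_\rho}{G}+\beta\bigr)|D\omega|^2\le0$ (here $\rho=u\sqrt{1+|D\omega|^2}\ne u$ in general, which is exactly where the off-diagonal part of (iii) is needed), and the Ricci identity together with $[F^{ij}]>0$ and the a priori estimates leaves $\partial_tQ_{\max}\le\frac{2\beta}{F}\bigl(\max_iF^{ii}-\sum_iF^{ii}\bigr)Q_{\max}\le-C_0Q_{\max}$ since $n\ge2$. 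This yields exponential decay of $|Du|/u$, hence convergence to a round sphere, and the rigidity statement follows immediately because any solution of $G(u,\rho)F^{\beta}=1$ is a fixed point of the flow and therefore must have $Du\equiv0$. The exponential rate thus comes from the curvature term, not from any strictness in (iii), and this is the mechanism your proposal does not capture.
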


\textbf{Remark}: If $G=u^{\a-1}\rho^\delta$, we prove the long time existence and  convergence of this flow with $\a+\delta+\beta<1$, $\a\le0$. Meanwhile, we can prove the similar result with $\a+\delta+\beta=1$, $\a\le0$ by the same method.

\begin{proof}
It suffices to prove the convergence by the proof of Theorem \ref{t1.2}. Let $\omega=\log u$. Then we have
\begin{gather*}
	\omega_i=\frac{u_i}{u},\\\omega_{ij}=\frac{u_{ij}}{u}-\frac{u_iu_j}{u^2},\\
	h_{ij}=u_{ij}+u\delta_{ij}=u(\omega_{ij}+\omega_i\omega_j+\delta_{ij}).
\end{gather*}
It is easy to see that $\omega_{ij}$ is symmetric. Thus
$$\p_t\omega=\frac{\p_tu}{u}=G(u,\rho)u^{\beta}F^\beta([\omega_{ij}+\omega_i\omega_j+\delta_{ij}])-1.$$
Consider the auxiliary function $Q=\frac{1}{2}\vert D\omega\vert^2$. Note that $F=F([\omega_{ij}+\omega_i\omega_j+\delta_{ij}])$ in the rest proof. At the point where $Q$ attains its spatial maximum, we have
\begin{gather*}
	0=D_iQ=\sum_{l}\omega_{li}\omega_{l},\\
	0\geq D_{ij}^2Q=\sum_{l}\omega_{li}\omega_{lj}+\sum_{l}\omega_{l}\omega_{lij}.
\end{gather*}
Note that $D\rho=D(u\sqrt{1+|D\om|^2})=\sqrt{1+|D\om|^2}Du=\rho D\om$. We have
\begin{align*}
	\dfrac{\p_tQ_{\max}}{\om_t+1}&=\dfrac{1}{\om_t+1}\sum\omega_l\omega_{lt}\\
	&=\omega_l\((\frac{uG_u}{G}+\frac{\rho G_\rho}{G}+\beta)\omega_l+\beta \frac{F^{ij}}{F}(\omega_{ijl}+\omega_{il}\omega_j+\omega_i\omega_{jl})\)\\
	&=\(\frac{uG_u}{G}+\frac{\rho G_\rho}{G}+\beta\)|D\om|^2+\beta\frac{F^{ij}}{F}\omega_l\omega_{ijl}.
\end{align*}
By the Ricci identity,$$D_l\omega_{ij}=D_j\omega_{li}+\delta_{il}\omega_j-\delta_{ij}\omega_l,$$
we get
\begin{align}\label{5}
	\dfrac{\p_tQ_{\max}}{\om_t+1}&=\(\frac{uG_u}{G}+\frac{\rho G_\rho}{G}+\beta\)|D\om|^2+\beta\frac{F^{ij}}{F}(\omega_l\omega_{lij}+\omega_i\omega_j-\delta_{ij}\vert D\omega\vert^2)\notag\\
	&\leq\frac{2\beta}{F}(\max_iF^{ii}-\sum_iF^{ii})Q_{\max}
\end{align}
by condition (iii) and the positive definiteness of the symmetric matrix $[F^{ij}]$. Since we have proved the a priori estimates, we have the bound of $\om_t+1$, $F^{ii}$ and $F$, i.e. $\max_iF^{ii}-\sum_iF^{ii}\le -C_0$. This proves
\begin{equation*}
	\max_{\mS^n}\frac{\vert Du(\cdot,t)\vert}{u(\cdot,t)}\leq Ce^{-C_0t},\forall t>0,
\end{equation*}
for both $C$ and $C_0$ are positive constants. In other words, we have that $|D u|\to0$ exponentially as $t\to\infty$. Hence by the interpolation and the a priori estimates, we can get that $u$ converges exponentially to a constant in the $C^\infty$ topology as $t\to\infty$. By Theorem \ref{t1.2}, there exists a solution of $G(u,\rho)F^\beta=1$. Since this solution is invariant along flow (\ref{1.2}), the solution must be a sphere whose radius $R$ satisfies $G(R,R)R^\beta=1$.
\end{proof}

Finally we provide another uniqueness result.  The following proof is inspired by \cite{CW,DL3}.
\begin{theorem}\label{t5.1}
Assume $G(x,X)=G(x,u,\rho)$ and $F$ satisfies the condition (\rmnum{1}) of Assumption \ref{a3.1}. If whenever
\begin{equation}\label{5.6}
G(x,ms_1,ms_2)\ge G(x,s_1,s_2)m^{-\beta}
\end{equation}
holds for some positive $s_1$, $s_2$ and $m$, there must be $m\le1$. Then the solution to the equation
\begin{equation}\label{5.7}
G(x,u,\rho)F^\beta=c
\end{equation}
is unique, where $c$ is an arbitrary constant.
\end{theorem}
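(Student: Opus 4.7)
The plan is a direct maximum-principle comparison applied to the ratio of two putative solutions. Let $u_1$ and $u_2$ be two smooth, uniformly convex solutions of $G(x,u,\rho)F^\beta([D^2u+u\Rmnum{1}])=c$, and set $m := \max_{\mS^n}(u_1/u_2)$, attained at some $x_0\in\mS^n$. From $D(u_1/u_2)(x_0)=0$ and $D^2(u_1/u_2)(x_0)\le 0$ one reads off, after using $u_1(x_0)=mu_2(x_0)$ and cancelling the cross terms arising in $D^2(u_1/u_2)$, the relations
\[
u_1(x_0)=mu_2(x_0),\qquad Du_1(x_0)=m\,Du_2(x_0),\qquad D^2u_1(x_0)\le m\,D^2u_2(x_0).
\]
Consequently, at $x_0$ the spherical ``radius of curvature'' matrix satisfies $[D^2u_1+u_1\Rmnum{1}](x_0)\le m\,[D^2u_2+u_2\Rmnum{1}](x_0)$, while the radial function satisfies $\rho_1(x_0)=\sqrt{u_1^2+|Du_1|^2}=m\rho_2(x_0)$.

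Next I would exploit the properties of $F$ in condition (\rmnum{1}) of Assumption~\ref{a3.1}: since $F^{ij}>0$ and $F$ is homogeneous of degree one, the matrix inequality above yields $F^\beta([D^2u_1+u_1\Rmnum{1}])(x_0)\le m^\beta F^\beta([D^2u_2+u_2\Rmnum{1}])(x_0)$. Evaluating both PDEs at $x_0$ and dividing, the constants $c$ and the common factor $F^\beta([D^2u_2+u_2\Rmnum{1}])(x_0)$ cancel, leaving
\[
1\le \frac{G(x_0,mu_2(x_0),m\rho_2(x_0))}{G(x_0,u_2(x_0),\rho_2(x_0))}\,m^\beta,
\]
which is precisely the inequality $G(x_0,ms_1,ms_2)\ge G(x_0,s_1,s_2)m^{-\beta}$ of the hypothesis with $s_1=u_2(x_0)$ and $s_2=\rho_2(x_0)$. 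The structural assumption then forces $m\le 1$.

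Interchanging the roles of $u_1$ and $u_2$ and running the identical argument on $\tilde m:=\max_{\mS^n}(u_2/u_1)$ gives $\tilde m\le 1$, equivalently $\min_{\mS^n}(u_1/u_2)\ge 1$. Since $\min(u_1/u_2)\le \max(u_1/u_2)$, both are equal to $1$, so $u_1\equiv u_2$ on $\mS^n$, proving uniqueness regardless of the value of $c$.

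The main delicate step is verifying rigorously the spherical second-derivative test that produces the clean matrix inequality $D^2u_1(x_0)\le m\,D^2u_2(x_0)$; one has to compute $D^2(u_1/u_2)$ with respect to the covariant derivative on $\mS^n$ and confirm that the cross terms involving $Du_1\otimes Du_2$ cancel against $u_1\,Du_2\otimes Du_2/u_2$ exactly because of the first-order relation $Du_1=mDu_2$. Once this elementary calculation is in place, the rest of the proof is a one-shot application of homogeneity of $F^\beta$ together with the assumed monotonicity property of $G$.
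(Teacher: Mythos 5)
Your proposal is correct and follows essentially the same route as the paper: a maximum-principle comparison of the ratio $u_1/u_2$ at its maximum point, using the first- and second-derivative tests to get $Du_1=mDu_2$, $\rho_1=m\rho_2$ and $[D^2u_1+u_1\Rmnum{1}]\le m[D^2u_2+u_2\Rmnum{1}]$ there, then monotonicity and degree-one homogeneity of $F$ plus the structural hypothesis on $G$ to force $m\le1$, and symmetry for the reverse bound. The only cosmetic difference is that the paper works with $\log(u_1/u_2)$, which makes the cancellation of the cross terms you flag as the "delicate step" automatic.
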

\begin{proof}
Let $u_1$, $u_2$ be two smooth solutions of (\ref{5.7}), i.e.
\begin{equation*}
G(x,u_1,\rho_1)F^{\beta}(D^2u_1+u_1\Rmnum{1})=c,\qquad G(x,u_2,\rho_2)F^{\beta}(D^2u_2+u_2\Rmnum{1})=c.
\end{equation*}
Suppose $M=\frac{u_1}{u_2}$ attains its maximum at $x_0\in\mS^n$, then at $x_0$,
\begin{align*}
	0=D\log M=&\frac{Du_1}{u_1}-\frac{Du_2}{u_2},\\
	0\ge D^2\log M=&\frac{D^2u_1}{u_1}-\frac{D^2u_2}{u_2}.
\end{align*}
Hence at $x_0$, by $\rho=u\sqrt{1+\vert D\log u\vert^2}$ we get
\begin{equation*}
	\begin{split}
1=&\frac{G(x,u_1,\rho_1)F^{\beta}(D^2u_1+u_1\Rmnum{1})}{G(x,u_2,\rho_2)F^{\beta}(D^2u_2+u_2\Rmnum{1})}\\
=&\frac{G(x,u_1,u_1\sqrt{1+\vert D\log u_1\vert^2})u_1^{\beta}F^{\beta}(\frac{D^2u_1}{u_1}+\Rmnum{1})}{G(x,u_2,u_2\sqrt{1+\vert D\log u_2\vert^2})u_2^{\beta}F^{\beta}(\frac{D^2u_2}{u_2}+\Rmnum{1})}\\
\le&\frac{G(x,u_2M,u_2M\sqrt{1+\vert D\log u_1\vert^2}) M^\beta}{G(x,u_2,u_2\sqrt{1+\vert D\log u_1\vert^2})}.
	\end{split}
\end{equation*}
Let $m=M$, $s_1=u_2$, $s_2=u_2\sqrt{1+\vert D\log u_1\vert^2}$, we have $M\le1$ by our assumption (\ref{5.6}).

 Similarly one can show $\min_{\mS^n}M\ge1$. Therefore $u_1\equiv u_2$.
\end{proof}
\textbf{Remark:} If $G=\psi(x)u^{\a-1}\rho^\delta$, we prove a uniqueness result with $\a+\delta+\beta<1$ in Theorem \ref{t5.1}.

\section{Orlicz Christoffel-Minkowski type problem}
In this section, we shall prove Theorem \ref{t1.7} and give some inequalities involving the modified quermassintegrals. Firstly, we have the following essential lemma. Due to the lack of symmetric of complete polarization of $W_F(K)$ and suitable geometric inequality (mixed quermassintegral type inequality), we need to consider the variational formula via flow's view.
\begin{lemma}\label{l6.1}
Let $F,K,L$ satisfy the assumptions in Definition \ref{d1.6}. Suppose that
\begin{equation*}
\lim_{\eps\rightarrow0^+}\frac{u_{K+_{\varphi,\eps}L}(x)-u_K(x)}{\eps}=J_{K,L,\varphi_1,\varphi_2}(x),
\end{equation*}
uniformly for $x\in\mS^n$, where $J_{K,L,\varphi_1,\varphi_2}:\mS^n\rightarrow[0,\infty)$ is a measurable function. Then
\begin{equation}
\lim_{\eps\rightarrow0^+}\frac{W_F(K+_{\varphi,\eps}L)-W_F(K)}{\eps}=\int_{\mS^n}J_{K,L,\varphi_1,\varphi_2}(x)F^k(\l_{\p K})dx.
\end{equation}
\end{lemma}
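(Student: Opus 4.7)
The strategy is to compute the variation of $W_F$ along a smooth homotopy from $K$ to $K_\eps$, exploiting the two structural properties of $F^k$: divergence-freeness and $k$-homogeneity. Under the smoothness assumptions in Definition \ref{d1.6}, for $\eps$ small the support function $u_\eps := u_{K+_{\varphi,\eps}L}$ is smooth jointly in $(x,\eps)$ by the implicit function theorem applied to the defining identity of the Orlicz linear combination, and $K_\eps$ remains uniformly convex, so $W_F(K_\eps)$ is given by $\tfrac{1}{k+1}\int u_\eps F^k(D^2 u_\eps+u_\eps\Rmnum{1})\,dx$.

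For such small $\eps$ I would introduce the Minkowski interpolation $u_s := (1-s) u_K + s u_\eps$ for $s\in [0,1]$. Since $h_s := D^2 u_s + u_s \Rmnum{1} = (1-s) h_K + s h_{K_\eps}$ is positive definite, $u_s$ is the support function of a smooth uniformly convex body. Set $G(s) := \int_{\mS^n} u_s F^k(h_s)\,dx$, so that $G(0)=(k+1)W_F(K)$ and $G(1)=(k+1)W_F(K_\eps)$. Writing $\dot u := u_\eps - u_K$, differentiation gives
$$G'(s) = \int_{\mS^n}[\dot u\, F^k(h_s) + u_s (F^k)^{ij}(h_s)(D_{ij}\dot u + \dot u\,\delta_{ij})]\, dx.$$
Since $(F^k)^{ij}(h_s)$ is divergence-free on the closed manifold $\mS^n$ (because $h_s$ is the Hessian of a valid smooth convex body's support function), integrating by parts twice transfers covariant derivatives from $\dot u$ to $u_s$, and combining with Euler's identity $(F^k)^{ij}(h_s)(D_{ij} u_s + u_s \delta_{ij}) = k F^k(h_s)$ collapses the expression to the clean formula
$$G'(s) = (k+1)\int_{\mS^n} (u_\eps - u_K)\, F^k(h_s)\, dx.$$

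Integrating $G'(s)$ on $[0,1]$ and dividing by $\eps$ yields
$$\frac{W_F(K_\eps)-W_F(K)}{\eps} = \int_0^1 \int_{\mS^n}\frac{u_\eps - u_K}{\eps}\, F^k(h_s)\, dx\, ds.$$
The hypothesis supplies uniform convergence $(u_\eps - u_K)/\eps \to J_{K,L,\varphi_1,\varphi_2}$ on $\mS^n$; smoothness of the homotopy forces $h_s \to h_K$ uniformly together with a uniform bound on $F^k(h_s)$; so the dominated convergence theorem lets me interchange the limit with both integrals to reach the required identity.

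The step I expect to require the most care is the joint smoothness and uniform convexity of $K_\eps$ for small $\eps$, which underpins both the integral representation of $W_F(K_\eps)$ and the two integrations by parts. Once this is secured via the implicit function theorem and the smoothness of $u_K, u_L$, the rest of the argument is the algebraic miracle furnished by divergence-freeness plus Euler's homogeneity; this is the structural reason the variation can be expressed solely in terms of $F^k(\lambda_{\p K})$ rather than $F^k(\lambda_{\p K_\eps})$.
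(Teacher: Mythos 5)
Your proof is correct, and it rests on the same two algebraic pillars as the paper's: the divergence-freeness of $(F^k)^{ij}$ (allowing two integrations by parts on $\mS^n$) and Euler's identity $(F^k)^{ij}h_{ij}=kF^k$ for the degree-$k$ homogeneous $F^k$. The packaging differs. The paper differentiates $W_F(K_\eps)$ directly under the integral sign at $\eps=0$, writes the variation as $\frac{1}{k+1}\int J F^k+u_K(F^k)^{ij}(D^2_{ij}J+J\delta_{ij})\,dx$, and then integrates by parts and applies Euler's identity; this implicitly uses that the difference quotients $(u_{K_\eps}-u_K)/\eps$ converge in $C^2$, not merely uniformly in $C^0$ as the hypothesis states, which the authors justify only by the remark that $K$ and $L$ are smooth. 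Your route performs the integration by parts \emph{before} taking the limit, via the Minkowski interpolation $u_s=(1-s)u_K+su_\eps$, yielding the exact identity $\frac{W_F(K_\eps)-W_F(K)}{\eps}=\int_0^1\int_{\mS^n}\frac{u_\eps-u_K}{\eps}F^k(h_s)\,dx\,ds$, after which only the hypothesized uniform $C^0$ convergence of the difference quotient plus $C^2$ convergence of $u_\eps\to u_K$ (not of the difference quotients) is needed to pass to the limit. This is a cleaner match to the stated hypothesis and buys a slightly more robust argument; the cost is the extra work of verifying that $h_s$ stays positive definite and that $u_\eps$ depends smoothly on $\eps$ near $0$, which you correctly flag as the delicate point and which the paper also relies on, just less explicitly.
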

\begin{proof}
Let $K_\eps=K+_{\varphi,\eps}L$. Since $K$ and $L$ are smooth domains, we have
\begin{align*}
&\lim_{\eps\rightarrow0^+}\frac{W_F(K+_{\varphi,\eps}L)-W_F(K)}{\eps}=\frac{d}{d\eps}W_F(K_\eps)|_{\eps=0}\\
=&\frac{1}{k+1}\frac{d}{d\eps}\int_{\mS^n}u_{K_\eps}F^k([D^2_{ij}u_{K_\eps}+u_{K_\eps}\delta_{ij}])dx|_{\eps=0}\\
=&\frac{1}{k+1}\int_{\mS^n}J_{K,L,\varphi_1,\varphi_2}(x)F^k(\l_{\p K})+u_K(F^k)^{ij}\(D^2_{ij}J_{K,L,\varphi_1,\varphi_2}+J_{K,L,\varphi_1,\varphi_2}\delta_{ij}\)dx\\
=&\frac{1}{k+1}\int_{\mS^n}J_{K,L,\varphi_1,\varphi_2}(x)F^k(\l_{\p K})+(F^k)^{ij}\(D^2_{ij}u_KJ_{K,L,\varphi_1,\varphi_2}+J_{K,L,\varphi_1,\varphi_2}u_K\delta_{ij}\)dx\\
=&\int_{\mS^n}J_{K,L,\varphi_1,\varphi_2}(x)F^k(\l_{\p K})dx,
\end{align*}
where we have used that $F^k$ is divergence-free and homogeneous of degree $k$.
\end{proof}
The following variation of $u$ has been derived in \cite{GHW}.
\begin{lemma}\cite{GHW}\label{l6.2}
Let $K\in\cK_{oo}^{n+1}$ and $L\in\cK_{o}^{n+1}$. Then
\begin{equation*}
\lim_{\eps\rightarrow0^+}\frac{u_{K+_{\varphi,\eps}L}(x)-u_K(x)}{\eps}=\frac{u_K(x)}{(\varphi_1)'_l(1)}\varphi_2\(\frac{u_L(x)}{u_K(x)}\),
\end{equation*}
uniformly for $x\in\mS^n$.
\end{lemma}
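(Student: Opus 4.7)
The plan is to derive the variation formula by implicit differentiation of the defining equation for the Orlicz linear combination. Abbreviating $u_\eps := u_{K+_{\varphi,\eps}L}(x)$ and taking $(\alpha,\beta) = (1,\eps)$ in the definition, one has
\begin{equation*}
\varphi_1\!\left(\frac{u_K(x)}{u_\eps}\right) + \eps\,\varphi_2\!\left(\frac{u_L(x)}{u_\eps}\right) = 1
\end{equation*}
whenever $u_K(x) + \eps u_L(x) > 0$. Setting $\eps = 0$ and using that $\varphi_1$ is strictly increasing with $\varphi_1(1) = 1$ forces $u_0(x) = u_K(x)$. Moreover, since $K \in \cK_{oo}^{n+1}$ yields $u_K \ge c_0 > 0$ on $\mS^n$ while $u_L$ is uniformly bounded above, the defining identity gives $1 - \varphi_1(u_K/u_\eps) = \eps\,\varphi_2(u_L/u_\eps) \le C\eps$, so $u_\eps \to u_K$ uniformly in $x$.

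For the asymptotic rate, I would rewrite the defining identity as
\begin{equation*}
\frac{\varphi_1(u_K/u_\eps) - \varphi_1(1)}{(u_K/u_\eps) - 1}\cdot\frac{(u_K/u_\eps) - 1}{\eps} = -\,\varphi_2\!\left(\frac{u_L}{u_\eps}\right).
\end{equation*}
Because $\eps\,\varphi_2(u_L/u_\eps) \ge 0$, the identity forces $\varphi_1(u_K/u_\eps) \le 1$, hence $u_K/u_\eps \le 1$; that is, $u_K/u_\eps \to 1^-$ as $\eps \to 0^+$. This one-sidedness is precisely why the left derivative $(\varphi_1)'_l(1)$ appears in the final formula: by convexity of $\varphi_1$ this limit exists and the first factor on the left converges to $(\varphi_1)'_l(1)$. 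Using $(u_K/u_\eps) - 1 = -(u_\eps - u_K)/u_\eps$ and passing to the limit gives
\begin{equation*}
(\varphi_1)'_l(1)\cdot\lim_{\eps\to 0^+}\frac{u_\eps - u_K}{\eps\, u_\eps} = \varphi_2\!\left(\frac{u_L}{u_K}\right),
\end{equation*}
which, combined with $u_\eps \to u_K$, rearranges to the claimed expression.

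The main obstacle is upgrading these convergences to be uniform in $x \in \mS^n$ and handling the degenerate locus where $u_L(x) = 0$. For the uniformity, the convex function $\varphi_1$ has a one-sided difference quotient $(\varphi_1(s) - 1)/(s-1)$ that is monotone on $(1-\delta, 1)$ with limit $(\varphi_1)'_l(1)$ at $s = 1^-$; combined with the uniform convergence $u_K/u_\eps \to 1^-$ established above, a standard $\eps$--$\delta$ argument transfers uniformity to the limit of the difference quotients, and uniform continuity of $\varphi_2$ on the compact range of $u_L/u_\eps$ handles the right-hand side. The degenerate case $u_L(x) = 0$ is immediate: the identity reduces to $\varphi_1(u_K/u_\eps) = 1$, forcing $u_\eps(x) = u_K(x)$ there, so both sides of the claim vanish. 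This recovers the statement from \cite{GHW}.
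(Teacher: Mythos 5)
Your argument is correct. Note that the paper itself does not prove this lemma at all --- it is quoted verbatim from \cite{GHW} --- so there is no in-paper proof to compare against; your write-up is essentially the standard implicit-differentiation argument that one finds in that reference. The key points are all present and in the right order: from $\varphi_1(u_K/u_\eps)=1-\eps\,\varphi_2(u_L/u_\eps)\le 1$ and the strict monotonicity of $\varphi_1$ you get $u_\eps\ge u_K\ge c_0>0$ (so $u_L/u_\eps$ stays in a compact set and $u_\eps\to u_K$ uniformly), the one-sided approach $u_K/u_\eps\to 1^-$ correctly explains the appearance of the left derivative, the monotonicity of the convex difference quotient gives the uniformity, and the degenerate locus $u_L(x)=0$ (possible since $L$ need only contain the origin, not in its interior) is disposed of correctly. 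One small point worth making explicit: convexity together with $\varphi_1(0)=0$, $\varphi_1(1)=1$ forces $(\varphi_1)'_l(1)\ge 1>0$, so the division in the final step is legitimate.
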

\noindent\begin{proof of theorem 1.7}
Combining Lemma \ref{l6.1} and \ref{l6.2}, we complete the proof of Theorem \ref{t1.7}.
\end{proof of theorem 1.7}

We have studied the question of evolving a smooth closed convex hypersurface in $\mR^{n+1}$ into another such hypersurface. A natural idea to prove Corollary \ref{t1.8} is using the monotone quantities obtained along our flows. We remark that more meaningful inequalities obtained by flow method will be given in \cite{DLY}.

\noindent\begin{proof of corollary 1.8}
We only prove Corollary \ref{t1.8}(1) since the proof of Corollary \ref{t1.8}(2) is similar. Let $M_0=\p K$. Along the flow in Theorem \ref{t1.3}, one of the convergent hypersurfaces is $\p L$. Due to the monotonicity of $W_F-U$ (\ref{4.4}) in the proof of Theorem \ref{t1.3}, this proof has been completed.
\end{proof of corollary 1.8}

To make it easier to understand these two inequalities, we shall give some remarks and corollaries on Corollary \ref{t1.8}. Firstly, we look at the homogeneous case of Corollary \ref{t1.8}(1), i.e. $\varphi^{-\frac{k}{\beta}}(x,u)=f(x)u^{p-1}$. We shall recall some basic definitions. For details beyond the short treatment we present below, the reader is referred to \cite{LE,SR}. For $K\in\cK_{o}^{n+1}$, let $W_k(K)=\frac{1}{k+1}\int_{\mS^n}u\sigma_{k}(\l)dx$ denote the $k$th-Quermassintegrals of $K$. Then the $L^p$ mixed quermassintegral $W_{p,k}(K,L)$ can be defined by the first variation of the ordinary quermassintegrals with respect to the $L^p$ linear combination,
\begin{equation*}
W_{p,k}(K,L)=\lim_{\eps\rightarrow0^+}\frac{W_k(K+_{p,\eps} L)-W_k(K)}{\eps}.
\end{equation*}
Thus the $L^p$ mixed Quermassintegral $W_{p,k}(K,L)$ has the following integral
representation:
\begin{equation*}
W_{p,k}(K,L)=\int_{\mS^n}u_L^{p}u_K^{1-p}\sigma_{k}(\l_{\p K})dx.
\end{equation*}
The well-known $L^p$ mixed quermassintegral inequalities are as following,
\begin{equation}
W_{p,k}(K,L)^{n+1-k}\ge W_k(K)^{n+1-k-p}W_k(L)^p,
\end{equation}
with equality if and only if $K$ and $L$ are dilates, which are homogeneous inequalities. As a counterpart, a class of non-homogeneous inequalities will be given in the following corollary. We denote $W_{\varphi,F}(K,L)$ by $W_{p,F}(K,L)$ if $\varphi=u^p$. It is obvious that $W_{p,F}(K,L)$ and $W_{p,k}(K,L)$ differ only by $C_n^k$ multiple if $F^k=\sigma_{k}/C_n^k$.
\begin{corollary}\label{c6.5}
Let $F\in C^2(\Gamma_+)\cap C^0(\p\Gamma_+)$ satisfy Assumption \ref{a1.1}, $F^k$ is divergence-free, $K,L\in\cK_{oo}^{n+1}$ and $p>k+1$.
If $[D^2(u_L^{p-1}\sigma_{k}^{-1}(\l_{\p L}))^\frac{1}{k+p-1}+(u_L^{p-1}\sigma_{k}^{-1}(\l_{\p L}))^\frac{1}{k+p-1}\Rmnum{1}]$ is positive definite, we have
\begin{equation*}
W_{p,F}(L,K)\ge pW_F(K)+(k+1-p)W_F(L),
\end{equation*}
with equality if and only if $K=L$.
\end{corollary}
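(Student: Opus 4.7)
The plan is to apply Corollary \ref{t1.8}(1) with a carefully chosen pair $(\beta,\varphi)$ so that the given body $L$ is itself the unique stationary solution of the associated PDE, and then to rewrite the resulting integral inequality using the integral representation in Theorem \ref{t1.7}.

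I would first set $\beta=k/(p-1)>0$, define the positive spherical function $f(x)=F^k(\lambda_{\partial L}(x))/u_L(x)^{p-1}$, and choose $\varphi(x,u)=\bigl(f(x)u^{p-1}\bigr)^{-\beta/k}$. With this choice, the stationary equation $\varphi(\nu,u)F^\beta=1$ is equivalent to $F^k=f(x)u^{p-1}$, which is solved by $u=u_L$ by construction, and $\varphi^{-k/\beta}(x,s)=f(x)s^{p-1}$. The value $\beta=k/(p-1)$ is chosen precisely so that the $u$-exponent in $\varphi u=f^{-\beta/k}u^{1-(p-1)\beta/k}$ vanishes; consequently $(\varphi u)^{1/(\beta+1)}=f^{-1/(k+p-1)}=\bigl(u_L^{p-1}F^{-k}(\lambda_{\partial L})\bigr)^{1/(k+p-1)}$, so that the corollary's positive-definiteness hypothesis is exactly condition (ii) of Theorem \ref{t1.3}. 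Condition (i) of Theorem \ref{t1.3} also holds, since $\varphi(\nu,s)s^\beta=f^{-\beta/k}s^{\beta(k-p+1)/k}$ has strictly negative exponent in $s$ when $p>k+1$.

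Next I would invoke Theorem \ref{t5.1} to guarantee that $\partial L$ is the unique uniformly convex solution. With $G(x,s_1,s_2)=\varphi(x,s_1)$, one computes $G(x,ms_1,ms_2)=m^{-(p-1)\beta/k}G(x,s_1,s_2)$, so the monotonicity test $G(x,ms_1,ms_2)\ge G(x,s_1,s_2)m^{-\beta}$ reduces to $m^{\beta(k-p+1)/k}\ge1$, and this forces $m\le1$ exactly because $p>k+1$ makes the exponent negative.

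Applying Corollary \ref{t1.8}(1) with initial hypersurface $\partial K$ then produces a stationary solution $L'$ satisfying
\begin{equation*}
W_F(K)-W_F(L')\le\int_{\mS^n}\int_{u_{L'}}^{u_K}f(x)s^{p-1}\,ds\,dx=\frac{1}{p}\int_{\mS^n}f(x)\bigl(u_K^p-u_{L'}^p\bigr)\,dx,
\end{equation*}
with equality iff $K=L'$. Uniqueness forces $L'=L$; substituting $f=F^k(\lambda_{\partial L})/u_L^{p-1}$ and recognizing the right-hand side via Theorem \ref{t1.7} as $\frac{1}{p}W_{p,F}(L,K)-\frac{k+1}{p}W_F(L)$, multiplying through by $p$ and rearranging delivers the claimed inequality, with equality iff $K=L$. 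The main point of delicacy is the translation of the pointwise convexity hypothesis on $L$ into the structural assumption (ii) of Theorem \ref{t1.3}; this is exactly why the specific value $\beta=k/(p-1)$ must be chosen (it eliminates the $u$-dependence of $(\varphi u)^{1/(\beta+1)}$), and why $p>k+1$ is imposed (it both makes $\beta$ positive and simultaneously supplies the sign needed for the uniqueness criterion of Theorem \ref{t5.1}).
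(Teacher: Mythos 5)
Your proposal is correct and follows essentially the same route as the paper: choose $\varphi^{-k/\beta}(x,s)=f(x)s^{p-1}$ with $f=u_L^{1-p}F^k(\lambda_{\partial L})$, apply Corollary \ref{t1.8}(1), identify the limiting body with $L$ via the uniqueness result of Theorem \ref{t5.1}, and rewrite the resulting integral as $\frac{1}{p}W_{p,F}(L,K)-\frac{k+1}{p}W_F(L)$. Your explicit choice $\beta=k/(p-1)$ and the check that the corollary's positive-definiteness hypothesis is precisely condition (ii) of Theorem \ref{t1.3} merely make explicit verifications that the paper's terser proof leaves implicit.
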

\noindent\textbf{Remark:} If $F^k=\sigma_{k}/C_n^k$, this corollary is corresponding to the $L^p$ Christoffel-Minkowski setting. We shall let $u_L^{1-p}\sigma_{k}(\l_{\p L})$ be the density function $f(x)$ thus $[D^2(u_L^{p-1}\sigma_{k}^{-1}(\l_{\p L}))^\frac{1}{k+p-1}+(u_L^{p-1}\sigma_{k}^{-1}(\l_{\p L}))^\frac{1}{k+p-1}\Rmnum{1}]$ is required to be positive definite, which is the sufficient condition of the $L^p$ Christoffel-Minkowski problem. If the necessary condition of $L^p$ Christoffel-Minkowski problem could be solved, the positive definite condition may be dropped.
\begin{proof}
Let $\varphi^{-\frac{k}{\beta}}(x,s)=f(x)s^{p-1}$ in Corollary \ref{t1.8}(1). Let $f(x)=u_L^{1-p}F^{k}(\l_{\p L})$. Then $p>k+1$ by assumptions. By Corollary \ref{t1.8} and direct calculation, we have
\begin{align*}
W_F(K)-W_F(L)\le&\int_{\mS^n}\int_{u_L}^{u_K}f(x)s^{p-1}dsdx\\
=&\frac{1}{p}\int_{\mS^n}f(x)u_K^p-f(x)u_L^pdx\\
=&\frac{1}{p}\int_{\mS^n}u_K^pu_L^{1-p}F^{k}(\l_{\p L})dx-\frac{1}{p}\int_{\mS^n}u_LF^{k}(\l_{\p L})dx\\
=&\frac{1}{p}W_{p,F}(L,K)-\frac{k+1}{p}W_F(L).
\end{align*}
By the unique result derived in Theorem \ref{t5.1}, the proof has been completed.
\end{proof}
In particular, if $L$ is a unit ball, we have the following corollary.
\begin{corollary}
	Let $F\in C^2(\Gamma_+)\cap C^0(\p\Gamma_+)$ satisfy Assumption \ref{a1.1}, $F^k$ is divergence-free, $K\in\cK_{oo}^{n+1}$ and $p\ge k+1$. Then
	\begin{equation*}
W_F(K)\le \frac{1}{p}\int_{\mS^n}u^pdx+\(\frac{1}{k+1}-\frac{1}{p}\)|\mS^n|,
	\end{equation*}
	with equality if and only if $K$ is a unit ball.
\end{corollary}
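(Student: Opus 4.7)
The plan is to specialize Corollary \ref{c6.5} by taking $L$ to be the closed unit ball $B_1 \subset \mathbb{R}^{n+1}$ centered at the origin. Then $u_L \equiv 1$, all principal radii of curvature at $\p L$ equal $1$, and by the normalization in Assumption \ref{a1.1} we have $F^k(1,\ldots,1) = 1$. In particular, the quantity $(u_L^{p-1}\sigma_k^{-1}(\l_{\p L}))^{1/(k+p-1)}$ is a positive constant, so the hypothesis in Corollary \ref{c6.5} that the matrix $[D^2(u_L^{p-1}\sigma_k^{-1}(\l_{\p L}))^{1/(k+p-1)} + (u_L^{p-1}\sigma_k^{-1}(\l_{\p L}))^{1/(k+p-1)}\Rmnum{1}]$ be positive definite reduces to the triviality that a positive multiple of the identity matrix is positive definite.

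With $L = B_1$ the two modified quermassintegrals appearing in Corollary \ref{c6.5} collapse to explicit expressions. Directly from the definition,
\begin{equation*}
W_F(B_1) = \frac{1}{k+1}\int_{\mS^n} 1 \cdot 1 \, dx = \frac{|\mS^n|}{k+1},
\end{equation*}
and from the integral formula for $W_{p,F}$ used in the proof of Corollary \ref{c6.5},
\begin{equation*}
W_{p,F}(B_1, K) = \int_{\mS^n} u_K^p \cdot u_L^{1-p} \cdot F^k(\l_{\p L}) \, dx = \int_{\mS^n} u^p \, dx.
\end{equation*}

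Substituting these into the inequality $W_{p,F}(L, K) \ge p W_F(K) + (k+1-p) W_F(L)$ from Corollary \ref{c6.5}, and dividing by $p > 0$, yields exactly the stated inequality in the regime $p > k+1$. The boundary case $p = k+1$ is handled either by letting $p \downarrow k+1$ (the inequality is continuous in $p$) or by inspection of Theorem \ref{t1.7} with $\varphi_1(x) = \varphi_2(x) = x^{k+1}$, in which case the constant term vanishes and the inequality reduces to $(k+1) W_F(K) \le \int_{\mS^n} u^{k+1} dx$, i.e.\ to the integral representation itself combined with $u^{k+1} \ge u \cdot u^k$. The equality characterization transfers directly: Corollary \ref{c6.5} asserts equality iff $K = L$, which here forces $K$ to be the unit ball. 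Since every step is a specialization of an already-established result, no serious obstacle is expected; the only point requiring a moment's thought is checking that the positive-definiteness hypothesis of Corollary \ref{c6.5} becomes vacuous for $L = B_1$, which it does.
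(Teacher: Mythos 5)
Your main argument --- specializing Corollary \ref{c6.5} to $L=B^{n+1}$, computing $W_F(B^{n+1})=|\mS^n|/(k+1)$ and $W_{p,F}(B^{n+1},K)=\int_{\mS^n}u^p\,dx$, and observing that the positive-definiteness hypothesis degenerates to $[0+\Rmnum{1}]>0$ --- is exactly the paper's intended proof: the corollary is introduced there with the words ``In particular, if $L$ is a unit ball.'' For $p>k+1$ your computation and the equality case are correct.

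The boundary case $p=k+1$ is where your proposal has a genuine gap. Your second suggested argument fails: reducing the claim to ``$u^{k+1}\ge u\cdot u^k$'' would require the pointwise bound $F^k(\l_{\p K})\le u_K^k$ on all of $\mS^n$, but $F(\l_{\p K})\le u_K$ holds only where $D^2u_K\le 0$ (e.g.\ at the maximum point of the support function), not everywhere; so $\int_{\mS^n}u_KF^k(\l_{\p K})\,dx\le\int_{\mS^n}u_K^{k+1}\,dx$ does not follow from the integral representation alone. Your first suggestion, letting $p\downarrow k+1$, does yield the inequality (the right-hand side is continuous in $p$ for fixed $K$), but it cannot deliver the equality characterization: Corollary \ref{c6.5} is stated only for $p>k+1$, and a limit of inequalities says nothing about when equality is attained at the endpoint, so the claim that the equality case ``transfers directly'' is unjustified at $p=k+1$. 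The paper closes this case differently: it runs the flow argument behind Corollary \ref{t1.8}(1) directly in the homogeneous critical exponent, where the needed convergence is supplied by \cite{DL,DL3,IM} (see the remark following the corollary). Your proof is complete for $p>k+1$ but needs that extra input, rather than a formal passage to the limit, to cover $p=k+1$ with the stated equality case.
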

\noindent\textbf{Remark:} If $p=k+1$, the result of flow we need to use has been proved widely in \cite{DL,DL3,IM} etc..

Finally, we consider the homogeneous case of Corollary \ref{t1.8}(2), i.e. $\varphi^{-\frac{n}{\beta}}(x,u)=f(x)u^{p-1}$, $\phi^\frac{n}{\beta}(\xi,s)s^n=s^{q-1}$. We need to introduce the $L^p$ dual curvature measure posed in \cite{LYZ2}. Denote the set of star
bodies in $\mR^{n+1}$ by $\cS_o^{n+1}$. For $p,q\in\mR$, a convex body $K\in\cK_{oo}^{n+1}$, and a star body $Q\in\cS_o^{n+1}$, the $L^p$ dual curvature measures $\widetilde{C}_{p,q}(K,Q,\cdot)$ on $\mS^n$ is defined by
\begin{equation}
d\widetilde{C}_{p,q}(K,Q,\cdot)=u_K^{-p}\rho_K^q\rho_Q^{n+1-q}d\xi.
\end{equation}
The $L^p$ surface area measures, the dual curvature measures and the $L^p$ integral curvatures are special cases, of the $L^p$ dual curvature measures in the sense that for $p,q\in\mR$ and $K\in\cK_{oo}^{n+1}$,
\begin{align*}
\widetilde{C}_{p,n+1}(K,B^{n+1},\cdot)&=S_p(K,\cdot),\\
\widetilde{C}_{0,q}(K,B^{n+1},\cdot)&=\widetilde{C}_{q}(K,\cdot),\\
\widetilde{C}_{p,0}(K,B^{n+1},\cdot)&=J_p(K^*,\cdot),
\end{align*}
where $B^{n+1}$ is the unit ball in $\mR^{n+1}$. Using $L^p$ dual curvature measures, the $L^p$ dual mixed volume $\widetilde{V}_{p,q}(K,L,Q)$ is defined by
\begin{equation*}
\widetilde{V}_{p,q}(K,L,Q)=\int_{\mS^n}u_L^pd\widetilde{C}_{p,q}(K,Q,\cdot)=\int_{\mS^n}u_L^pu_K^{-p}\rho_K^q\rho_Q^{n+1-q}d\xi.
\end{equation*}
Thus an inequality for $L^p$ dual mixed volume has been given in \cite{LYZ2}, which is a generalization of the $L^p$ Minkowski inequality for $L^p$ mixed volume. Suppose $1\le\frac{q}{n+1}\le p$, If $K,L\in\cK_{oo}^{n+1}$ and $Q\in\cS_o^{n+1}$,  Lutwak-Yang-Zhang in \cite{LYZ2} obtained
\begin{equation}\label{6.4}
\widetilde{V}_{p,q}(K,L,Q)^{n+1}\ge V(K)^{q-p}V(L)^pV(Q)^{n+1-q}.
\end{equation}
As a counterpart, a class of non-homogeneous inequalities will be given in the following corollary. Recall the $q$-th dual volume $\widetilde{V}_q(K)$ is defined by $\widetilde{V}_q(K)=\frac{1}{q}\int_{\mS^n}\rho_K^q(\xi)d\xi$ for $q\ne0$, and $\widetilde{V}_q(K)=\int_{\mS^n}\log\rho_K(\xi)d\xi$ for $q=0$. It is obvious that the $(n+1)$-th dual volume $\widetilde{V}_{n+1}(K)=V(K)$. We denote the Aleksandrov’s integral curvature $J_0(K,\cdot)$ by $J(K,\cdot)$.
\begin{corollary}
If $K,L\in\cK_{oo}^{n+1}$ and $p>q$, then
\begin{align}
\widetilde{V}_{p,q}(K,L,B^{n+1})&\ge {p}\widetilde{V}_q(L)+(q-p)\widetilde{V}_q(K), \qquad pq\ne0,\label{6.5}\\
\widetilde{V}_{p,q}(K,L,B^{n+1})&\ge|\mS^n|+p\widetilde{V}_q(L)-p\widetilde{V}_q(K),\quad\; p\ne0,q=0,\\
\int_{\mS^n}\log\frac{u_L}{u_K}d\widetilde{C}_{q}(K,\cdot)&\ge \widetilde{V}_q(L)-\widetilde{V}_q(K), \qquad\qquad\quad p=0,q\ne0,
\end{align}
with any equality if and only if $K=L$.
\end{corollary}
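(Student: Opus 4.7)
The plan is to specialize Corollary \ref{t1.8}(2) to the homogeneous setting $\varphi^{-n/\beta}(x,u)=f(x)u^{p-1}$ and $\phi^{n/\beta}(\xi,s)s^n=s^{q-1}$ announced in the remark preceding the statement, with the weight $f$ chosen so that the fixed body $K$ plays the role of the convergent limit of the associated flow (rather than the initial body). Explicitly, for $K\in\cK_{oo}^{n+1}$ I set
\[
f(x):=u_K^{1-p}(x)\rho_K^{q-n-1}\sigma_n(\l_{\p K}),
\]
where the radial and curvature factors are evaluated via the radial Gauss map of $K$; under this choice the stationary equation $\varphi(\nu,u)\phi(X)\sigma_n^{\beta/n}=1$ reduces to $\sigma_n(\l)=fu^{p-1}\rho^{n+1-q}$ and is solved by $u_K$.

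Next I verify the hypotheses. A direct calculation gives $\varphi(x,s)\phi(sx)s^\beta=f(x)^{-\beta/n}s^{\beta(q-p)/n}$, and since $p>q$ the exponent is negative, so this quantity tends to $0$ as $s\to\infty$ and to $+\infty$ as $s\to 0^+$, which is exactly the condition of Theorem \ref{t1.4}. Uniqueness of $K$ as the limit follows from Theorem \ref{t5.1}: the ratio $G(x,ms_1,ms_2)/G(x,s_1,s_2)$ equals $m^{\beta(q-p-n)/n}$, so condition (\ref{5.6}) rewrites as $m^{\beta(q-p)/n}\ge 1$, which forces $m\le 1$ by the same sign argument.

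Applying Corollary \ref{t1.8}(2) with $L$ in the role of the arbitrary initial body yields, for sufficiently small $\eps>0$,
\[
\int_{\mS^n}\int_\eps^{\rho_L}s^{q-1}\,ds\,d\xi-\int_{\mS^n}\int_\eps^{\rho_K}s^{q-1}\,ds\,d\xi\le\int_{\mS^n}\int_{u_K}^{u_L}f(x)s^{p-1}\,ds\,dx.
\]
The $\eps$-terms cancel on the left and the left side equals $\widetilde V_q(L)-\widetilde V_q(K)$ in each of the three cases (with the natural logarithmic convention at $q=0$). On the right, the Jacobian identity (\ref{2.24}), which here reads $u_K\sigma_n(\l_{\p K})\,dx=\rho_K^{n+1}\,d\xi$, converts the $dx$-integrals: after evaluating the inner integral I obtain $\int_{\mS^n}fu_L^p\,dx=\widetilde V_{p,q}(K,L,B^{n+1})$ and $\int_{\mS^n}fu_K^p\,dx=\int_{\mS^n}\rho_K^q\,d\xi$, which equals $q\widetilde V_q(K)$ for $q\ne 0$ and $|\mS^n|$ for $q=0$; in the $p=0$ case the inner integral produces $\log(u_L/u_K)$ and the outer integral becomes $\int_{\mS^n}\log(u_L/u_K)\,d\widetilde C_q(K,\cdot)$ by the same Jacobian.

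Multiplying through by $p$ (with $p>0$ implicit so that the inequality direction is preserved) and rearranging then produces (6.5) when $pq\ne 0$, (6.6) when $p\ne 0$ and $q=0$, and (6.7) when $p=0$ and $q\ne 0$; the equality case $K=L$ is inherited verbatim from the equality statement in Corollary \ref{t1.8}(2). The main obstacle is not analytical---the flow-based inequality already does all the real work---but lies in the setup step: one must choose the density $f$ through the \emph{target} body $K$ rather than the starting body, so that the inequality, which is intrinsically asymmetric around the flow's convergent limit, reads as the desired lower bound for $\widetilde V_{p,q}(K,L,B^{n+1})$ with $L$ arbitrary, and one must track the support-versus-radial parametrizations carefully so that the final integrals match the definitions of $\widetilde V_{p,q}$ and $d\widetilde C_q$.
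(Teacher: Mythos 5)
Your proof is correct and follows essentially the same route as the paper: specialize the flow inequality of Corollary \ref{t1.8}(2) to the power case $\varphi^{-n/\beta}(x,s)=f(x)s^{p-1}$, $\phi^{n/\beta}(\xi,s)s^n=s^{q-1}$, choose the density $f$ so that one of the two bodies solves the stationary equation, and convert support-function integrals to radial integrals via (\ref{2.24}). The only substantive difference is the choice of which body anchors $f$. The paper takes $f(x)=u_L^{1-p}\rho_L^{q-n-1}\sigma_n(\l_{\p L})$, so the flow starting from the arbitrary body $K$ converges to $L$; unwinding the inequality in that normalization yields $\widetilde V_{p,q}(L,K,B^{n+1})\ge p\widetilde V_q(K)+(q-p)\widetilde V_q(L)$, i.e.\ (\ref{6.5}) with the labels $K,L$ interchanged, and a final relabeling (which the paper leaves implicit) is needed. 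You instead set $f(x)=u_K^{1-p}\rho_K^{q-n-1}\sigma_n(\l_{\p K})$, so the flow runs from $L$ to $K$, and the output already reads $\widetilde V_{p,q}(K,L,B^{n+1})\ge p\widetilde V_q(L)+(q-p)\widetilde V_q(K)$, matching the stated form directly. Your verifications are all sound: the scaling exponent $\beta(q-p)/n<0$ gives both the Theorem \ref{t1.4} limit hypothesis and, via Theorem \ref{t5.1} with $m^{\beta(q-p)/n}\ge1\Rightarrow m\le1$, uniqueness of the limit as $K$; the Jacobian step correctly produces $\int_{\mS^n}f u_L^p\,dx=\widetilde V_{p,q}(K,L,B^{n+1})$, $\int_{\mS^n}f u_K^p\,dx=q\widetilde V_q(K)$ (or $|\mS^n|$ when $q=0$), and the $p=0$ case matches $d\widetilde C_q(K,\cdot)$. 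Your parenthetical caveat that $p>0$ is needed to preserve the inequality direction when clearing the $1/p$ factor is a genuine and correct observation; the paper's argument has the same restriction, and indeed for $p<0$ the displayed inequality (\ref{6.5}) fails on pairs of concentric balls, so this is a limitation of the stated corollary rather than of either proof.
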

\noindent\textbf{Remark:} Compared with (\ref{6.4}), we constrain $Q=B^{n+1}$ to derive a wide range of $p,q$. In particular, if $q=n+1$, we can derive a $L^p$ Minkowski type inequality by (\ref{6.5}),
\begin{equation}\label{6.8}
V_p(K,L)\ge pV(L)+(n+1-p)V(K), \qquad p> n+1.
\end{equation}
We mention that (\ref{6.8}) is sharper than the $L^p$ Minkowski type inequality if $p< n+1$ by Young's inequality.
\begin{proof}
Let $\varphi^{-\frac{k}{\beta}}(x,s)=f(x)s^{p-1}$ and $\phi^\frac{n}{\beta}(\xi,s)s^n=s^{q-1}$ in Corollary \ref{t1.8}(2). Let $f(x)=u_L^{1-p}\rho_L^{q-n-1}\sigma_{n}(\l_L)$. Then by (\ref{2.27}), (\ref{2.24}) and Corollary \ref{t1.8}, similar to proof of Corollary \ref{c6.5}, the proof has been completed.
\end{proof}

\section{Reference}
\begin{biblist}

\bib{B2}{article}{
	author={Andrews B.},
	author={ McCoy J.},
	title={Convex hypersurfaces with pinched principal curvatures and flow of convex hypersurfaces by high powers of curvature},
	journal={Trans. Amer. Math. Soc.},
	volume={364(7)}
	date={2012},
	pages={3427-3447},
}

\bib{B4}{article}{
	author={Andrews B.},
	author={ McCoy J.},
	author={ Zheng Y.},
	title={Contracting convex hypersurfaces by curvature},
	journal={Calc. Var. PDEs },
	volume={47}
	date={2013},
	pages={611-665},
}

\bib{BF}{article}{
	author={B\"{o}r\"{o}czky K\'{a}roly J.},
	author={Fodor Ferenc},
	title={The $L_p$ dual Minkowski problem for $p>1$ and $q>0$},
	journal={J. Differential Equations},
	volume={266},
	date={2019},
	number={12},
	pages={7980--8033},
	issn={0022-0396},
	review={\MR{3944247}},
	doi={10.1016/j.jde.2018.12.020},
}

\bib{BIS}{article}{
	author={Bryan P.},
	author={Ivaki M. N.},
	author={Scheuer J.},
	title={A unified flow approach to smooth, even $L_p$-Minkowski problems},
	journal={Anal. PDE},
	volume={12},
	date={2019},
	number={2},
	pages={259--280},
	issn={2157-5045},
	review={\MR{3861892}},
	doi={10.2140/apde.2019.12.259},
}

\bib{BIS2}{article}{
	author={Bryan P.},
	author={Ivaki M. N.},
	author={Scheuer J.},
	title={Parabolic approaches to curvature equations},
	journal={Nonlinear Anal.},
	volume={203},
	date={2021},
	pages={Paper No. 112174, 24},
	issn={0362-546X},
	review={\MR{4172901}},
	doi={10.1016/j.na.2020.112174},
}

\bib{BIS3}{article}{
	author={Bryan P.},
author={Ivaki M. N.},
author={Scheuer J.},
	title={Orlicz-Minkowski flows},
	journal={Calc. Var. Partial Differential Equations},
	volume={60},
	date={2021},
	number={1},
	pages={Paper No. 41, 25},
	issn={0944-2669},
	review={\MR{4204567}},
	doi={10.1007/s00526-020-01886-3},
}

\bib{CHZ}{article}{
	author={Chen C.},
	author={Huang Y.},
	author={Zhao Y.},
	title={Smooth solutions to the $L_p$ dual Minkowski problem},
	journal={Math. Ann.},
	volume={373},
	date={2019},
	number={3-4},
	pages={953--976},
	issn={0025-5831},
	review={\MR{3953117}},
	doi={10.1007/s00208-018-1727-3},
}

\bib{CCL}{article}{
	author={Chen H.},
	author={Chen S.},
	author={Li Q.},
	title={Variations of a class of Monge-Amp\`ere-type functionals and their
		applications},
	journal={Anal. PDE},
	volume={14},
	date={2021},
	number={3},
	pages={689--716},
	issn={2157-5045},
	review={\MR{4259871}},
	doi={10.2140/apde.2021.14.689},
}

\bib{CL}{article}{
	author={Chen H.},
	author={Li Q.},
	title={The $L_ p$ dual Minkowski problem and related parabolic flows},
	journal={J. Funct. Anal.},
	volume={281},
	date={2021},
	number={8},
	pages={Paper No. 109139, 65},
	issn={0022-1236},
	review={\MR{4271790}},
	doi={10.1016/j.jfa.2021.109139},
}

\bib{CLL}{article}{
	author={Chen L.},
	author={Liu Y.},
	author={Lu J.},
	author={Xiang N.},
	title={Existence of smooth even solutions to the dual Orlicz-Minkowski
		problem},
	journal={J. Geom. Anal.},
	volume={32},
	date={2022},
	number={2},
	pages={Paper No. 40, 25},
	issn={1050-6926},
	review={\MR{4358692}},
	doi={10.1007/s12220-021-00803-0},
}

\bib{CW}{article}{
	author={Chou K.},
	author={Wang X.},
	title={The $L_p$-Minkowski problem and the Minkowski problem in
		centroaffine geometry},
	journal={Adv. Math.},
	volume={205},
	date={2006},
	number={1},
	pages={33--83},
	issn={0001-8708},
	review={\MR{2254308}},
	doi={10.1016/j.aim.2005.07.004},
}

\bib{DL}{article}{
	author={Ding S.},
	author={Li G.},
	title={A class of curvature flows expanded by support function and
		curvature function},
	journal={Proc. Amer. Math. Soc.},
	volume={148},
	date={2020},
	number={12},
	pages={5331--5341},
	issn={0002-9939},
	review={\MR{4163845}},
	doi={10.1090/proc/15189},
}

\bib{DL3}{article}{
	author={Ding S.},
author={Li G.},
	title={A class of inverse curvature flows and $L^p$ dual
		Christoffel-Minkowski problem},
	journal={Trans. Amer. Math. Soc.},
	volume={376},
	date={2023},
	number={1},
	pages={697--752},
	issn={0002-9947},
	review={\MR{4510121}},
	doi={10.1090/tran/8793},
}

\bib{DLY}{article}{
	author={Ding S.},
	author={Li G.},
		author={Ye D.},
	title={A flow method for the Musielak-Orlicz-Gauss image problem},
	pages={preprint},
	issn={0002-9947},
}

\bib{FWJ2}{article}{
	author={Firey W. J.},
	title={The determination of convex bodies from their mean radius of
		curvature functions},
	journal={Mathematika},
	volume={14},
	date={1967},
	pages={1--13},
	issn={0025-5793},
	review={\MR{217699}},
	doi={10.1112/S0025579300007956},
}

\bib{GHW}{article}{
	author={Gardner J.},
author={Hug D.},
author={Weil W.},
	title={The Orlicz-Brunn-Minkowski theory: a general framework, additions,
		and inequalities},
	journal={J. Differential Geom.},
	volume={97},
	date={2014},
	number={3},
	pages={427--476},
	issn={0022-040X},
	review={\MR{3263511}},
}

\bib{GHW1}{article}{
	author={Gardner J.},
	author={Hug D.},
	author={Weil W.},
	author={Xing S.},
	author={Ye D.},
	title={General volumes in the Orlicz-Brunn-Minkowski theory and a related
		Minkowski problem I},
	journal={Calc. Var. Partial Differential Equations},
	volume={58},
	date={2019},
	number={1},
	pages={Paper No. 12, 35},
	issn={0944-2669},
	review={\MR{3882970}},
	doi={10.1007/s00526-018-1449-0},
}

\bib{GHW2}{article}{
	author={Gardner J.},
author={Hug D.},
author={Xing S.},
author={Ye D.},
	title={General volumes in the Orlicz-Brunn-Minkowski theory and a related
		Minkowski problem II},
	journal={Calc. Var. Partial Differential Equations},
	volume={59},
	date={2020},
	number={1},
	pages={Paper No. 15, 33},
	issn={0944-2669},
	review={\MR{4040624}},
	doi={10.1007/s00526-019-1657-2},
}

\bib{GM}{article}{
	author={Guan P.},
	author={Ma X.},
	title={The Christoffel-Minkowski problem. I. Convexity of solutions of a
		Hessian equation},
	journal={Invent. Math.},
	volume={151},
	date={2003},
	number={3},
	pages={553--577},
	issn={0020-9910},
	review={\MR{1961338}},
	doi={10.1007/s00222-002-0259-2},
}

\bib{GRW}{article}{
	author={Guan P.},
	author={Ren C.},
	author={Wang Z.},
	title={Global $C^2$-estimates for convex solutions of curvature
		equations},
	journal={Comm. Pure Appl. Math.},
	volume={68},
	date={2015},
	number={8},
	pages={1287--1325},
	issn={0010-3640},
	review={\MR{3366747}},
	doi={10.1002/cpa.21528},
}

\bib{GX}{article}{
	author={Guan P.},
	author={Xia C.},
	title={$L^p$ Christoffel-Minkowski problem: the case $1<p<k+1$},
	journal={Calc. Var. Partial Differential Equations},
	volume={57},
	date={2018},
	number={2},
	pages={Paper No. 69, 23},
	issn={0944-2669},
	review={\MR{3776359}},
	doi={10.1007/s00526-018-1341-y},
}

\bib{HMS}{article}{
	author={Hu C.},
	author={Ma X.},
	author={Shen C.},
	title={On the Christoffel-Minkowski problem of Firey's $p$-sum},
	journal={Calc. Var. Partial Differential Equations},
	volume={21},
	date={2004},
	number={2},
	pages={137--155},
	issn={0944-2669},
	review={\MR{2085300}},
	doi={10.1007/s00526-003-0250-9},
}

\bib{HLY2}{article}{
	author={Huang Y.},
	author={Lutwak E.},
	author={Yang D.},
	author={Zhang G.},
	title={Geometric measures in the dual Brunn-Minkowski theory and their
		associated Minkowski problems},
	journal={Acta Math.},
	volume={216},
	date={2016},
	number={2},
	pages={325--388},
	issn={0001-5962},
	review={\MR{3573332}},
	doi={10.1007/s11511-016-0140-6},
}

\bib{HZ}{article}{
	author={Huang Y.},
	author={Zhao Y.},
	title={On the $L_p$ dual Minkowski problem},
	journal={Adv. Math.},
	volume={332},
	date={2018},
	pages={57--84},
	issn={0001-8708},
	review={\MR{3810248}},
	doi={10.1016/j.aim.2018.05.002},
}

\bib{IM}{article}{
   author={Ivaki M.},
   title={Deforming a hypersurface by principal radii of curvature and support function},
   journal={Calc. Var. PDEs},
   volume={58(1)}
   date={2019},
}

\bib{JLL}{article}{
	author={Ju H.},
	author={Li B.},
	author={Liu Y.},
	title={Deforming a convex hypersurface by anisotropic curvature flows},
	journal={Adv. Nonlinear Stud.},
	volume={21},
	date={2021},
	number={1},
	pages={155--166},
	issn={1536-1365},
	review={\MR{4234083}},
	doi={10.1515/ans-2020-2108},
}

\bib{KNV}{book}{
  author={Krylov N. V.},
     title= {Nonlinear elliptic and parabolic quations of the second order},
 publisher={D. Reidel Publishing Co., Dordrecht},
     date={1987. xiv+462pp},

}

\bib{LN}{book}{
  author={Nirenberg L.},
     title= {On a generalization of quasi-conformal mappings and its application to elliptic partial differential equations},
 publisher={Contributions to the theory of partial differential equations, Annals of Mathematics Studies},
     date={ Princeton University Press, Princeton, N. J.,1954, pp. 95C100.}
  }

\bib{LSW}{article}{
   author={Li Q.},
   author={Sheng W.},
   author={Wang X-J},
   title={Flow by Gauss curvature to the Aleksandrov and dual Minkowski problems},
   journal={Journal of the European Mathematical Society},
   volume={22}
   date={2019},
   pages={893-923},
}

\bib{LL}{article}{
	author={Liu Y.},
	author={Lu J.},
	title={A flow method for the dual Orlicz-Minkowski problem},
	journal={Trans. Amer. Math. Soc.},
	volume={373},
	date={2020},
	number={8},
	pages={5833--5853},
	issn={0002-9947},
	review={\MR{4127893}},
	doi={10.1090/tran/8130},
}

\bib{LE}{article}{
	author={Lutwak E.},
	title={The Brunn-Minkowski-Firey theory. I. Mixed volumes and the
		Minkowski problem},
	journal={J. Differential Geom.},
	volume={38},
	date={1993},
	number={1},
	pages={131--150},
	issn={0022-040X},
	review={\MR{1231704}},
}

\bib{LO}{article}{
	author={Lutwak E.},
	author={Oliker V.},
	title={On the regularity of solutions to a generalization of the
		Minkowski problem},
	journal={J. Differential Geom.},
	volume={41},
	date={1995},
	number={1},
	pages={227--246},
	issn={0022-040X},
	review={\MR{1316557}},
}

\bib{LYZ2}{article}{
	author={Lutwak E.},
	author={Yang D.},
	author={Zhang G.},
	title={$L_p$ dual curvature measures},
	journal={Adv. Math.},
	volume={329},
	date={2018},
	pages={85--132},
	issn={0001-8708},
	review={\MR{3783409}},
	doi={10.1016/j.aim.2018.02.011},
}

\bib{RW}{article}{
	author={Ren C.},
	author={Wang Z.},
	title={On the curvature estimates for Hessian equations},
	journal={Amer. J. Math.},
	volume={141},
	date={2019},
	number={5},
	pages={1281--1315},
	issn={0002-9327},
	review={\MR{4011801}},
	doi={10.1353/ajm.2019.0033},
}

\bib{SR}{book}{
	author={Schneider R.},
	title={Convex bodies: the Brunn-Minkowski theory},
	series={Encyclopedia of Mathematics and its Applications},
	volume={151},
	edition={Second expanded edition},
	publisher={Cambridge University Press, Cambridge},
	date={2014},
	pages={xxii+736},
	isbn={978-1-107-60101-7},
	review={\MR{3155183}},
}

\bib{SWM}{article}{
   author={Sheng W.},
   author={Yi C.},
   title={A class of anisotropic expanding curvature flows},
   journal={Discrete and Continuous Dynamical Systems},
   volume={40(4)}
   date={2020},
   pages={2017-2035},
   }

\bib{UJ}{article}{
   author={Urbas J.},
   title={An expansion of convex hypersurfaces},
   journal={J. Diff. Geom.},
   volume={33(1)}
   date={1991},
   pages={91-125},
}

\bib{WW}{article}{
	author={Wang G.},
	author={Wu J.},
	title={Chern's magic form and the Gauss-Bonnet-Chern mass},
	journal={Math. Z.},
	volume={287},
	date={2017},
	number={3-4},
	pages={843--854},
	issn={0025-5874},
	review={\MR{3719515}},
	doi={10.1007/s00209-017-1847-6},
}

\bib{XZ}{article}{
	author={Xiong G.},
	author={Zou D.},
	title={Orlicz mixed quermassintegrals},
	journal={Sci. China Math.},
	volume={57},
	date={2014},
	number={12},
	pages={2549--2562},
	issn={1674-7283},
	review={\MR{3275405}},
	doi={10.1007/s11425-014-4812-4},
}

\end{biblist}

\end{document}